\renewcommand{\section}{%
\@startsection{section}{1}%
  \z@{.7\linespacing\@plus\linespacing}{.5\linespacing}%
 {\normalfont\large\bfseries\centering}}
\newtheorem{theorem}{Theorem}[section]
\newtheorem{corollary}[theorem]{Corollary}
\newtheorem{lemma}[theorem]{Lemma}
\newtheorem{proposition}[theorem]{Proposition}
\newtheorem*{theorem*}{Theorem}
\newtheorem*{corollary*}{Corollary}
\newtheorem*{conjecture*}{Conjecture}
\newtheorem*{lemma*}{Lemma}
\newtheorem*{proposition*}{Proposition}
\newtheorem*{problem*}{Problem}
\newtheorem*{axiom*}{Axiom}
\newtheorem*{example*}{Example}
\newtheorem*{exercise*}{Exercise}
\theoremstyle{definition}
\newtheorem{remark}[theorem]{Remark}
\newtheorem*{remark*}{Remark}
\newtheorem{definition}[theorem]{Definition}
\newtheorem*{definition*}{Definition}
\renewcommand{\l}{\left}
\renewcommand{\r}{\right}
\newcommand{\eps}{\varepsilon}
\newcommand{\N}{{\mathbb N}}
\newcommand{\R}{{\mathbb R}}
\newcommand{\C}{{\mathbb C}}
\newcommand{\im}{\operatorname{Im}}
\newcommand{\re}{\operatorname{Re}}
\newcommand{\ds}{\displaystyle}
\newcommand{\x}{\times}
\newcommand{\del}{\partial}
\newcommand{\cleq}{\lesssim}
\newcommand{\wto}{\rightharpoonup}
\newcommand{\til}{\widetilde}
\def\norm[#1]{\left\Vert #1 \right\Vert}
\def\tbra[#1,#2]{\left\langle #1 , #2\right\rangle} 
\def\rbra[#1,#2]{\left( #1 , #2 \right)} 
\def\sbra[#1,#2]{\left[ #1 , #2 \right]} 
\newcommand{\scA}{{\mathscr A}}
\newcommand{\scB}{{\mathscr B}}
\newcommand{\scG}{{\mathscr G}}
\newcommand{\scK}{{\mathscr K}}
\newcommand{\scM}{{\mathscr M}}
\newcommand{\scN}{{\mathscr N}}
\begin{document}

\title[]{Instability of algebraic standing waves for nonlinear Schr\"odinger equations with double power nonlinearities}



\author[N. Fukaya]{Noriyoshi Fukaya}
\address{Department of Mathematics, Tokyo University of Science, Tokyo, 162-8601, Japan}
\email{fukaya@rs.tus.ac.jp}

\author[M. Hayashi]{Masayuki Hayashi}
\address{Research Institute for Mathematical Sciences, Kyoto University, Kyoto 606-8502, Japan}
\curraddr{}
\email{hayashi@kurims.kyoto-u.ac.jp}
\thanks{}

\subjclass[2010]{Primary 35Q55; Secondary 35A15}
\keywords{nonlinear Schr\"{o}dinger equation, standing waves, orbital instability, variational methods}

\date{}

\dedicatory{}


\begin{abstract}
We consider a nonlinear Schr\"odinger equation with double power nonlinearity
\begin{align*}
i\del_t u+\Delta u-|u|^{p-1}u+|u|^{q-1}u=0,\quad (t,x)\in\R\times\R^N,
\end{align*}
where $1<p<q<1+4/(N-2)_+$. Due to the defocusing effect from the lower power order nonlinearity, the equation has algebraically decaying standing waves with zero frequency, which we call \textit{algebraic standing waves}, as well as usual standing waves decaying exponentially with positive frequency. In this paper we study stability properties of two types of standing waves. We prove strong instability for all frequencies when $q\ge 1+4/N$ and instability for small frequencies when $q<1+4/N$, which especially give the first results on stability properties of algebraic standing waves. The instability result for small positive frequency when $q<1+4/N$ not only improves previous results in one-dimensional case but also gives a first result on instability in higher-dimensional case. The key point in our approach is to take advantage of algebraic standing waves.
\end{abstract}

\maketitle

\tableofcontents

\numberwithin{equation}{section} 
\section{Introduction}
\subsection{Setting of the problem}
\label{sec:1.1}
We consider the following nonlinear Schr\"odinger equations:
\begin{align}
\label{eq:NLS}
\tag{NLS}
\l\{
\begin{aligned}
&i\del_t u+\Delta u-a|u|^{p-1}u+b|u|^{q-1}u=0, && (t,x)\in\R\times\R^N,\\
&u(0,x)=u_0(x), &&x\in\R^N,
\end{aligned}
\r.
\end{align}
where $a, b\in\R$ and
\begin{align*}
1<p<q<2^*-1, \quad
2^*:=
\l\{
\begin{aligned}
&\infty &&\text{if}~N=1, 2,\\ 
&\ds\frac{2N}{N-2} &&\text{if}~N\geq 3. 
\end{aligned}
\r.
\end{align*}
The Cauchy problem for \eqref{eq:NLS} has been extensively studied (see, e.g., \cite{C} and the references therein).
It is well known that for any $u_0\in H^1(\R^N)$ there exists a unique maximal solution 
\begin{align*}
u\in C((-T_{\rm min}, T_{\rm max}),H^1(\R^N))\cap C^1((-T_{\rm min}, T_{\rm max}),H^{-1}(\R^N)) \quad
\end{align*}
of \eqref{eq:NLS}, and that the energy and the charge are conserved by the flow:
\begin{align*}
E(u(t))&:=\frac{1}{2}\|\nabla u(t) \|_{L^2}^2 +\frac{a}{p+1}\| u(t)\|_{L^{p+1}}^{p+1} -\frac{b}{q+1}
\| u(t)\|_{L^{q+1}}^{q+1} =E(u_0),\\
\| u(t)\|_{L^2}^2&=\| u_0\|_{L^2}^2
\end{align*}
for all $t\in (-T_{\rm min}, T_{\rm max})$. Moreover, the following blowup criterion holds: if $T_{\max}<\infty$ (resp.\ $T_{\min}<\infty$), then $\|\nabla u(t)\|_{L^2}\to\infty$ as $t\uparrow T_{\max}$ (resp.\ as $t\downarrow -T_{\min}$). 
In \cite{TVZ07} the authors studied typical global properties like global well-posedness, scattering, and blowup for \eqref{eq:NLS}. 

In this paper we study the stability properties of standing waves of \eqref{eq:NLS}. We are especially interested in the case $a>0$ and $b>0$ because the equation for this case has algebraically decaying standing waves with zero frequency, which we call \textit{algebraic standing waves}, as well as usual standing waves decaying exponentially with positive frequency. Our main results concern strong instability for all frequencies when $q\ge 1+4/N$ (Theorem \ref{thm:1.8}) and instability for small frequencies when $q<1+4/N$ (Theorem \ref{thm:1.12}), which give the first results on stability properties of algebraic standing waves. The new instability condition in Theorem \ref{thm:1.12} not only improves previous results in one-dimensional case but also gives a first result on instability in higher-dimensional case. We note that this condition is characterized in terms of algebraic standing waves.


\subsection{Ground states on zero mass case}
\label{sec:1.2}
We now focus on the equation \eqref{eq:NLS} for the case $a>0$ and $b>0$.
By scalar multiplication and the scaling $u\mapsto\mu\lambda^{\frac{2}{p-1}}u(\lambda^2t ,\lambda x)$, we may always take $a=1$ and $b=1$ in \eqref{eq:NLS}:
\begin{align}
\label{NLS}
\l\{
\begin{aligned}
&i\del_t u+\Delta u-|u|^{p-1}u+|u|^{q-1}u=0,&& (t,x)\in\R\times\R^N, \\
&u(0,x)=u_0(x), && x\in\R^N.
\end{aligned}\r.
\end{align}
If we consider the standing wave solution
$
e^{i\omega t}\phi_{\omega}(x),
$
then $\phi_{\omega}$ satisfies the following elliptic equation:
\begin{align}
\label{EL}
-\Delta \phi +\omega\phi +|\phi|^{p-1}\phi -|\phi|^{q-1}\phi =0,\quad x\in\R^N.
\end{align}
It is known that there exist ground states of \eqref{EL} when $\omega\geq 0$ (see \cite{BeL83}). 
The case $\omega=0$ corresponds to \textit{zero mass} case in elliptic equations, and associated problems are more delicate in many cases compared with the case $\omega >0$.

The ground state of \eqref{EL} for $\omega =0$ appears as an optimizer of 
the following Gagliardo--Nirenberg inequality
\begin{align}
\label{GN}
\| f \|_{L^{q+1}} \leq C_{GN}\| f\|_{L^{p+1}}^{1-\theta}\| \nabla f\|_{L^2}^{\theta}
\quad \text{for all}~f\in \dot{H}^1(\R^N)\cap L^{p+1}(\R^N),
\end{align}
where $(p,q,N)$ and $\theta \in (0,1)$ satisfy 
\begin{align*}
\frac{N}{q+1}=(1-\theta)\frac{N}{p+1} +\theta\l(-1+\frac{N}{2}\r).
\end{align*}
When $q=2p-1$ the optimal constant and the optimizers are explicitly found in \cite{DD02}.
The existence of optimizers in general case was studied in \cite{BFV14}.
We note that the end point case $\theta =1$ in \eqref{GN} turns into the Sobolev inequality
\begin{align}
\label{eq:1.4}
\| f\|_{L^{2^*}} \leq C_S\| \nabla f\|_{L^2}
\quad \text{for all}~f\in \dot{H}^1(\R^N), ~N\geq 3.
\end{align}
The existence and uniqueness of optimizers for \eqref{eq:1.4} were proven in \cite{L83a}, and the optimizer is given by the Talenti function (see \cite{T76})
\begin{align*}
W(x) :=\l( 1+\frac{|x|^2}{N(N-2)}\r)^{-\frac{N-2}{2}}, 
\end{align*} 
which solves the following elliptic equation:
\begin{align}
\label{eq:1.5}
-\Delta W -|W|^{\frac{4}{N-2}}W =0 , \quad x\in\R^N.
\end{align}
The equation \eqref{eq:1.5} corresponds to the stationary problem of \eqref{eq:NLS} for the case $a=0$, $b=1$, and $q=2^*-1$, whose study is important to understand the global dynamics for energy critical evolution equations (see, e.g., \cite{KM06, KV10, DJKM17} and references therein).

The zero mass problem also arises when we study the solitons for the derivative nonlinear Schr\"odinger equation (DNLS):
\begin{align}
\tag{DNLS}
\label{DNLS}
i\del_t u +\del_x^2 u+ i\del_x(|u|^2 u) =0,\quad (t,x)\in \R\times\R, 
\end{align}
which appears in a model for the propagation of Alfv\'{e}n waves in magnetized plasma.
It is known that \eqref{DNLS} has a two-parameter family of solitons
\begin{align*}
e^{i\omega t}\phi_{\omega ,c}(x-ct)\quad\text{with}\ -2\sqrt{\omega}<c\leq 2\sqrt{\omega},
\end{align*}
where $\phi_{\omega ,c}$ is the complex-valued function,\footnote{The profile $\phi_{\omega ,c}$ is expressed in terms of $\Phi_{\omega ,c}$ as 
\[
\phi_{\omega,c}(x) 
= \Phi_{\omega,c}(x) \exp\l( \frac{i}{2}cx - \frac{3}{4}i \int_{-\infty}^{x} \l|\Phi_{\omega,c}(y)\r|^2 dy \r).
\]
} 
and the modulus of the soliton $\Phi_{\omega ,c}=|\phi_{\omega ,c}|$ satisfies the following elliptic equation:
\begin{align}
\label{eq:1.6}
-\Phi'' +\l( \omega -\frac{c^2}{4} \r)\Phi +\frac{c}{2}|\Phi|^2\Phi -\frac{3}{16}|\Phi|^4\Phi =0, \quad x\in\R.
\end{align}
When $0<c\leq 2\sqrt{\omega}$ this equation corresponds to the equation \eqref{EL} with $p=3$ and $q=5$ in one space dimension. Two types of solitons have the following explicit decay at infinity; 
\begin{align*}
\begin{aligned}
&\text{if}~\omega >c^2/4, &&|\phi_{\omega ,c}(x)| \sim e^{-\sqrt{4\omega -c^2}|x|}\\
&\text{if}~c=2\sqrt{\omega}, &&|\phi_{\omega ,c}(x)| \sim (c|x|)^{-1}
\end{aligned}\quad
\text{as}~ |x| \gg 1,
\end{align*}
and the case $c=2\sqrt{\omega}$ corresponds to algebraic solitons. 
It has been clear in the recent studies of \eqref{DNLS} (see \cite{W15, FHI17, H19a}) that algebraic solitons give a certain threshold for global properties of the solutions.

The aim of this paper is to investigate the stability properties of standing waves of \eqref{NLS}, especially algebraic standing waves (zero mass case), which seem to have been less studied from the viewpoint of nonlinear dispersive equations.

We first organize the properties of the ground states of \eqref{EL}. The action functional with respect to \eqref{EL} is defined by
\begin{align*}
S_{\omega}(v) :=\frac{1}{2}\|\nabla v\|_{L^2}^2 +\frac{\omega}{2}\| v\|_{L^2}^2 +\frac{1}{p+1}\| v\|_{L^{p+1}}^{p+1} -\frac{1}{q+1}\| v\|_{L^{q+1}}^{q+1}.
\end{align*}
We note that $S_0$ is well defined on $\dot{H}^1(\R^N)\cap L^{p+1}(\R^N)$, and that \eqref{EL} is rewritten as $S_{\omega}'(\phi)=0$.
We set 
\begin{align*}
\scA_{\omega}&:=\{ v\in H^1(\R^N)\setminus\{ 0\} : S_{\omega}'(v)=0\}\quad\text{for}~\omega >0,\\
\scA_{0}&:=\{ v\in \dot{H}^1(\R^N)\cap L^{p+1}(\R^N)\setminus\{ 0\} : S_{0}'(v)=0\},
\end{align*}
and for $\omega \geq 0$ we set
\begin{align*}
d(\omega)&:=\inf\{S_\omega(v):v\in\scA_\omega\}, \\
\scG_{\omega}&:=\{ v\in \scA_{\omega} : S_{\omega}(v)=d(\omega)\}.
\end{align*}
The element of $\scG_{\omega}$ is called a ground state of \eqref{EL}, which is characterized as follows. 
\begin{proposition}
\label{prop:1.1}
Let $\omega\geq 0$. There exists a unique, positive, radial, and decreasing function $\phi_{\omega}\in\scA_{\omega}$ such that
\begin{align*}
\scG_{\omega} =\{ e^{i\theta}\phi_{\omega}(\cdot -y): \theta\in\R,~y\in\R^N\}.
\end{align*} 
In particular, $d(\omega)=S_\omega(\phi_\omega)$.
\end{proposition}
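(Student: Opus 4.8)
The statement decomposes into two parts: the existence of a ground state that is positive, radial and decreasing, and the assertion that every element of $\scG_\omega$ is a phase rotation and spatial translation of one fixed such $\phi_\omega$. The substance of the proposition lies in the uniqueness implicit in the second part. For the first part I would use the already-quoted existence of a ground state of \eqref{EL} for every $\omega\ge0$ (\cite{BeL83}; for $\omega=0$ this is also the attainment of the best constant in \eqref{GN}, see \cite{BFV14}), combined with a routine symmetrization: passing from a minimizer to the Schwarz rearrangement of its modulus yields a nonnegative, radially symmetric, nonincreasing element of $\scG_\omega$, which we fix and call $\phi_\omega$.

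Next, let $v\in\scG_\omega$ be arbitrary. The diamagnetic inequality gives $\|\nabla|v|\|_{L^2}\le\|\nabla v\|_{L^2}$, while $\|\,|v|\,\|_{L^r}=\|v\|_{L^r}$ for all $r$, so $S_\omega(|v|)\le S_\omega(v)=d(\omega)$. Since $d(\omega)$ equals the least value of $S_\omega$ on the Pohozaev (or Nehari) constraint set, a set stable under the map $v\mapsto|v|$, we also have $S_\omega(|v|)\ge d(\omega)$; hence $S_\omega(|v|)=d(\omega)$, $|v|$ is itself a ground state solving \eqref{EL}, and equality must hold in the diamagnetic inequality, which forces $v=e^{i\theta}|v|$ for a constant $\theta\in\R$. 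Setting $w:=|v|\ge0$ and rewriting \eqref{EL} as the linear differential inequality $-\Delta w+(\omega+w^{p-1})w=w^q\ge0$ with locally bounded zeroth-order coefficient, the strong maximum principle yields $w>0$ on $\R^N$. Finally, the moving plane method --- the Gidas--Ni--Nirenberg theorem when $\omega>0$, and its version for positive solutions decaying at infinity when $\omega=0$ --- shows that $w$ is, up to a translation by some $y\in\R^N$, radially symmetric and strictly decreasing. Thus $v=e^{i\theta}w(\cdot-y)$ with $w$ a positive radial decreasing ground state.

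It remains to prove that two positive radial decreasing solutions of \eqref{EL} must coincide; equivalently, that the radial ODE
\[
w''+\frac{N-1}{r}w'-\omega w-w^p+w^q=0,\qquad w'(0)=0,\quad w>0,\quad \lim_{r\to\infty}w(r)=0,
\]
has at most one solution. Here I would invoke the uniqueness theory for ground states of $\Delta u+g(u)=0$ with $g(s)=s^q-s^p-\omega s$ (Kwong, McLeod, Serrin--Tang, Pucci--Serrin). One checks that $g$ vanishes only at $0$ and at a single $\beta=\beta(\omega)>0$, with $g<0$ on $(0,\beta)$ and $g>0$ on $(\beta,\infty)$, and that the structural hypotheses of those theorems --- monotonicity of $s\mapsto sg'(s)/g(s)$ on $(\beta,\infty)$, positivity of $G(s)=\int_0^s g$ for some $s>0$, and the like --- follow from elementary comparisons between the power functions $s^{p-1}$ and $s^{q-1}$. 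For $\omega>0$ this is the classical positive-mass situation ($g'(0)=-\omega<0$, exponentially decaying ground state); for $\omega=0$ one is in the zero-mass situation ($g'(0)=0$, algebraically decaying ground state) and uses the zero-mass form of these results. Together with the previous two steps this gives $\scG_\omega=\{e^{i\theta}\phi_\omega(\cdot-y):\theta\in\R,\ y\in\R^N\}$, and in particular $d(\omega)=S_\omega(\phi_\omega)$.

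The main obstacle is the uniqueness step, and specifically the case $\omega=0$: the shooting and comparison techniques underlying ground-state uniqueness are tailored to $g'(0)<0$, so the algebraic ground state --- with only polynomial decay, and for which even the moving plane argument requires its decaying-solution variant --- has to be handled by the more delicate zero-mass machinery, and phrasing the conclusion uniformly over all $\omega\ge0$ is exactly the subtle point. A secondary, minor issue is verifying that the symmetrization in the second step really returns a ground state in low dimensions $N\le2$, where the gradient term in the Pohozaev/Nehari functional behaves differently; this is dealt with by the standard dimension-dependent variants of the argument.
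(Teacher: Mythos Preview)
Your outline is correct and follows essentially the same three-step strategy as the paper (existence of a positive radial ground state, reduction of an arbitrary ground state to a positive radial one via phase elimination and moving planes, then ODE uniqueness via Serrin--Tang/Pucci--Serrin). The paper packages the first two steps differently: it first proves the Nehari characterization $\scG_\omega=\scM_\omega$ (Theorem~\ref{thm:2.1}) together with the inequality-constrained reformulation $\mu(\omega)=\inf\{J_\omega(v):K_\omega(v)\le 0\}$ (Lemma~\ref{lem:2.6}), and then in Lemma~\ref{lem:2.10} eliminates the phase by decomposing $\phi=\lvert\re\phi\rvert+i\lvert\im\phi\rvert$ and invoking the equality case of \cite[Theorem~7.8]{LL}, rather than the diamagnetic inequality directly; for symmetry it cites Li--Ni \cite{LN93}, whose hypotheses cover the zero-mass decay.

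One imprecision worth flagging: your claim that the Nehari constraint set is ``stable under $v\mapsto|v|$'' is not true as stated, since $K_\omega(|v|)\le K_\omega(v)=0$ with possibly strict inequality. What rescues the argument is precisely the paper's Lemma~\ref{lem:2.6}: from $K_\omega(|v|)\le0$ and $J_\omega(|v|)=J_\omega(v)=\mu(\omega)$ one deduces $K_\omega(|v|)=0$ a posteriori. Relatedly, equality in the diamagnetic inequality only forces constant phase on each connected component of $\{|v|>0\}$, so you should first establish $|v|>0$ everywhere (via the strong maximum principle, once $|v|$ is known to solve \eqref{EL}) and \emph{then} conclude the phase is globally constant; your ordering of these two steps should be swapped. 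These are minor fixes, not gaps in the overall scheme.
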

Existence of ground states was first proven in \cite{BeL83}. Symmetry of ground states follows from the result of Ni and Li \cite{LN93} (see also \cite{GNN81}). Uniqueness of positive radial solutions follows from the results of Serrin and Tang \cite{ST00} for $N\geq 3$ and the results of Pucci and Serrin \cite{PS98} for $N=2$. In Section \ref{sec:2} we revisit the existence theory on the ground states of \eqref{EL} and give a variational characterization of them on the Nehari manifold, which is very useful for the proofs of our main theorems.

Next we state the decay estimates of ground states on the zero mass case.
It is well known that the ground states for $\omega >0$ decay exponentially at infinity (see, e.g., \cite{BeL83}).
On the other hand ground states on the zero mass case have slower algebraic decay at infinity. The sharp decay estimates on the zero mass case was first obtained by Ver\'on \cite{V81} (see also \cite{DS10, DSW11}). To state the result we define the exponent $p^*$ as
\begin{align*}
p^* =p^*(N)=
\l\{
\begin{aligned}
&\infty &&\text{if}~N=1, 2,\\ 
&\ds\frac{N}{N-2} &&\text{if}~N\geq 3. 
\end{aligned}
\r.
\end{align*}
Then we have the following result:
\begin{proposition}[{\cite{V81}}]\label{prop:1.2}
Let $\phi_{0}$ as in Proposition \ref{prop:1.1}. We write $\phi_0(x)=\phi_0(r)$, where $r=|x|$. Then we have
\begin{align}
\label{eq:1.7}
\phi_0(r) \underset{r\to\infty}{\sim}
\l\{
\begin{aligned}
&r^{-\frac{2}{p-1}}&&\text{if}~p<p^*,
\\
&r^{-(N-2)} \l(\log r \r)^{-\frac{N-2}{2}}&&\text{if}~p=p^*,
\\
&r^{-(N-2)}&&\text{if}~p>p^* ,
\end{aligned}
\r.
\end{align}
where $\phi_0(r)\underset{r\to\infty}{\sim} r^{-a}$ for some $a>0$ means that the limit $\lim_{r\to\infty}\phi_0(r)r^a$ exists and its value is positive. 
\end{proposition}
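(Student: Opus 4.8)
The proof of Proposition~\ref{prop:1.2} is an ODE analysis of the radial profile, so the plan is to reduce \eqref{EL} with $\omega=0$ to a single second-order ODE in $r=|x|$ and extract the asymptotics by comparison with the linear equation $-\Delta\phi=0$. First I would note that, since $\phi_0$ is positive, radial, and decreasing (Proposition~\ref{prop:1.1}), and since $\phi_0\in L^{p+1}$ forces $\phi_0(r)\to 0$, for large $r$ the subcritical term $|\phi_0|^{q-1}\phi_0$ is negligible compared with $|\phi_0|^{p-1}\phi_0$ (because $q>p$ and $\phi_0\to 0$). Thus the leading balance for large $r$ is governed by
\begin{align*}
\phi_0''+\frac{N-1}{r}\phi_0'=\phi_0^{\,p}\,(1+o(1)),
\end{align*}
and the whole question is whether, for this model equation, the nonlinearity $\phi_0^{\,p}$ dominates the Laplacian (giving the self-similar rate $r^{-2/(p-1)}$, which makes $\Delta\phi_0$ lower order) or is dominated by it (giving the harmonic rate $r^{-(N-2)}$, i.e.\ $\phi_0$ behaves like a multiple of the fundamental solution $r^{-(N-2)}$ plus a correction). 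The threshold between the two scenarios is exactly where $r^{-2/(p-1)}$ and $r^{-(N-2)}$ coincide as powers, i.e.\ $\tfrac{2}{p-1}=N-2$, which is $p=p^*=\tfrac{N}{N-2}$; this explains the trichotomy in \eqref{eq:1.7}, with the logarithmic factor appearing at the resonant case $p=p^*$ in the familiar way (integrating $r^{N-1}\cdot r^{-p(N-2)}$ produces a $\log r$ when $p(N-2)=N$, i.e.\ again $p=p^*$).

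To make this rigorous I would proceed in three steps. Step 1: establish crude two-sided bounds $c\,r^{-\alpha}\le\phi_0(r)\le C\,r^{-\alpha}$ for the appropriate exponent $\alpha$ in each regime. The upper bound can be obtained from the Emden--Fowler/Kelvin transform $v(s)=r^{N-2}\phi_0(r)$ with $s=r^{2-N}$ (for $N\ge3$), which turns \eqref{EL} with $\omega=0$ into an equation on $(0,\text{small})$ with no singular first-order term, so that ODE comparison/barrier arguments with sub- and supersolutions of the form $s^\beta$ apply cleanly; the lower bound follows from a maximum-principle comparison against the fundamental solution after absorbing the nonlinear term. Step 2: bootstrap these crude bounds. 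For $p>p^*$ one writes the integral (Green's) representation $\phi_0(r)=\int G(r,\rho)\bigl(\phi_0^{\,q}-\phi_0^{\,p}\bigr)(\rho)\,\rho^{N-1}\,d\rho$, checks that the right-hand side is integrable at infinity precisely because $p>p^*$, and deduces $\phi_0(r)\sim \kappa\, r^{-(N-2)}$ with $\kappa$ the value of that integral, which is positive since the integrand is eventually positive (dominated by $-\phi_0^{\,p}<0$... more precisely one tracks signs carefully near the origin where the integrand behaves well). For $p<p^*$ one instead substitutes the ansatz $\phi_0(r)=r^{-2/(p-1)}(\ell+o(1))$ into the model ODE, matches the constant $\ell=\bigl(\tfrac{2}{p-1}(N-2-\tfrac{2}{p-1})\bigr)^{1/(p-1)}$, and uses a fixed-point/continuity argument near this exact self-similar solution to upgrade $o(1)$ to a genuine limit. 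Step 3: the critical case $p=p^*$ is handled by the resonant normal form: the substitution $\phi_0(r)=r^{-(N-2)}\psi(\log r)$ reduces matters to an autonomous-type ODE whose balanced solution decays like $(\log r)^{-(N-2)/2}$, again by a comparison argument.

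The main obstacle I expect is Step 2/3, i.e.\ turning the crude power bounds into sharp asymptotics with existence of the limit $\lim_{r\to\infty}\phi_0(r)r^{a}$ (and positivity of the constant), rather than merely two-sided bounds of the correct order. Crude bounds come relatively cheaply from barriers, but pinning down the exact self-similar constant for $p<p^*$, and especially controlling the resonant $p=p^*$ case where the correction is only logarithmic, requires a careful linearization around the model profile and an ODE-dynamical-systems argument (analyzing the flow near the relevant fixed point / using a suitable Lyapunov functional or Grönwall estimate on the difference). Since this is classical and due to V\'eron \cite{V81}, in the paper I would only sketch the heuristic balance explaining the three cases and the location $p=p^*$ of the threshold, and refer to \cite{V81, DS10, DSW11} for the full technical argument, as the statement is quoted rather than reproved here.
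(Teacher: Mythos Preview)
The paper gives no proof of Proposition~\ref{prop:1.2} at all: the result is simply attributed to V\'eron~\cite{V81} (with the additional references~\cite{DS10, DSW11} mentioned in the surrounding text), and the paper moves on. Your final paragraph reaches exactly this conclusion, so in that sense your proposal matches the paper.

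Your heuristic sketch goes well beyond what the paper offers and is broadly sound: the reduction to the model balance $\phi_0''+\tfrac{N-1}{r}\phi_0'=\phi_0^{\,p}(1+o(1))$, the identification of the threshold $p=p^*$ via $\tfrac{2}{p-1}=N-2$, and the three-step strategy (crude barrier bounds, then sharpening via Green/Kelvin representation or linearization, with the resonant log in the critical case) are exactly the right ideas. One small computational slip: for $p<p^*$ your self-similar constant should read $\ell^{p-1}=\tfrac{2}{p-1}\bigl(\tfrac{2}{p-1}-(N-2)\bigr)$, not $\tfrac{2}{p-1}\bigl((N-2)-\tfrac{2}{p-1}\bigr)$; the former is positive precisely when $p<p^*$, which is the case at hand. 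Also, in Step~2 for $p>p^*$ your sign discussion around the Green representation is a bit muddled (as you yourself flag); in practice one shows $r^{N-2}\phi_0(r)$ is monotone and bounded, hence convergent to a positive limit, rather than reading the constant off an integral directly.
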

\begin{remark}
\label{rem:1.3}
We note the following relation:
\begin{align*}
\frac{2}{p^*-1}=N-2,~
\max \l\{ \frac{2}{p-1}, N-2 \r\}=
\l\{
\begin{aligned}
&\frac{2}{p-1} && \text{if}~p<p^*,\\
&N-2 &&\text{if}~p>p^*.
\end{aligned}
\r.
\end{align*}
\end{remark}
As a simple application of Proposition \ref{prop:1.2} we obtain a necessary and sufficient condition for $\phi_0\in L^2(\R^N)$.
\begin{corollary}
\label{cor:1.4}
$\phi_0\in L^2(\R^N)$ holds if and only if
\begin{align}
\label{eq:1.8}
\begin{aligned}
&1<p<1+\frac{4}{N}&&\text{if}~1\le N\le 3,\\
&1<p\le 1+\frac{4}{N}& &\text{if}~N=4,\\
&N\ge 5.&&
\end{aligned}
\end{align}
\end{corollary}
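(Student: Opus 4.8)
The plan is to reduce the claim to the integrability at infinity of the radial weight $r\mapsto\phi_0(r)^2 r^{N-1}$ and then read off the answer from the three decay regimes in Proposition \ref{prop:1.2}. Since $\phi_0$ is continuous and bounded on $\R^N$ (by standard elliptic regularity for \eqref{EL}), we have $\phi_0\in L^2(\R^N)$ if and only if $\int_1^\infty\phi_0(r)^2 r^{N-1}\,dr<\infty$. Throughout I would use the elementary fact that $\int_1^\infty r^{-\alpha}(\log r)^{-\beta}\,dr$ converges if and only if either $\alpha>1$, or $\alpha=1$ and $\beta>1$.

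First I would treat the case $p<p^*$. By Proposition \ref{prop:1.2}, $\phi_0(r)\sim r^{-2/(p-1)}$, so $\phi_0(r)^2 r^{N-1}\sim r^{N-1-4/(p-1)}$, and the integral converges if and only if $N-4/(p-1)<0$, i.e.\ $p<1+4/N$. It then remains to compare this threshold with $p^*$. From $(N+4)(N-2)-N^2=2(N-4)$ one gets that $1+4/N<p^*$, $=p^*$, or $>p^*$ according as $N<4$, $N=4$, or $N>4$ (with $p^*=\infty$ for $N=1,2$). Hence: for $1\le N\le 3$ the constraint $p<1+4/N$ is effective; for $N=4$ one has $1+4/N=p^*=2$, so the whole range $p<p^*$ automatically satisfies $p<1+4/N$; and for $N\ge 5$ one has $p<p^*<1+4/N$ automatically, so $\phi_0\in L^2$ with no restriction on $p$ in this regime.

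Next I would handle the endpoint $p=p^*$ (which occurs only for $N\ge 3$): here $\phi_0(r)\sim r^{-(N-2)}(\log r)^{-(N-2)/2}$, so $\phi_0(r)^2 r^{N-1}\sim r^{-(N-3)}(\log r)^{-(N-2)}$, which is integrable if and only if $N-3>1$, or $N-3=1$ and $N-2>1$; that is, if and only if $N\ge 4$. Finally, for $p>p^*$ (again $N\ge 3$) one has $\phi_0(r)\sim r^{-(N-2)}$, so $\phi_0(r)^2 r^{N-1}\sim r^{-(N-3)}$, which is integrable if and only if $N\ge 5$. Assembling the three cases: when $1\le N\le 3$ only $p<1+4/N$ works; when $N=4$ the values $p<2$ and $p=2=p^*$ work (the latter thanks to the logarithmic gain in the $p=p^*$ case) while $p>2$ fails, giving $1<p\le 1+4/N$; and when $N\ge 5$ every admissible $p$ works. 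This is precisely \eqref{eq:1.8}. The argument has no real obstacle; the only point requiring care is the bookkeeping that matches the three decay regimes against the two-sided comparison of $1+4/N$ with $p^*$, and in particular not overlooking the logarithmic correction at $p=p^*$, which is exactly what makes the endpoint $p=1+4/N$ admissible when $N=4$.
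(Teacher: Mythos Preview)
Your proof is correct and follows essentially the same approach as the paper: reduce $\phi_0\in L^2$ to integrability at infinity, apply the three decay regimes of Proposition~\ref{prop:1.2}, and compare the threshold $1+4/N$ with $p^*$. Your treatment is in fact more explicit than the paper's, particularly in handling the logarithmic correction at $p=p^*$ that secures the endpoint $p=1+4/N$ when $N=4$.
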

\begin{proof}
If we set $p_c(N):=1+4/N$, we have 
\begin{align}
\label{eq:1.9}
\begin{aligned}
&p_c(3)<p^*(3),~p_c(4)=p^*(4),\\
&p^*(N)<p_c(N)\quad\text{if}~N\ge 5.
\end{aligned}
\end{align}
The result follows from Proposition \ref{prop:1.2}, \eqref{eq:1.9} and the following fact
\begin{align*}
\braket{x}^{-\frac{2}{p-1}} \in L^2(\R^N) &\iff p<1+\frac{4}{N},
\\
\braket{x}^{-(N-2)} \in L^2(\R^N) &\iff N>4,
\end{align*}
where $\braket{x}:=(1+|x|^2)^{1/2}$.
\end{proof}

\subsection{Stability results in previous works}
\label{sec:1.3}
Here and hereafter we only consider standing waves $e^{i\omega t}\phi_\omega$ whose profile $\phi_\omega$ is a positive and radial ground state as in Proposition~\ref{prop:1.1}.
We note that $\phi_0$ is the $H^1$-solution of \eqref{NLS} if \eqref{eq:1.8} holds. First we give a definition of stability, instability, and strong instability of standing waves.
\begin{definition}\label{def:1.5}
Let $\omega \ge 0$ and further assume \eqref{eq:1.8} if $\omega=0$.
We say that the standing wave $e^{i\omega t}\phi_{\omega}$ of \eqref{NLS} is (orbitally) \emph{stable} if for any $\eps>0$ there exists $\delta>0$ such that the following statement holds:
If $u_0\in H^1(\R^N)$ satisfies $\|u_0-\phi_{\omega}\|_{H^1}<\delta$, 
then the solution $u(t)$ of \eqref{NLS} exists globally in time and satisfies 
\[
\sup_{t\in\R}\inf_{(\theta,y)\in\R\x\R^N}\|u(t)-e^{i\theta}\phi_{\omega}(\cdot-y)\|_{H^1}<\eps.
\]
Otherwise, we say that it is (orbitally) \emph{unstable}.

We say that the standing wave $e^{i\omega t}\phi_{\omega}$ is \emph{strongly unstable} if for any $\eps>0$ there exists $u_0\in H^1(\R^N)$ such that $\|u_0-\phi_{\omega}\|_{H^1}<\eps$, and the solution $u(t)$ of \eqref{NLS} blows up in finite time.
\end{definition}

In the case of the pure power nonlinearity (\eqref{eq:NLS} for $a=0$ and $b>0$), stability properties of standing waves are well understood. When $q> 1+4/N$, Berestycki and Cazenave~\cite{BC81} proved that the standing wave $e^{i\omega t}\psi_{\omega}$\footnote{For clarity we write the profile of standing waves as $\psi_{\omega}$ for the pure power nonlinearity.} is strongly unstable for all $\omega>0$ 
(see \cite{W82} for the case $q=1+4/N$), where variational characterizations related to the virial identity are effectively used. 
When $q<1+4/N$, Cazenave and Lions~\cite{CL82} proved that the standing wave 
$e^{i\omega t}\psi_{\omega}$ is stable for all $\omega>0$ by variational and compactness arguments.
Note that when $a=\omega=0$ and $b>0$, the stationary problem of \eqref{eq:NLS} does not have any nontrivial solution decaying at infinity. 

Weinstein \cite{W85, W86} introduced another approach for stability study, and later this approach was generalized by Grillakis, Shatah, and Strauss \cite{GSS87, GSS90}. Roughly speaking, the abstract theory in \cite{GSS87, GSS90} says that the standing wave $e^{i\omega t}\phi_\omega$ is stable if $\del_\omega\|\phi_\omega\|_{L^2}^2>0$ and unstable if $\del_\omega\|\phi_\omega\|_{L^2}^2<0$, provided some spectral conditions of the linearized operator $S_{\omega}''(\phi_{\omega})$ are satisfied.
For the case of pure power nonlinearity, by the scaling symmetry of the equation, the profiles of standing waves are rewritten as $\psi_{\omega}(x)=\omega^{\frac{1}{q-1}}\psi_1(\sqrt{\omega}\, x)$. From this relation one can easily compute the quantity $\del_\omega\|\psi_\omega\|_{L^2}^2$, and for any $\omega>0$ we have
\begin{align*}
\del_{\omega} \|\psi_{\omega}\|_{L^2}^2>0 &\iff q<1+\frac{4}{N},\\
\del_{\omega} \|\psi_{\omega}\|_{L^2}^2<0 &\iff q>1+\frac{4}{N}.
\end{align*}
However, if we consider double power nonlinearities, it is very delicate to investigate the sign of $\del_\omega\|\phi_\omega\|_{L^2}^2$, especially in higher dimensions, due to the lack of scaling symmetry of the equation. In the first place it is rather nontrivial in general to check that spectral assumptions are satisfied. We also note that the abstract theory is not applicable to the zero mass case $\omega=0$, even for one-dimensional case, due to the lack of coercivity property of the linearized operator. 

In one-dimensional case, Iliev and Kirchev~\cite{IK93} calculated the quantity $\del_\omega\|\phi_\omega\|_{L^2}^2$ for rather general nonlinearities and established the stability results. Ohta~\cite{O95dp} further studied stability properties for the case of double power nonlinearities by using the formula of \cite{IK93}, and proved the following result for \eqref{NLS} when $N=1$:
\begin{itemize}
\item
When $q\ge 5$, then the standing wave $e^{i\omega t}\phi_\omega$ is unstable for all $\omega>0$.
\item
When $q<5$, there exists $\omega_1>0$ such that the standing wave $e^{i\omega t}\phi_\omega$ is stable for $\omega>\omega_1$.
 Assuming further $p+q>6$, then there exists $\omega_0\in (0,\omega_1)$ such that the standing wave $e^{i\omega t}\phi_\omega$ is unstable for $\omega\in (0,\omega_0)$.
\end{itemize}
Later, Maeda~\cite{M08} improved this result and bridged a gap between $\omega_0$ and $\omega_1$.
The results in \cite{O95dp, M08} imply that stability properties of standing waves for double power nonlinearities may change for the values of $\omega$, which give quite different phenomena from the case of pure power nonlinearity.

The derivation of the formula in \cite{IK93} heavily depends on one dimension, so one cannot expect that similar calculations hold in higher dimensions. One of useful approaches to study stability properties of standing waves in higher dimensions is perturbation arguments as follows. Let us consider the rescaled function of $\phi_{\omega}$ as
\begin{align}
\label{eq:1.10}
 \phi_{\omega}(x)=\omega^{\frac{1}{q-1}}\til{\phi}_{\omega}(\sqrt{\omega}\,x) 
 \quad\text{for}~\omega >0.
\end{align}
We see that $\til{\phi}_{\omega}$ is the positive and radial solution of the equation
\begin{align}
\label{eq:1.11}
-\Delta\til{\phi}_{\omega}+\til{\phi}_{\omega}+\omega^{-\frac{q-p}{q-1}}|\til{\phi}_{\omega}|^{p-1}\til{\phi}_{\omega}-|\til{\phi}_{\omega}|^{q-1}\til{\phi}_{\omega} =0,\quad x\in\R^N.
\end{align}
The third term formally goes to $0$ as $\omega\to\infty$, so one can expect the stability properties for large $\omega$ are similar as in the case of pure power nonlinearity.
Such arguments have been developed in \cite{FO03a, FO03b, F03, CO14} to study the stability properties of standing waves for several types of nonlinear Schr\"{o}dinger equations. We note, however, that this type of arguments are not valid for small $\omega$ in our setting. Indeed, even if we consider another rescaled function of $\phi_{\omega}$ like vanishing the term $-|\phi|^{q-1}\phi$ as $\omega\downarrow 0$, this does not bring any useful information on stability properties because \eqref{eq:NLS} has no standing waves when $a>0$ and $b=0$.


\subsection{Main results}
\label{sec:1.4}
We now state our main results. Our first theorem gives a connection of two types of ground states. This result is of independent interest and also used for the proof of Theorem \ref{thm:1.12} below.
\begin{theorem}
\label{thm:1.6}
Let $\phi_{\omega}$ as in Proposition \ref{prop:1.1}. Then we have $\phi_{\omega}\to \phi_0$ strongly in $\dot{H}^1(\R^N)\cap L^{p+1}(\R^N)$ as $\omega\downarrow 0$. If we further assume \eqref{eq:1.8}, we have $\phi_{\omega}\to \phi_0$ strongly in $H^1(\R^N)$ as $\omega\downarrow 0$.
\end{theorem}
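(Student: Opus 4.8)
The plan is to prove the convergence $\phi_\omega \to \phi_0$ as $\omega \downarrow 0$ by a variational compactness argument, exploiting the Nehari-manifold characterization of the ground states (to be established in Section~\ref{sec:2}). First I would record the monotonicity and boundedness of the ground-state quantities: the map $\omega \mapsto d(\omega)$ is nondecreasing (since $S_\omega(v) \le S_{\omega'}(v)$ for $\omega \le \omega'$ and fixed $v$), and, more usefully, one expects $d(\omega) \to d(0)$ as $\omega \downarrow 0$. The upper bound $\limsup_{\omega\downarrow 0} d(\omega) \le d(0)$ follows by using a suitable cutoff of $\phi_0$ (or $\phi_0$ itself when \eqref{eq:1.8} holds) as a test function on the Nehari manifold $\scN_\omega$ and letting $\omega\downarrow 0$; the lower bound $\liminf_{\omega\downarrow 0} d(\omega) \ge d(0)$ is immediate from $S_0 \le S_\omega$ on $H^1$. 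Together with the scaling/Pohozaev-type identities satisfied by $\phi_\omega$, the convergence $d(\omega)\to d(0)$ pins down the limiting values of $\|\nabla \phi_\omega\|_{L^2}^2$, $\|\phi_\omega\|_{L^{p+1}}^{p+1}$, $\|\phi_\omega\|_{L^{q+1}}^{q+1}$, and crucially forces $\omega\|\phi_\omega\|_{L^2}^2 \to 0$; in particular $\{\phi_\omega\}$ is bounded in $\dot H^1 \cap L^{p+1}$.

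Next I would extract a weak limit. By the uniform bound, along a subsequence $\phi_\omega \rightharpoonup \psi$ weakly in $\dot H^1 \cap L^{p+1}$; since each $\phi_\omega$ is positive, radial and decreasing, Strauss-type radial compactness gives strong convergence in $L^{r}(\R^N)$ for the relevant subcritical exponents, so $\psi$ solves \eqref{EL} with $\omega=0$ and $\psi \ge 0$. One then shows $\psi \not\equiv 0$: this is where the term $\omega\|\phi_\omega\|_{L^2}^2\to 0$ and a lower bound on $\|\nabla\phi_\omega\|_{L^2}$ coming from the Gagliardo--Nirenberg/Sobolev inequality (the Nehari constraint cannot be met by vanishingly small functions) are combined to rule out $\phi_\omega \to 0$. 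Hence $\psi \in \scA_0$, so $S_0(\psi) \ge d(0)$; on the other hand, weak lower semicontinuity of $S_0$ together with $\liminf_\omega S_0(\phi_\omega) = \liminf_\omega\big(S_\omega(\phi_\omega) - \tfrac{\omega}{2}\|\phi_\omega\|_{L^2}^2\big) = d(0)$ gives $S_0(\psi)\le d(0)$. Thus $\psi$ is a ground state, and by the uniqueness in Proposition~\ref{prop:1.1} (positivity and radial symmetry are preserved in the limit) $\psi = \phi_0$.

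Strong convergence in $\dot H^1$ then follows from norm convergence: $S_0(\phi_\omega) \to S_0(\phi_0)$ together with $L^{p+1}$ and $L^{q+1}$ convergence forces $\|\nabla\phi_\omega\|_{L^2} \to \|\nabla\phi_0\|_{L^2}$, and in a Hilbert space weak convergence plus norm convergence yields strong convergence; strong convergence in $L^{p+1}$ is handled similarly (or directly from the compactness already used). Finally, under assumption \eqref{eq:1.8}, one upgrades to $H^1$ convergence: here $\phi_0 \in L^2$ by Corollary~\ref{cor:1.4}, and one must show $\|\phi_\omega\|_{L^2} \to \|\phi_0\|_{L^2}$ with no mass escaping to infinity. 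The natural tool is the uniform pointwise decay of $\phi_\omega$: a Veron-type barrier argument, uniform in small $\omega$, should give $\phi_\omega(x) \lesssim \langle x\rangle^{-2/(p-1)}$ (matching the $\omega=0$ rate of Proposition~\ref{prop:1.2}) for all small $\omega$, and since \eqref{eq:1.8} is exactly the condition making this majorant lie in $L^2$, dominated convergence closes the argument. Once $\|\phi_\omega\|_{L^2}\to\|\phi_0\|_{L^2}$, strong $L^2$ convergence follows from weak convergence plus norm convergence, and combined with the $\dot H^1$ statement this gives strong $H^1$ convergence.

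The main obstacle is the last step: obtaining a pointwise decay bound on $\phi_\omega$ that is \emph{uniform} as $\omega\downarrow 0$, so that one can dominate $\|\phi_\omega\|_{L^2}$ uniformly and pass to the limit without loss of mass. The subcritical, non-$L^2$-compact nature of $\dot H^1 \cap L^{p+1}$ means $H^1$-convergence is genuinely more than $\dot H^1\cap L^{p+1}$-convergence, and it is precisely the borderline cases in \eqref{eq:1.8} (e.g.\ $p = 1+4/N$ when $N=4$) where the decay estimate must be handled with care.
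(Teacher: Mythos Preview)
Your overall architecture matches the paper's: monotonicity of $d(\omega)$, a uniform $\dot H^1\cap L^{p+1}$ bound, radial compactness to extract a nontrivial weak limit, identification via uniqueness of positive radial solutions, upgrade to strong convergence, and finally a uniform pointwise decay bound to get $L^2$ convergence under \eqref{eq:1.8}. You also correctly flag the uniform decay as the real work and the borderline case ($N=4$, $p=p^*=1+4/N$) as delicate --- the paper indeed treats that case separately with a logarithmically corrected supersolution (Proposition~\ref{prop:3.7}).

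There are, however, two places where the sketch is looser than it looks. First, radial compactness in $\dot H^1\cap L^{p+1}$ only yields strong $L^r$ convergence for $r$ \emph{strictly} between $p+1$ and $2^*$ (Proposition~\ref{prop:A.1}); the endpoint $L^{p+1}$ is not compact. So ``strong convergence in $L^{p+1}$ \dots directly from the compactness already used'' is unavailable, and with it the route to $\|\nabla\phi_\omega\|_{L^2}\to\|\nabla\phi_0\|_{L^2}$ via ``$S_0(\phi_\omega)\to S_0(\phi_0)$ plus $L^{p+1}$ convergence''. The paper closes this gap by a Br\'ezis--Lieb argument applied to the Nehari functional: from $K_\omega(\phi_\omega)=0$ one has $K_0(\phi_\omega)=-\omega\|\phi_\omega\|_{L^2}^2\le 0$, and Br\'ezis--Lieb together with $K_0(\phi_0)=0$ gives $\limsup K_0(\phi_\omega-\phi_0)\le 0$. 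Combined with the strong $L^{q+1}$ convergence this forces $\|\nabla(\phi_\omega-\phi_0)\|_{L^2}^2+\|\phi_\omega-\phi_0\|_{L^{p+1}}^{p+1}\to 0$ in one stroke. The sign observation $K_0(\phi_\omega)\le 0$ is the device your outline is missing.

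Second, the claim that ``$d(\omega)\to d(0)$ together with the Pohozaev identities pins down the limiting values and forces $\omega\|\phi_\omega\|_{L^2}^2\to 0$'' is not correct as stated: the Nehari identity, the Pohozaev identity, and the value $d(\omega)=S_\omega(\phi_\omega)$ give only three linear relations among the four quantities $\|\nabla\phi_\omega\|_{L^2}^2$, $\omega\|\phi_\omega\|_{L^2}^2$, $\|\phi_\omega\|_{L^{p+1}}^{p+1}$, $\|\phi_\omega\|_{L^{q+1}}^{q+1}$, so they do not determine all four. In the paper the order is reversed: strong $\dot H^1\cap L^{p+1}$ convergence is proved first, and \emph{then} $\omega\|\phi_\omega\|_{L^2}^2=-K_0(\phi_\omega)\to 0$ drops out as a consequence. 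Likewise, the paper never needs the upper bound $\limsup d(\omega)\le d(0)$ as an input; the strict monotonicity $d(0)<d(\omega)$ (Lemma~\ref{lem:3.1}) alone already yields boundedness and non-vanishing, via the expression $d(\omega)=-\tfrac{p-1}{2(p+1)}\|\phi_\omega\|_{L^{p+1}}^{p+1}+\tfrac{q-1}{2(q+1)}\|\phi_\omega\|_{L^{q+1}}^{q+1}$, and $d(\omega)\to d(0)$ is a byproduct of the convergence rather than a prerequisite.
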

\begin{remark}
\label{rem:1.7}
From the proof of Theorem \ref{thm:1.6} we obtain that $d(\omega)\to d(0)$ as $\omega\downarrow 0$. We note that $\{\|\phi_{\omega}\|_{L^2}\}_{0<\omega <1}$ is unbounded in general, but it is true that $\omega\|\phi_\omega\|_{L^2}^2\to0$ as $\omega\downarrow 0$.
\end{remark}
The connection of solitons with two different decays for \eqref{DNLS} has been studied in \cite{H18, H19a} but the proofs depend on the explicit formulae of solitons, which is not applicable to at least higher-dimensional case in our setting. Here we use variational characterization of ground states effectively for the proof of Theorem \ref{thm:1.6}.

We now state the instability results of the standing waves of \eqref{NLS}. First we consider the case $q\ge1+4/N$. In this case we obtain the following strong instability result.

\begin{theorem}[Strong instability]
\label{thm:1.8}
Let $q\ge1+4/N$. Then the following statements hold:
\begin{itemize}
\item
If $\omega>0$, then the standing wave $e^{i\omega t}\phi_\omega$ of \eqref{NLS} is strongly unstable.
\item
If \eqref{eq:1.8} holds, then the standing wave $\phi_0$ of \eqref{NLS} is strongly unstable.
\item
If \eqref{eq:1.8} does not hold, then the standing wave $\phi_0$ of \eqref{NLS} is strongly unstable in the following sense:
For any $\eps>0$ there exists $u_0\in H^1(\R^N)$ such that $\|u_0-\phi_0\|_{\dot H^1\cap L^{p+1}}<\eps$, and the solution $u(t)$ of \eqref{NLS} blows up in finite time.
\end{itemize}
\end{theorem}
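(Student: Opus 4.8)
The plan is to prove strong instability via the classical Berestycki--Cazenave argument, exploiting the virial identity together with a variational characterization of the ground state that makes the energy drop below the ground-state action along a suitable scaling curve. Concretely, for $u_0 \in H^1(\R^N)$ (and more generally in $\dot H^1 \cap L^{p+1}$ when \eqref{eq:1.8} fails), set $u_0^\lambda(x) := \lambda^{N/2} \phi_\omega(\lambda x)$ for $\lambda > 1$ close to $1$. First I would show that $\lambda \mapsto S_\omega(u_0^\lambda)$ (or $E$ in the zero-mass case) is strictly decreasing for $\lambda \geq 1$ and that the dilation functional $P(u_0^\lambda) := \|\nabla u_0^\lambda\|_{L^2}^2 - \frac{N(p-1)}{2(p+1)}\|u_0^\lambda\|_{L^{p+1}}^{p+1} + \frac{N(q-1)}{2(q+1)}\|u_0^\lambda\|_{L^{q+1}}^{q+1}$ stays negative for $\lambda > 1$; here the sign condition $q \geq 1 + 4/N$ is exactly what guarantees that the $L^{q+1}$ coefficient dominates so that $P$ becomes negative under dilation (the $N=q=1+4/N$ borderline needs the $L^{q+1}$ term to push things across, using that $\phi_\omega$ is a genuine ground state, not identically the pure-power soliton). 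The key variational input I would isolate as a lemma: if $v \in H^1$ (resp.\ $\dot H^1 \cap L^{p+1}$) satisfies $P(v) < 0$ and $S_\omega(v) < d(\omega)$ (resp.\ $E(v) < d(0)$), then $\sup_{t} P(u(t)) < 0$ along the flow, because the set $\{P < 0,\, S_\omega < d(\omega)\}$ is invariant — one shows that if $P(u(t_0)) = 0$ then $S_\omega(u(t_0)) \geq d(\omega)$ by the Nehari-type minimization revisited in Section \ref{sec:2}.

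Granting the invariant set, strong instability follows from the virial identity: for $u_0 \in H^1$ with $|x| u_0 \in L^2$, the variance $V(t) := \int |x|^2 |u(t)|^2\,dx$ satisfies $V''(t) = 8 P(u(t)) \leq 8 \sup_s P(u(s)) < 0$, forcing $V(t) \to 0$ in finite time and hence blowup by the conservation of charge and the uncertainty principle (or directly by $\|\nabla u(t)\|_{L^2} \to \infty$). Since $u_0^\lambda \to \phi_\omega$ in $H^1$ as $\lambda \downarrow 1$ (and in $\dot H^1 \cap L^{p+1}$ in the massless case), and $u_0^\lambda$ can be taken with finite variance, this yields blowing-up data arbitrarily close to $\phi_\omega$, which is precisely strong instability in the sense of Definition \ref{def:1.5} (and in the weaker $\dot H^1 \cap L^{p+1}$ sense when \eqref{eq:1.8} fails).

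For the massless case $\omega = 0$ a little extra care is needed. When \eqref{eq:1.8} holds, $\phi_0 \in H^1$ and the argument runs verbatim with $S_0$ replaced by $E$ on $H^1$, using that $d(0) = E(\phi_0)$. When \eqref{eq:1.8} fails, $\phi_0 \notin L^2$, so I would instead approximate: either work directly with $u_0^\lambda = \lambda^{N/2}\phi_0(\lambda x)$, which lies in $\dot H^1 \cap L^{p+1}$ but not $H^1$, and invoke a local theory / virial argument adapted to that class, or — cleaner — use Theorem \ref{thm:1.6} to replace $\phi_0$ by $\phi_\omega$ for small $\omega > 0$, apply the $\omega > 0$ strong instability, and note that $\phi_\omega \to \phi_0$ in $\dot H^1 \cap L^{p+1}$; combining with the dilation perturbation of $\phi_\omega$ produces, for any $\eps > 0$, data within $\eps$ of $\phi_0$ in $\dot H^1 \cap L^{p+1}$ that blow up. I would also need $\omega \|\phi_\omega\|_{L^2}^2 \to 0$ (Remark \ref{rem:1.7}) to control the action/energy comparison uniformly in the limit.

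The main obstacle I anticipate is the sharp sign analysis of $P$ and the energy monotonicity at the borderline $q = 1 + 4/N$ (and, when $N \geq 3$, near the Sobolev exponent where $\phi_0$ has slow decay): away from the $L^2$-critical threshold the scaling $P(u_0^\lambda) < 0$ for $\lambda>1$ is immediate, but at $q = 1+4/N$ the naive computation gives $P(u_0^\lambda) = (1-\lambda^2)(\text{stuff}) + (\text{$L^{q+1}$ term})$ with the critical term being scale-invariant in the relevant power, so one must use the specific Pohozaev/Nehari identities satisfied by $\phi_\omega$ — i.e.\ that it solves \eqref{EL} and saturates \eqref{GN} — to extract strict negativity. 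The second delicate point is justifying the virial identity and the $V(t)\to 0$ conclusion for the massless, non-$H^1$ data in the third bullet; handling this rigorously (rather than by the Theorem \ref{thm:1.6} reduction) would require a truncated-variance argument, which I would only sketch.
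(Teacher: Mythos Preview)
Your overall strategy is right, but there is a genuine gap in the virial step. Invariance of the set $\{P<0,\ S_\omega<d(\omega)\}$ only gives $P(u(t))<0$ for each $t$; it does \emph{not} yield $\sup_t P(u(t))<0$, and without a uniform negative upper bound the inequality $V''(t)=8P(u(t))$ does not force $V$ to vanish in finite time (a strictly concave positive function can stay positive forever). Your proposed justification for invariance --- ``if $P(u(t_0))=0$ then $S_\omega(u(t_0))\ge d(\omega)$'' --- would moreover require that $d(\omega)$ equals the infimum of $S_\omega$ on the Pohozaev manifold $\{P=0\}$, which is a \emph{different} variational problem from the Nehari characterization $d(\omega)=\inf\{S_\omega:K_\omega=0\}$ proved in Section~\ref{sec:2}; the paper explicitly avoids solving any new minimization on $\{P=0\}$. (Incidentally, the signs of the $L^{p+1}$ and $L^{q+1}$ terms in your formula for $P$ are swapped: the defocusing $|u|^{p-1}u$ enters with a $+$.)

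The paper closes both issues at once, following Le~Coz and Ohta--Yamaguchi: it proves directly that $\tfrac12 P(v)\le S_\omega(v)-\mu(\omega)$ whenever $P(v)\le 0$, using only the Nehari characterization. The trick is to find $\lambda_0$ with $K_\omega(v^{\lambda_0})=0$ (Lemma~\ref{lem:4.2}) and observe that the auxiliary function $\lambda\mapsto S_\omega(v^\lambda)-\tfrac{\lambda^2}{2}P(v)$ is maximized at $\lambda=1$; this gives invariance \emph{and} the quantitative bound $P(u(t))\le 2(S_\omega(u_0)-\mu(\omega))<0$ simultaneously. There is also a difference in the choice of perturbed initial data: the paper uses the amplitude scaling $\lambda\phi_\omega$ (followed by a compactly supported cutoff $\chi_R$ to land in $\Sigma$) rather than your $L^2$-preserving dilation $\phi_\omega^\lambda$. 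With amplitude scaling, both $S_\omega(\lambda\phi_\omega)<\mu(\omega)$ and $P(\lambda\phi_\omega)<0$ for $\lambda>1$ follow from an elementary one-variable polynomial argument, with no borderline analysis at $q=1+4/N$ needed. For the third bullet the paper simply takes $\chi_{R(\lambda)}\lambda\phi_0\in H^1\cap\Sigma$, which converges to $\phi_0$ in $\dot H^1\cap L^{p+1}$; your route via Theorem~\ref{thm:1.6} would also work but is more circuitous.
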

\begin{remark}
\label{rem:1.9}
If $\phi_0\notin L^2(\R^N)$, we need to take the weaker topology of distance between $u_0$ and $\phi_0$ than the other cases. The space $\dot H^1(\R^N)\cap L^{p+1}(\R^N)$ naturally arises in variational problems for \eqref{EL} on the zero mass case; see the definition of the action functional $S_0$.
\end{remark}
\begin{remark}
\label{rem:1.10}
As we remarked in Section \ref{sec:1.2}, the solitons of \eqref{DNLS} are closely related to the standing waves of \eqref{NLS} when $p=3$, $q=5$, and $N=1$. It was proven in \cite{CO06} that the soliton $\phi_{\omega ,c}$ for \eqref{DNLS} is stable if $\omega >c^2/4$. On the other hand the stability/instability of $\phi_{\omega,c}$ for $c=2\sqrt{\omega}$ (zero mass case) has been an open problem.
\end{remark}
\begin{remark}
\label{rem:1.11}
Cheng, Miao, and Zhao~\cite{CMZ16} obtained partial results of Theorem~\ref{thm:1.8} only in the case of $p=1+4/N<q$ and $\omega>0$.
Here we establish strong instability for all cases of $(p,q,\omega)$ satisfying $p<q$, $q\ge1+4/N$, and $\omega\ge0$ including the zero mass case.
\end{remark}
For the proof of Theorem \ref{thm:1.8} we use the arguments developed in \cite{L08, OY15}, 
which are improvement of the method introduced in \cite{BC81} and still work for the zero mass case. The important point in this approach is that we do not need to solve new variational problems on a virial type functional; instead we take advantage of the variational characterization on the existence theory of the ground states.


Next we consider the case $q<1+4/N$.
It follows from the standard argument (see, e.g., \cite[Chapter 6]{C}) that for any initial data $u_0 \in H^1(\R^N)$ the corresponding solution of \eqref{NLS} exists globally in time and
$\sup\{ \| u(t)\|_{H^1} : t\in\R  \}<\infty$, so in particular the strongly instability does not occur in this case. Also, by the perturbation arguments as stated in Section \ref{sec:1.3}, one can prove that 
the standing wave $e^{i\omega t}\phi_{\omega}$ is stable for large $\omega$ (see \cite{F03}). Therefore, it is more delicate to establish the instability results in the case $q<1+4/N$ than the case $q\geq 1+4/N$.

In the following theorem we establish a new instability condition. Our result not only improves the previous results in one-dimensional case but also gives a first result on instability for the case $q<1+4/N$ in higher dimensions.
\begin{theorem}[Instability for small $\omega$]
\label{thm:1.12}
Let $(p,q)$ satisfy 
\begin{equation}
\label{eq:1.12}
1<p<q<1+\frac{4}{N},\quad\gamma_N(p)<q,
\end{equation}
where $\gamma_N(p)$ is defined by
\begin{align}
\label{eq:1.13}
\gamma_N(p):=\frac{16+N^2+6N-pN(N+2)}{N\bigl(N+2-(N-2)p\bigr)}.
\end{align}
Then there exists $\omega_0>0$ such that the standing wave $e^{i\omega t}\phi_\omega$ of \eqref{NLS} is unstable for all $\omega\in[0,\omega_0]$.
\end{theorem}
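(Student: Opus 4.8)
The plan is to establish instability by combining the variational/virial machinery with the connection to the zero-mass ground state $\phi_0$ furnished by Theorem~\ref{thm:1.6}. The guiding heuristic is the standard Grillakis--Shatah--Strauss criterion: $e^{i\omega t}\phi_\omega$ is unstable when $\del_\omega\|\phi_\omega\|_{L^2}^2<0$. Since we cannot compute this derivative explicitly in higher dimensions, I would instead work with a virial-type quantity and a modulation/monotonicity argument. Concretely, for a solution $u(t)$ starting near $\phi_\omega$, define the localized virial $V_R(t)=\int \chi_R(x)|u(t,x)|^2\,dx$ (or the full virial $\int |x|^2|u|^2$ when $u_0$ has finite variance), whose second derivative is controlled by a functional $P(u)$ closely related to $\partial_\lambda S_\omega(\lambda u)$ at $\lambda=1$. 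The key is to show that, for $\omega$ small, one can choose an initial perturbation $u_0^\eps$ of $\phi_\omega$ inside the ``bad'' side of the relevant constraint set so that $P(u(t))$ stays bounded away from zero, yet the conserved energy forces $V_R$ to exhibit behavior incompatible with the orbit staying near $\{e^{i\theta}\phi_\omega(\cdot-y)\}$.

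The crucial new input, and the reason the hypothesis \eqref{eq:1.12} appears, is a sign computation at $\omega=0$. I would first prove instability \emph{at} $\omega=0$ by a scaling argument on the zero-mass variational problem: the dilation $\phi_0^\lambda(x)=\lambda^{N/2}\phi_0(\lambda x)$ (which preserves the $L^2$ norm) and a second scaling adjusting the $\dot H^1$ versus $L^{p+1}$ balance let one compute $\frac{d^2}{d\lambda^2}S_0(\phi_0^\lambda)$ explicitly in terms of the Pohozaev/Nehari identities for \eqref{EL} with $\omega=0$; the condition $\gamma_N(p)<q$ is exactly the condition that this second variation, equivalently the virial functional $\int|\nabla\phi_0|^2 - c\int\phi_0^{q+1}$ evaluated after the charge-preserving scaling, is negative. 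This is the computational heart of the theorem: one writes $\|\nabla\phi_0\|_{L^2}^2$, $\|\phi_0\|_{L^{p+1}}^{p+1}$, $\|\phi_0\|_{L^{q+1}}^{q+1}$ in terms of two independent quantities using the two identities obtained from $S_0'(\phi_0)=0$ tested against $\phi_0$ and against $x\cdot\nabla\phi_0$, and checks that the resulting quadratic form in the scaling parameter is indefinite precisely when \eqref{eq:1.13} holds. Note $\phi_0$ need not be in $L^2$, but the charge-preserving scaling only uses $\dot H^1\cap L^{p+1}$ regularity, and when \eqref{eq:1.8} fails we would state instability in the weaker topology as in Theorem~\ref{thm:1.8}; I would first treat the case \eqref{eq:1.8} holds so that $\phi_0\in H^1$ and the usual instability framework applies verbatim.

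Next I would upgrade from $\omega=0$ to small $\omega>0$ by a continuity/perturbation argument built on Theorem~\ref{thm:1.6}. Because $\phi_\omega\to\phi_0$ strongly in $H^1$ as $\omega\downarrow0$ (under \eqref{eq:1.8}) and $d(\omega)\to d(0)$ (Remark~\ref{rem:1.7}), the relevant virial functional evaluated on (the charge-preserving rescaling of) $\phi_\omega$ is a continuous function of $\omega$ at $\omega=0$; since it is strictly negative at $\omega=0$ by the previous step, it remains negative for $\omega\in[0,\omega_0]$ for some $\omega_0>0$. Then the standard instability mechanism of \cite{BC81, L08, OY15}, already used for Theorem~\ref{thm:1.8}, applies with the virial identity: one constructs $u_0^\eps=\phi_\omega^{\lambda_\eps}$ (a slightly scaled ground state) with $S_\omega(u_0^\eps)<d(\omega)$, $\|u_0^\eps\|_{L^2}=\|\phi_\omega\|_{L^2}$, and $P_\omega(u_0^\eps)<0$, shows $P_\omega(u(t))$ stays negative and bounded away from $0$ using the conservation laws and the variational characterization of $d(\omega)$ on the Nehari manifold (Section~\ref{sec:2}), and concludes that $\|u(t)\|_{H^1}$ must leave every tube around the orbit. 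Since $q<1+4/N$ precludes finite-time blowup, one instead derives a contradiction with stability directly from the monotonicity of $V_R(t)$.

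The main obstacle I anticipate is twofold. First, the negativity computation: one must verify that the quadratic form in the scaling parameter is genuinely indefinite, which requires carefully combining the Nehari identity ($\|\nabla\phi_0\|_{L^2}^2+\|\phi_0\|_{L^{p+1}}^{p+1}=\|\phi_0\|_{L^{q+1}}^{q+1}$) and the Pohozaev identity, and then checking that the discriminant/sign condition reduces exactly to $\gamma_N(p)<q$ with $\gamma_N$ as in \eqref{eq:1.13}; this is where an arithmetic slip is easiest and where the specific form of $\gamma_N(p)$ must fall out. Second, the instability-without-blowup argument: since strong instability fails for $q<1+4/N$, one cannot simply run the blowup argument of \cite{BC81}; one needs a genuine orbital-instability argument (in the spirit of \cite{GSS87} but without the spectral hypotheses), typically via a localized virial quantity $V_R$ and a careful choice of $R$ depending on $\eps$, using that if the orbit stayed $\eps$-close to the ground-state manifold then $P_\omega(u(t))$ would be close to $P_\omega(\phi_\omega)=0$, contradicting $P_\omega(u(t))\le -\delta<0$; controlling the error terms from the localization $\chi_R$ against the (possibly only algebraic, if $\omega$ is tiny) decay of the profile is the delicate technical point.
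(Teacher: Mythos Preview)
Your proposal is correct and follows essentially the same route as the paper: compute $\partial_\lambda^2 S_0(\phi_0^\lambda)\big|_{\lambda=1}$ via the Nehari and Pohozaev identities to see it is negative exactly when $\gamma_N(p)<q$, extend to small $\omega>0$ by the continuity furnished by Theorem~\ref{thm:1.6}, and then run a virial-based instability argument (the paper follows Ohta~\cite{O95ds}). One simplification relative to what you anticipate: the paper avoids the localized virial and the associated decay worries by taking $u_0=\chi_R\phi_\omega^\lambda\in\Sigma$ (cutoff only on the initial datum), using the full virial identity $\frac{d^2}{dt^2}\|xu(t)\|_{L^2}^2=8P(u(t))$, and deriving the uniform bound $P(u(t))\le -m$ while $u(t)$ stays in the tube from the Nehari-based inequality $\mu(\omega)\le S_\omega(v)+(\Lambda(v)-1)P(v)$ obtained via the implicit function theorem on $K_\omega$; since $p<q<1+4/N$ forces $p<1+4/N$, condition~\eqref{eq:1.8} is automatic here and $\phi_0\in H^1$, so no algebraic-decay issue arises.
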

\begin{remark}
\label{rem:1.13}
By elementary calculations the curve $(1,2^*-1)\ni p\mapsto\gamma_N(p)$ has the following properties
(see also Figure~\ref{fig:1}):
%
\begin{figure}
\begin{minipage}[b]{0.32\linewidth}
\includegraphics[width=\linewidth]{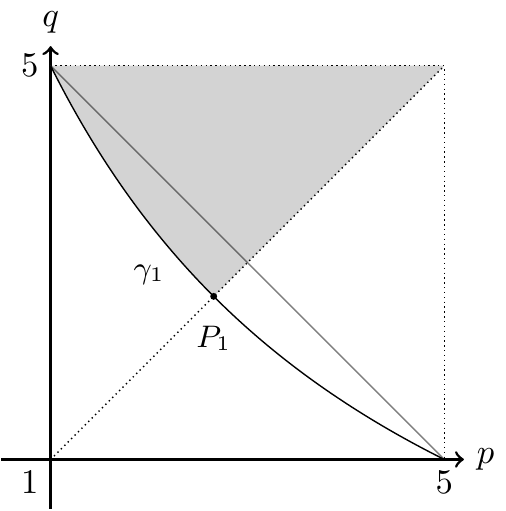}
\subcaption{$N=1$}
\end{minipage}
\begin{minipage}[b]{0.32\linewidth}
\includegraphics[width=\linewidth]{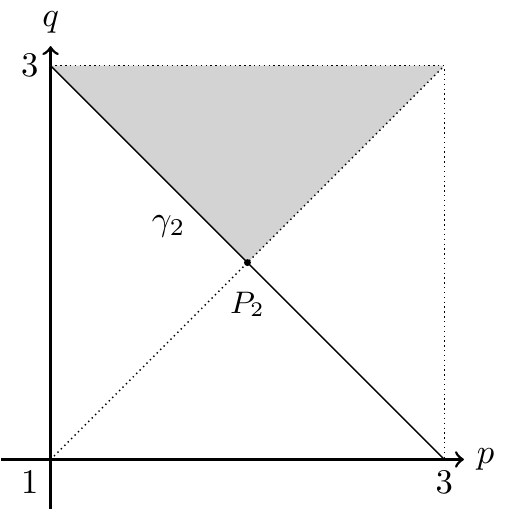}
\subcaption{$N=2$}
\end{minipage}
\begin{minipage}[b]{0.32\linewidth}
\includegraphics[width=\linewidth]{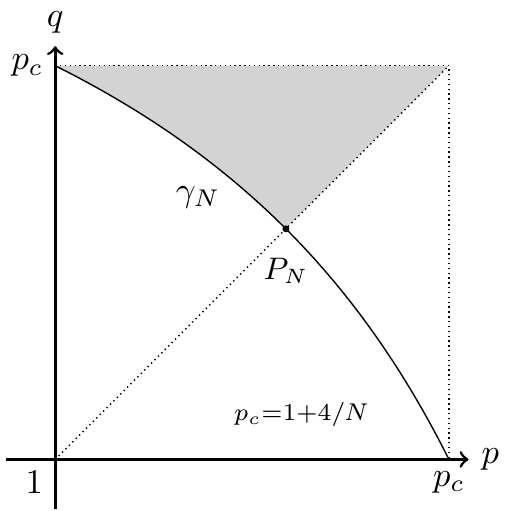}
\subcaption{$N\geq 3$}
\end{minipage}
\caption{The region represented by the condition \eqref{eq:1.12}.}
\label{fig:1}
\end{figure}
\begin{itemize}
\item
The curve $q=\gamma_N(p)$ is symmetric about the line $q=p$.
\item
$\gamma_N'(p)=-\frac{32}{N (N+2-(N-2)p)^2}<0$, and so that $\gamma_N$ is strictly decreasing.
\item
$\gamma_N''(p)=-\frac{64(N-2)}{N (N+2-(N-2)p)^3}$.
In particular, the curve $\gamma_N$ is convex if $N=1$, straight if $N=2$, and concave if $N\ge 3$.
\item
$\gamma_N(1)=1+4/N$ and $\gamma_N(1+4/N)=1$.
\item
The curve $q=\gamma_N(p)$ and the line $q=p$ have a unique intersection point $P_N=(p_N,p_N)$, where
\begin{equation}\label{eq:1.14}
p_N:=\frac{N+\sqrt{2N}+4}{\sqrt{N}(\sqrt{N}+\sqrt{2})}
=\frac{\sqrt{N}(N+2)-4\sqrt{2}}{\sqrt{N}(N-2)}.
\end{equation}
\end{itemize}
\end{remark}
\begin{remark}
\label{rem:1.14}
By using $p_N$ defined in \eqref{eq:1.14}, we can rewrite the condition~\eqref{eq:1.12} as
\begin{align}
\label{eq:1.15}
1<p<p_N\ \text{and}\ \gamma_N(p)<q<1+\frac{4}{N},~\text{or}~~p_N\leq p<q<1+\frac{4}{N}.
\end{align}
In particular when $N=1$, \eqref{eq:1.15} is rewritten as
\begin{align}
\label{eq:1.16}
1<p<4\sqrt{2}-3\ \text{and}\ \dfrac{23-3p}{3+p}<q<5,
~\text{or}~~4\sqrt{2}-3\le p<q<5.
\end{align}
We note that $\frac{23-3p}{3+p}<6-p$ for all $p\in(1,5)$ (see also Figure~\ref{fig:1} (a)). 
This means that the condition $1<p<q<5$ and $p+q>6$ in \cite{O95dp} is strictly stronger than our condition \eqref{eq:1.16}.
\end{remark}

Let us explain the strategy of proof of Theorem~\ref{thm:1.12}.
First, by following the argument of Ohta~\cite{O95ds}, for $\omega\geq 0$ we establish the following instability result (see Proposition \ref{prop:5.1}):
\begin{align}
\label{eq:1.17}
\bigl.\del_\lambda^2 S_{\omega}(\phi^{\lambda}_{\omega})\bigr|_{\lambda =1}<0
\;\Longrightarrow\; e^{i\omega t}\phi_\omega~\text{is unstable},
\end{align}
where $\phi_{\omega}^{\lambda}(x):=\lambda^{N/2}\phi_{\omega}(\lambda x)$ for $\lambda>0$. The proof of this result is based on the variational characterization of ground states and the virial identity. For the zero mass case we need to modify the argument by using suitable cut-off functions.

For positive $\omega>0$, it is difficult to check the condition in \eqref{eq:1.17}.
On the other hand, for $\omega=0$, one can exactly rewrite the condition in \eqref{eq:1.17} as
\begin{align}
\label{eq:1.18}
\bigl.\del_\lambda^2 S_{0}(\phi^{\lambda}_{0})\bigr|_{\lambda =1} <0\iff \gamma_N(p)<q.
\end{align}
Therefore, \eqref{eq:1.18} implies the instability of algebraic standing waves $\phi_0$ under the condition \eqref{eq:1.12}. Moreover, combining these facts with the convergence result of Theorem~\ref{thm:1.6}, we obtain the instability of standing waves for sufficiently small $\omega$. 

The proof of instability of the standing waves for small $\omega >0$ can be regarded as a certain perturbation argument. However, a major difference from perturbation arguments in previous works is that 
we obtain a new information from algebraic standing waves, whose properties have been less understood, not from well-known standing waves for the pure power nonlinearity.
We emphasize that the new condition \eqref{eq:1.12} is obtained for the first time when we focus on algebraic standing waves.

\subsection{Outline of the paper}
The rest of this paper is organized as follows.
In Section \ref{sec:2.1} we give a variational characterization of the ground states on the Nehari manifold, and then, in Section \ref{sec:2.2} we complete the proof of Proposition \ref{prop:1.1} by using the results of symmetry and uniqueness.
In Section~\ref{sec:3} we study the connection between two types of standing waves and prove Theorem \ref{thm:1.6}. In Section~\ref{sec:4} we study strong instability of standing waves for the case $q\ge1+4/N$ and prove Theorem \ref{thm:1.8}. Finally, in Section~\ref{sec:5} we study instability of standing waves for the case $q<1+4/N$ and prove Theorem \ref{thm:1.12}.
For the reader's convenience, in Appendix~\ref{sec:A}, we give a radial compactness lemma on the function space $\dot{H}^1(\R^N)\cap L^{p+1}(\R^N)$.

\section{Properties of ground states}
\label{sec:2}
\subsection{Variational characterization}
\label{sec:2.1}
In this subsection we give a variational characterization of ground states. 
For convenience, we use the notation
\begin{equation*}
X=X_{\omega}
:=\left\{
\begin{aligned}
&\dot{H}^1(\R^N)\cap L^{p+1}(\R^N) & & \text{if $\omega=0$},\\
&H^1(\R^N) & & \text{if $\omega>0$}.
\end{aligned}
\right.
\end{equation*}
We define the Nehari functional on $X$ by
\begin{align*}
K_{\omega}(v) :=\|\nabla v\|_{L^2}^2 +\omega\| v\|_{L^2}^2 +\| v\|_{L^{p+1}}^{p+1} -\| v\|_{L^{q+1}}^{q+1}.
\end{align*}
We consider the minimization problem on the Nehari manifold. 
For $\omega\geq 0$ we set
\begin{align*}
\scK_{\omega}&:=\{ v\in X_\omega\setminus\{ 0\} : K_{\omega}(v)=0\}
\end{align*}
and 
\begin{align*}
\mu(\omega)&:=\inf\{ S_{\omega}(v) :v\in\scK_{\omega}\},\\
\notag
\scM_{\omega}&:=\{ v\in\scK_{\omega} :S_{\omega}(v)=\mu (\omega)\}.
\end{align*}
In what follows in this subsection, we prove the following theorem by classical variational arguments.
\begin{theorem}
\label{thm:2.1}
Let $\omega\geq 0$. Then we have 
\begin{align*}
\scG_{\omega}=\scM_{\omega}\neq\emptyset,
\end{align*}
and $d(\omega)=\mu (\omega)>0$.
\end{theorem}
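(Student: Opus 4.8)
The plan is to prove Theorem~\ref{thm:2.1} by the classical Nehari manifold method, establishing the chain of (in)equalities $d(\omega)=\mu(\omega)$ together with $\mu(\omega)>0$ and the coincidence of minimizer sets. I would organize the argument in four steps.

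\textbf{Step 1: $\mu(\omega)>0$ and that $\scK_\omega$ is bounded away from $0$.}
For $v\in\scK_\omega$ the constraint $K_\omega(v)=0$ gives $\|\nabla v\|_{L^2}^2+\omega\|v\|_{L^2}^2+\|v\|_{L^{p+1}}^{p+1}=\|v\|_{L^{q+1}}^{q+1}$. Using the Gagliardo--Nirenberg inequality \eqref{GN} (for $\omega=0$) or the standard Sobolev/interpolation embeddings on $H^1$ (for $\omega>0$) to bound $\|v\|_{L^{q+1}}^{q+1}$ by a product of lower-order norms with a power strictly larger than $1$ in the relevant quantity, one deduces a lower bound $\|v\|_X\ge c>0$ uniformly on $\scK_\omega$. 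Then rewriting $S_\omega(v)=S_\omega(v)-\tfrac12 K_\omega(v)=\bigl(\tfrac12-\tfrac1{p+1}\bigr)\|v\|_{L^{p+1}}^{p+1}+\bigl(\tfrac1{q+1}-\tfrac12\bigr)\|v\|_{L^{q+1}}^{q+1}$ — wait, signs must be handled with care; the correct move is $S_\omega(v)-\tfrac1{q+1}K_\omega(v)=\bigl(\tfrac12-\tfrac1{q+1}\bigr)(\|\nabla v\|_{L^2}^2+\omega\|v\|_{L^2}^2)+\bigl(\tfrac1{p+1}-\tfrac1{q+1}\bigr)\|v\|_{L^{p+1}}^{p+1}$, which is a sum of nonnegative terms since $p<q$, controlling $S_\omega$ from below by a positive multiple of $\|v\|_X^2$ (at least for $\omega>0$; for $\omega=0$ one gets control of $\|\nabla v\|_{L^2}^2+\|v\|_{L^{p+1}}^{p+1}$), hence $\mu(\omega)\ge c'>0$.

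\textbf{Step 2: $\scM_\omega\neq\emptyset$.}
Take a minimizing sequence $(v_n)\subset\scK_\omega$ for $S_\omega$. The coercivity just obtained shows $(v_n)$ is bounded in $X$. Pass to a weak limit; symmetrize (replace $v_n$ by its Schwarz rearrangement, which only decreases $\|\nabla v_n\|_{L^2}$ and preserves the $L^r$ norms) so that the limit is radial, and invoke the compact embedding of radial functions — the usual Strauss lemma for $\omega>0$, and the radial compactness lemma on $\dot H^1\cap L^{p+1}$ promised in Appendix~\ref{sec:A} for $\omega=0$ — to get strong convergence in $L^{p+1}$ and $L^{q+1}$. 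Lower semicontinuity of the remaining (quadratic) terms then forces $K_\omega(v_\infty)\le0$; if $K_\omega(v_\infty)<0$ one rescales $v_\infty^t:=t\,v_\infty$ (or $v_\infty^\lambda:=\lambda^{N/2}v_\infty(\lambda\cdot)$) to land back on $\scK_\omega$ with strictly smaller action, a contradiction — so $K_\omega(v_\infty)=0$, $v_\infty\neq0$ by Step~1, and $S_\omega(v_\infty)=\mu(\omega)$. This identifies a minimizer.

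\textbf{Step 3: $\scM_\omega\subset\scG_\omega$ and $\mu(\omega)\ge d(\omega)$.}
A minimizer $v$ of $S_\omega$ on the constraint $\scK_\omega=\{K_\omega=0\}$ satisfies, by Lagrange multipliers, $S_\omega'(v)=\nu K_\omega'(v)$ for some $\nu\in\R$. Pairing with $v$ and using $K_\omega(v)=0$ together with $\langle K_\omega'(v),v\rangle=2(\|\nabla v\|_{L^2}^2+\omega\|v\|_{L^2}^2)+(p+1)\|v\|_{L^{p+1}}^{p+1}-(q+1)\|v\|_{L^{q+1}}^{q+1}$, which by the constraint equals $(p-1)\|v\|_{L^{p+1}}^{p+1}-(q-1)\|v\|_{L^{q+1}}^{q+1}<0$ (again using $\|v\|_X>0$ and the constraint to see this is nonzero and negative), one gets $\nu=0$, so $v\in\scA_\omega$. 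Hence $\scM_\omega\subset\scA_\omega$, giving $d(\omega)\le S_\omega(v)=\mu(\omega)$.

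\textbf{Step 4: $d(\omega)\ge\mu(\omega)$, hence equality and $\scG_\omega=\scM_\omega$.}
Every $w\in\scA_\omega$ satisfies $S_\omega'(w)=0$, so in particular $\langle S_\omega'(w),w\rangle=K_\omega(w)=0$, i.e.\ $\scA_\omega\subset\scK_\omega$; therefore $S_\omega(w)\ge\mu(\omega)$ for all $w\in\scA_\omega$, giving $d(\omega)\ge\mu(\omega)$. Combined with Step~3 this yields $d(\omega)=\mu(\omega)>0$. Finally, $\scG_\omega$ consists exactly of those $w\in\scA_\omega$ with $S_\omega(w)=d(\omega)=\mu(\omega)$; since $\scA_\omega\subset\scK_\omega$, such $w$ minimize $S_\omega$ on $\scK_\omega$, so $\scG_\omega\subset\scM_\omega$, and the reverse inclusion is Step~3. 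Hence $\scG_\omega=\scM_\omega\neq\emptyset$.

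I expect the main obstacle to be Step~2 in the zero-mass case $\omega=0$: one is working on the inhomogeneous space $\dot H^1\cap L^{p+1}$ rather than $H^1$, so the usual Strauss radial compactness does not directly apply and must be replaced by the Appendix~\ref{sec:A} lemma; one must also be careful that the Schwarz rearrangement and the weak limit stay in this space and that no mass escapes to spatial infinity or concentrates at a point. A secondary subtlety, present for all $\omega\ge0$, is checking the sign conditions (that $\langle K_\omega'(v),v\rangle<0$ on $\scK_\omega$ and the analogous strict inequalities) so that the Lagrange multiplier vanishes and the rescaling argument in Step~2 genuinely decreases the action; these rest on the uniform lower bound $\|v\|_X\ge c>0$ from Step~1, so that bound is the real workhorse.
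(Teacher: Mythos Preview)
Your plan is sound and reaches the conclusion, but Step~2 takes a genuinely different route from the paper. Where you symmetrize the minimizing sequence and invoke radial compactness, the paper instead applies Lieb's concentration-compactness lemma (Lemma~\ref{lem:2.2}) to extract a nontrivial weak limit after translation, then uses the Br\'ezis--Lieb splitting (Lemma~\ref{lem:2.3}) on $K_\omega$ and $J_\omega$ to upgrade to strong convergence in the full $X_\omega$-norm. Your symmetrization route is more elementary, but watch one imprecision you flagged yourself: for $\omega=0$ the radial lemma in Appendix~\ref{sec:A} (Proposition~\ref{prop:A.1}) gives $L^r$-convergence only for $r\in(p+1,2^*)$, \emph{not} at the endpoint $r=p+1$, so your claimed strong $L^{p+1}$-convergence is unavailable. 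This does not break the argument---the $L^{p+1}$ term carries a positive sign in both $K_0$ and $J_0$, so weak lower semicontinuity suffices to get $K_0(v_\infty)\le0$ and $J_0(v_\infty)\le\mu(0)$---but you should phrase it that way. The paper's approach buys the stronger conclusion that \emph{every} minimizing sequence is precompact modulo translations (Lemma~\ref{lem:2.8}); yours only produces one minimizer, which is all Theorem~\ref{thm:2.1} needs. Steps~1, 3, and~4 match the paper's Lemmas~\ref{lem:2.7}, \ref{lem:2.4}, and~\ref{lem:2.5} with only cosmetic differences in how $\langle K_\omega'(v),v\rangle<0$ is computed.
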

For the proof we prepare two useful lemmas on concentration compactness.
\begin{lemma}[\cite{L83b, BFV14}]
\label{lem:2.2}
Let $p\geq 1$. Let $\{f_n\}$ be a bounded sequence in $\dot{H}^1(\R^N)\cap L^{p+1}(\R^N)$. Assume that there exists $q\in(p,2^*-1)$ such that $\limsup_{n \to \infty} \norm[f_n]_{L^{q+1}}>0$.
Then there exist $\{y_n\}\subset\R^N$ and $f \in \dot{H}^1(\R^N)\cap L^{p+1}(\R^N)\setminus \{0\}$ such that $\{f_n(\cdot-y_n)\}$ has a subsequence that converges to $f$ weakly in $\dot{H}^1(\R^N)\cap L^{p+1}(\R^N)$. 
\end{lemma}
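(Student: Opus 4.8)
I would read the lemma as a \emph{non-vanishing} statement and organise the argument around a Lions-type vanishing alternative. By boundedness and reflexivity, any sequence of translates $\{f_n(\cdot-y_n)\}$ has a weakly convergent subsequence in $\dot H^1\cap L^{p+1}$; the only obstruction to obtaining a \emph{nonzero} limit is \emph{vanishing}, and the hypothesis on the $L^{q+1}$-norm is designed to exclude it. Concretely, set
\[
\delta:=\limsup_{n\to\infty}\ \sup_{y\in\R^N}\int_{B(y,1)}|f_n|^{p+1}\,dx,
\]
and reduce everything to proving $\delta>0$. The decisive implication is
\[
\delta=0\ \Longrightarrow\ \|f_n\|_{L^{q+1}}\to0,
\]
which, against the hypothesis $\limsup_n\|f_n\|_{L^{q+1}}>0$, forces $\delta>0$.

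The main obstacle is this vanishing implication, the analogue of Lions' lemma on the homogeneous space $\dot H^1\cap L^{p+1}$. I would partition $\R^N$ into the lattice of disjoint unit cubes $\{Q_j\}$, so that $\|f_n\|_{L^{q+1}}^{q+1}=\sum_j\|f_n\|_{L^{q+1}(Q_j)}^{q+1}$ exactly, and fix an exponent $m$ with $q+1<m\le2^*$ (finite; for $N\le2$ any finite $m>q+1$ works) for which $H^1(Q)\hookrightarrow L^m(Q)$ holds on the unit cube with a translation-invariant constant. On each cube I would use the H\"older interpolation
\[
\|f_n\|_{L^{q+1}(Q_j)}\le\|f_n\|_{L^{p+1}(Q_j)}^{1-\lambda}\,\|f_n\|_{L^m(Q_j)}^{\lambda},
\qquad \tfrac{1}{q+1}=\tfrac{1-\lambda}{p+1}+\tfrac{\lambda}{m},\quad\lambda\in(0,1),
\]
together with $\|f_n\|_{L^m(Q_j)}\le C\|f_n\|_{H^1(Q_j)}$ and the observation that $p\ge1$ (hence $p+1\ge2$) gives $\|f_n\|_{L^2(Q_j)}\le C\|f_n\|_{L^{p+1}(Q_j)}$ on a unit cube, so that $\|f_n\|_{H^1(Q_j)}\le C\bigl(\|\nabla f_n\|_{L^2(Q_j)}+\|f_n\|_{L^{p+1}(Q_j)}\bigr)$. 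Summing over $j$ and using $\sum_j\|\nabla f_n\|_{L^2(Q_j)}^2=\|\nabla f_n\|_{L^2}^2$ and $\sum_j\|f_n\|_{L^{p+1}(Q_j)}^{p+1}=\|f_n\|_{L^{p+1}}^{p+1}$ (both uniformly bounded), the crux is a bookkeeping step: one chooses the exponents in a further H\"older inequality over $j$ so as to absorb the gradient factor against the uniform gradient bound while factoring out a positive power of $\epsilon_n:=\sup_j\int_{Q_j}|f_n|^{p+1}\,dx$ (each cube is covered by finitely many unit balls, so $\epsilon_n\to0$ when $\delta=0$). Here $q>p$ makes the spare exponent $\tfrac{q-p}{p+1}$ positive, and one distinguishes the cases $\lambda(q+1)\ge2$ and $\lambda(q+1)<2$ for handling the gradient sum; in both one reaches $\|f_n\|_{L^{q+1}}^{q+1}\le C\,\epsilon_n^{\sigma}$ with some $\sigma>0$, giving the claim.

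With $\delta>0$ in hand I would select, along a subsequence, points $y_n$ with $\int_{B(y_n,1)}|f_n|^{p+1}>\delta/2$ and set $g_n:=f_n(\cdot-y_n)$, which remains bounded in $\dot H^1\cap L^{p+1}$; passing to a further subsequence gives $g_n\rightharpoonup f$ weakly in $\dot H^1\cap L^{p+1}$. The final point is $f\ne0$, and this is where the retained local mass is spent: on the fixed ball $B(0,1)$ the restrictions $g_n$ are bounded in $H^1(B(0,1))$ — again using $p+1\ge2$ to dominate the local $L^2$-norm by the $L^{p+1}$-norm — and since $p+1<2^*$, the Rellich--Kondrachov theorem yields a subsequence converging strongly in $L^{p+1}(B(0,1))$, necessarily to $f$. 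Passing to the limit in $\int_{B(0,1)}|g_n|^{p+1}>\delta/2$ gives $\int_{B(0,1)}|f|^{p+1}\ge\delta/2>0$, so $f\ne0$. I expect the vanishing implication to be the only genuinely delicate step; the translation/weak-limit extraction and the Rellich argument are routine.
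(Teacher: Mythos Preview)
The paper does not supply its own proof of this lemma; it is stated with citations to Lieb \cite{L83b} and Bellazzini--Frank--Visciglia \cite{BFV14} and then used as a black box in Lemma~\ref{lem:2.8}. So there is nothing in the paper to compare against directly.

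Your argument is correct. The reduction to $\delta>0$, the cube-by-cube interpolation combined with the local embedding $H^1(Q)\hookrightarrow L^m(Q)$ (using $p\ge1$ to dominate the local $L^2$-norm by the $L^{p+1}$-norm on a unit cube), the summation bookkeeping, and the final Rellich step on a fixed ball are all sound; the case split on $\lambda(q+1)$ you mention is exactly how the gradient sum is handled in the standard Lions vanishing lemma, and the reflexivity of $\dot H^1\cap L^{p+1}$ (as a closed subspace of the reflexive product $L^2(\R^N;\R^N)\times L^{p+1}(\R^N)$ via $f\mapsto(\nabla f,f)$) justifies the weak-limit extraction. What you have written is essentially the adaptation of Lions' lemma to the space $\dot H^1\cap L^{p+1}$, which is the setting of \cite{BFV14}. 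For comparison, Lieb's original mechanism in \cite{L83b} is organized differently: rather than a concentration function, it shows that under the non-vanishing $L^{q+1}$ hypothesis the superlevel sets $\{|f_n|>\varepsilon\}$ have measure uniformly bounded above and below, then translates so that a fixed cube meets this set in positive measure. Both routes are classical and interchangeable here.
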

\begin{lemma}[{\cite{BL83}}]
\label{lem:2.3}
Let $1\leq r < \infty$. Let $\{f_n\}$ be a bounded sequence in $L^r(\R^N)$ and $f_n \to f$ a.e.\ in $\R^N$ as $n\to \infty$. Then 
\begin{align*}
\| f_n\|_{L^r}^r - \| f_n-f\|_{L^r}^r - \| f \|_{L^r}^r \to 0
\end{align*}
as $n \to \infty$. 
\end{lemma}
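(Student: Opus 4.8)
The plan is to reproduce the classical argument of Brézis and Lieb, whose essential device is an elementary pointwise inequality combined with a truncation that makes the dominated convergence theorem applicable. First I would reduce to a stronger $L^1$ statement. Set $g_n := f_n - f$, so that the hypothesis gives $g_n \to 0$ almost everywhere. Since $\{f_n\}$ is bounded in $L^r$ and $f_n \to f$ a.e., Fatou's lemma yields $f \in L^r(\R^N)$, and the triangle inequality then gives a uniform bound $\sup_n \|g_n\|_{L^r}^r =: M < \infty$. It suffices to prove that the pointwise defect $|f_n|^r - |g_n|^r - |f|^r$ tends to $0$ in $L^1(\R^N)$, because
\[
\bigl| \|f_n\|_{L^r}^r - \|g_n\|_{L^r}^r - \|f\|_{L^r}^r \bigr| \le \int_{\R^N} \bigl| \, |f_n|^r - |g_n|^r - |f|^r \bigr| \, dx .
\]

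The key technical ingredient I would record first is the following elementary inequality: for every $\eps > 0$ there is a constant $C_\eps > 0$ such that for all $a, b \in \C$,
\[
\bigl| \, |a+b|^r - |a|^r \bigr| \le \eps |a|^r + C_\eps |b|^r .
\]
This follows from the homogeneous bound $\bigl| \, |a+b|^r - |a|^r \bigr| \le r|b|\,(|a|+|b|)^{r-1}$ (obtained by applying the fundamental theorem of calculus to $t \mapsto |a+tb|^r$), the power-mean estimate $(|a|+|b|)^{r-1} \lesssim |a|^{r-1} + |b|^{r-1}$, and Young's inequality to absorb the cross term $|b|\,|a|^{r-1}$ into $\eps|a|^r + C_\eps|b|^r$.

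The main step applies this with $a = g_n$ and $b = f$, so that $a+b = f_n$, giving $\bigl| \, |f_n|^r - |g_n|^r \bigr| \le \eps|g_n|^r + C_\eps|f|^r$ pointwise. I would then introduce the truncated defect
\[
W_{n,\eps} := \Bigl( \bigl| \, |f_n|^r - |g_n|^r - |f|^r \bigr| - \eps|g_n|^r \Bigr)^{+} .
\]
By the triangle inequality and the displayed bound, $W_{n,\eps} \le (C_\eps + 1)|f|^r$, a fixed $L^1$ majorant independent of $n$; moreover, since $g_n \to 0$ and $f_n \to f$ a.e., the full defect tends to $0$ a.e., hence $W_{n,\eps} \to 0$ a.e. Dominated convergence then forces $\int_{\R^N} W_{n,\eps}\,dx \to 0$. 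Combining with the trivial bound $\bigl| \, |f_n|^r - |g_n|^r - |f|^r \bigr| \le W_{n,\eps} + \eps|g_n|^r$ and integrating, I obtain $\limsup_{n\to\infty} \int_{\R^N} \bigl| \, |f_n|^r - |g_n|^r - |f|^r \bigr| \, dx \le \eps M$; letting $\eps \downarrow 0$ finishes the proof.

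The hard part is conceptual rather than computational: a naive application of dominated convergence directly to the raw defect fails, because $|f_n|^r$ admits no $n$-uniform integrable majorant (the sequence is only bounded, not dominated). The entire role of the auxiliary parameter $\eps$ and the positive-part truncation is to quarantine the uncontrolled mass $\eps|g_n|^r$ — harmless because $\|g_n\|_{L^r}^r \le M$ — while leaving a residual $W_{n,\eps}$ that is simultaneously dominated by $(C_\eps+1)|f|^r$ and a.e.\ null. The only point requiring care is verifying the elementary inequality uniformly across all $r \in [1,\infty)$, including the borderline $r = 1$ (where it is just the triangle inequality) and the range $1 < r < 2$ (where $t \mapsto |t|^r$ is merely $C^1$), but this is routine.
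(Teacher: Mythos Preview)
Your argument is correct and is precisely the classical Br\'ezis--Lieb proof from \cite{BL83}: the elementary inequality $\bigl|\,|a+b|^r-|a|^r\bigr|\le\eps|a|^r+C_\eps|b|^r$, the positive-part truncation $W_{n,\eps}$, dominated convergence against $(C_\eps+1)|f|^r$, and the final $\limsup\le\eps M$ step are all exactly as in the original. The paper itself does not supply a proof of this lemma---it simply quotes the result from \cite{BL83}---so there is nothing further to compare.
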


First we show that if $\scM_\omega$ is not empty, then $\scG_\omega$ is characterized as $\scM_\omega$.

\begin{lemma}\label{lem:2.4}
$\scM_\omega\subset\scG_\omega$.
\end{lemma}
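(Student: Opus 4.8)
\textbf{Proof proposal for Lemma \ref{lem:2.4}.}
The plan is to show that any $v\in\scM_\omega$ is a critical point of $S_\omega$, i.e. $S_\omega'(v)=0$, so that $v\in\scA_\omega$; combined with the inequality $d(\omega)\le\mu(\omega)$ that holds trivially because $\scK_\omega\supset\scA_\omega$ (every solution of \eqref{EL} satisfies the Nehari constraint, being obtained by pairing $S_\omega'(v)=0$ with $v$), this forces $S_\omega(v)=\mu(\omega)\le d(\omega)\le S_\omega(v)$, hence $v\in\scG_\omega$. The heart of the matter is therefore the Lagrange-multiplier step: a minimizer of $S_\omega$ on the Nehari manifold $\scK_\omega=\{K_\omega=0\}\setminus\{0\}$ is an unconstrained critical point.

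The steps, in order. First I would record that $\scK_\omega$ is nonempty and that $\mu(\omega)>0$: for any $w\ne0$ the map $t\mapsto K_\omega(tw)$ is positive for small $t$ (the quadratic and $L^{p+1}$ terms dominate) and negative for large $t$ (the $-\|tw\|_{L^{q+1}}^{q+1}$ term dominates since $q+1>p+1$ and $q+1>2$), so it has a positive zero $t_w$; and along $\scK_\omega$ one has $S_\omega(v)=S_\omega(v)-\tfrac1{q+1}K_\omega(v)\ge c(\|\nabla v\|_{L^2}^2+\omega\|v\|_{L^2}^2+\|v\|_{L^{p+1}}^{p+1})$ with $c>0$, while $K_\omega(v)=0$ together with the Gagliardo--Nirenberg/Sobolev embedding gives a uniform lower bound on this last quantity, whence $\mu(\omega)>0$. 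Second, take $v\in\scM_\omega$ and apply the Lagrange multiplier theorem: since $\tfrac{d}{dt}K_\omega(tv)|_{t=1}=2\|\nabla v\|_{L^2}^2+2\omega\|v\|_{L^2}^2+(p+1)\|v\|_{L^{p+1}}^{p+1}-(q+1)\|v\|_{L^{q+1}}^{q+1}$, and using $K_\omega(v)=0$ to eliminate $\|v\|_{L^{q+1}}^{q+1}$, this equals $-(q-1)\|\nabla v\|_{L^2}^2-(q-1)\omega\|v\|_{L^2}^2-(q-p)\|v\|_{L^{p+1}}^{p+1}<0$, so $K_\omega'(v)\ne0$ and there is $\eta\in\R$ with $S_\omega'(v)=\eta\,K_\omega'(v)$. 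Third, pair this identity with $v$: the left side gives $\langle S_\omega'(v),v\rangle=K_\omega(v)=0$, and the right side gives $\eta\langle K_\omega'(v),v\rangle=\eta\cdot(\text{the negative quantity above})$, forcing $\eta=0$. Hence $S_\omega'(v)=0$, i.e. $v\in\scA_\omega$, and the argument of the first paragraph concludes.

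The main obstacle is making the Lagrange-multiplier step rigorous in the zero-mass case $\omega=0$, where the ambient space is $X_0=\dot H^1\cap L^{p+1}$ rather than a Hilbert space: one must check that $S_0$ and $K_0$ are $C^1$ on $X_0$ (the $L^{p+1}$ and $L^{q+1}$ terms are $C^1$ by the subcritical growth $p+1,q+1<2^*$ together with $q+1>p+1$, so the $L^{q+1}$ term is controlled by $\|v\|_{\dot H^1}$ and $\|v\|_{L^{p+1}}$ via \eqref{GN}), and that the multiplier rule applies with the nondegeneracy $\langle K_0'(v),v\rangle\ne0$ established above. A secondary point worth care is the coercivity estimate $S_\omega(v)\ge c\,(\cdots)$ on $\scK_\omega$: for $\omega=0$ the term $\|v\|_{L^2}^2$ is simply absent and the relevant norm is the $X_0$-norm, and one should note that $\mu(0)>0$ already uses \eqref{GN} to bound $\|v\|_{L^{q+1}}^{q+1}$ from above by a power strictly larger than $1$ in $\|v\|_{X_0}$, preventing the infimum from being $0$. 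None of these require new ideas, only the observation that the earlier-quoted embeddings (Gagliardo--Nirenberg \eqref{GN} and, for $N\ge3$, Sobolev \eqref{eq:1.4}) are exactly what is needed to run the classical Nehari argument in this slightly nonstandard function space.
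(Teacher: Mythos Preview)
Your proof is correct and follows essentially the same Lagrange-multiplier argument as the paper (you eliminate $\|v\|_{L^{q+1}}^{q+1}$ via $K_\omega(v)=0$ where the paper eliminates $\|v\|_{L^{p+1}}^{p+1}$, but both expressions for $\langle K_\omega'(v),v\rangle$ are strictly negative). One small slip: you state ``$d(\omega)\le\mu(\omega)$ holds trivially because $\scK_\omega\supset\scA_\omega$'' but the inclusion gives the reverse inequality $\mu(\omega)\le d(\omega)$---which is in fact what you correctly use in the chain $S_\omega(v)=\mu(\omega)\le d(\omega)\le S_\omega(v)$.
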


\begin{proof}
Let $\phi\in\scM_\omega$. 

First we show that $\phi\in\scA_\omega$.
Using $K_\omega(\phi)=0$, we have
\begin{equation}
\label{eq:2.1}
\begin{aligned}
\langle K_\omega'(\phi),\phi\rangle
&=\bigl.\del_\lambda K_\omega(\lambda\phi)\bigr|_{\lambda=1} \\
&=2(\|\nabla\phi\|_{L^2}^2+\omega\|\phi\|_{L^2}^2)
+(p+1)\|\phi\|_{L^{p+1}}^{p+1}
-(q+1)\|\phi\|_{L^{q+1}}^{q+1} \\
&=-(p-1)(\|\nabla\phi\|_{L^2}^2+\omega\|\phi\|_{L^2}^2)
-(q-p)\|\phi\|_{L^{q+1}}^{q+1}
<0.
\end{aligned}
\end{equation}
Since $\phi\in\scM_\omega$, there exists a Lagrange multiplier $\eta\in\R$ such that $S_\omega'(\phi)=\eta K_\omega'(\phi)$.
From \eqref{eq:2.1} and $\eta\langle K_\omega'(\phi),\phi\rangle=\langle S_\omega'(\phi),\phi\rangle=K_\omega(\phi)=0$, we have $\eta=0$.
This implies that $S_\omega'(\phi)=0$.

Next we show that $\phi\in\scG_\omega$.
Let $v\in\scA_\omega$.
From $K_\omega(v)=\langle S_\omega'(v),v\rangle=0$ and the definition of $\scM_\omega$, we have $S_\omega(\phi)\le S_\omega(v)$,
which yields that $\phi\in\scG_\omega$.
This completes the proof.
\end{proof}

\begin{lemma}\label{lem:2.5}
If $\scM_\omega$ is not empty, then $\scG_\omega\subset\scM_\omega$.
\end{lemma}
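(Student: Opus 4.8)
\textbf{Proof plan for Lemma~\ref{lem:2.5}.}
The plan is to show that any ground state $\phi\in\scG_\omega$ achieves the Nehari minimum $\mu(\omega)$, given that $\scM_\omega\neq\emptyset$. First I would fix $\psi\in\scM_\omega$, which by Lemma~\ref{lem:2.4} lies in $\scG_\omega$, so $S_\omega(\psi)=d(\omega)$ and $S_\omega(\psi)=\mu(\omega)$. Now take an arbitrary $\phi\in\scG_\omega$. Since $\phi\in\scA_\omega$, we have $K_\omega(\phi)=\langle S_\omega'(\phi),\phi\rangle=0$, so $\phi\in\scK_\omega$ and hence $S_\omega(\phi)\ge\mu(\omega)$ by definition of the Nehari minimum. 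On the other hand, $S_\omega(\phi)=d(\omega)\le S_\omega(\psi)=\mu(\omega)$ because $\psi\in\scA_\omega$ and $d(\omega)$ is the infimum of $S_\omega$ over $\scA_\omega$. Combining the two inequalities gives $S_\omega(\phi)=\mu(\omega)$, and since $\phi\in\scK_\omega$ this means $\phi\in\scM_\omega$.

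The key steps, in order: (1) use $\scM_\omega\neq\emptyset$ together with Lemma~\ref{lem:2.4} to produce a ground state that is also a Nehari minimizer, thereby identifying $d(\omega)$ with $\mu(\omega)$; (2) observe that every element of $\scA_\omega$ (in particular every ground state) lies on the Nehari manifold $\scK_\omega$, because $K_\omega(v)=\langle S_\omega'(v),v\rangle$; (3) sandwich $S_\omega(\phi)$ between $\mu(\omega)$ (from the Nehari minimization, using step~2) and $d(\omega)=\mu(\omega)$ (from the ground state property); (4) conclude equality and membership in $\scM_\omega$.

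I do not expect a serious obstacle here: the argument is a short sandwiching once one notices that $\scA_\omega\subset\scK_\omega$ and that the nonemptiness hypothesis links the two minimization values via Lemma~\ref{lem:2.4}. The only point requiring a little care is that the equality $d(\omega)=\mu(\omega)$ is being established \emph{as part of} this lemma (not assumed), so I would phrase step~(1) as: pick any $\psi\in\scM_\omega$; by Lemma~\ref{lem:2.4}, $\psi\in\scG_\omega$, hence $\mu(\omega)=S_\omega(\psi)\ge d(\omega)$, while for arbitrary $\phi\in\scG_\omega$ one has $\phi\in\scK_\omega$ so $d(\omega)=S_\omega(\phi)\ge\mu(\omega)$; these force $d(\omega)=\mu(\omega)$ and $\scG_\omega\subset\scM_\omega$ simultaneously. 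This also slots neatly into the proof of Theorem~\ref{thm:2.1}, which combines Lemmas~\ref{lem:2.4} and~\ref{lem:2.5} with the existence of a Nehari minimizer.
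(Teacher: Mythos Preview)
Your proposal is correct and follows essentially the same approach as the paper: pick $\psi\in\scM_\omega$, use Lemma~\ref{lem:2.4} to get $\psi\in\scG_\omega$, and then observe that any $\phi\in\scG_\omega$ has $S_\omega(\phi)=S_\omega(\psi)=\mu(\omega)$ and lies on $\scK_\omega$. The paper's version is marginally terser (it writes the chain $S_\omega(\phi)=S_\omega(\psi)\le S_\omega(v)$ for all $v\in\scK_\omega$ directly rather than sandwiching), and it leaves the verification $\phi\in\scK_\omega$ implicit, whereas you spell it out via $K_\omega(\phi)=\langle S_\omega'(\phi),\phi\rangle=0$; but the substance is identical.
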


\begin{proof}
Let $\phi\in\scG_\omega$.
Since we assume that $\scM_\omega$ is not empty, we can take $\psi\in\scM_\omega$.
By Lemma~\ref{lem:2.4}, we have $\psi\in\scG_\omega$.
Therefore, we obtain $S_\omega(\phi)=S_\omega(\psi)\le S_\omega(v)$ for all $v\in\scK_\omega$, which implies $\phi\in\scM_\omega$.
This completes the proof.
\end{proof}

Next we show that $\scM_\omega$ is not empty.
We set
\begin{align*}
L_{\omega}(v)&:=\| \nabla v\|_{L^2}^2 +\omega \| v\|_{L^2}^2,
\\
J_{\omega}(v)&:=\l( \frac{1}{2}-\frac{1}{q+1}\r)L_{\omega}(v)+\l( \frac{1}{p+1} -\frac{1}{q+1}\r)\| v\|_{L^{p+1}}^{p+1},
\end{align*}
which are well-defined on $X$.
The functional $S_{\omega}$ is rewritten as
\begin{align}
\label{eq:2.2}
S_{\omega}(v)&=\frac{1}{q+1}K_{\omega}(v)+J_{\omega}(v)
\\
&=\frac{1}{2}K_{\omega}(v)-\frac{p-1}{2(p+1)}\| v\|_{L^{p+1}}^{p+1} +
\frac{q-1}{2(q+1)}\| v\|_{L^{q+1}}^{q+1}.
\label{eq:2.3}
\end{align}
In particular, from \eqref{eq:2.2}, $\mu(\omega)$ is rewritten as
\begin{equation} \label{eq:2.4}
\mu(\omega)
=\inf \{ J_{\omega}(v) : v\in\scK_{\omega}\}.
\end{equation}

\begin{lemma}
\label{lem:2.6}
If $K_{\omega}(v)<0$, then $\mu(\omega) <J_{\omega} (v)$.
In particular, 
\begin{equation} \label{eq:2.5}
\mu(\omega)
=\inf \{ J_{\omega}(v) : v\in X\setminus\{0\},~
K_\omega(v)\le 0\}.
\end{equation}
\end{lemma}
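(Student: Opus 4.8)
\textbf{Proof proposal for Lemma~\ref{lem:2.6}.}

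The plan is to show that any $v$ with $K_\omega(v)<0$ can be rescaled onto the Nehari manifold $\scK_\omega$ in a way that strictly decreases $J_\omega$; since on $\scK_\omega$ the infimum of $J_\omega$ equals $\mu(\omega)$ by \eqref{eq:2.4}, this gives $\mu(\omega)<J_\omega(v)$, and then \eqref{eq:2.5} follows immediately because the infimum over the larger set $\{K_\omega\le 0\}$ is attained already on the subset $\scK_\omega$ (the boundary case $K_\omega(v)=0$ being exactly $\scK_\omega$). The natural rescaling is the multiplicative one $v\mapsto \lambda v$ with $\lambda\in(0,1)$. Writing $g(\lambda):=K_\omega(\lambda v)=\lambda^2 L_\omega(v)+\lambda^{p+1}\|v\|_{L^{p+1}}^{p+1}-\lambda^{q+1}\|v\|_{L^{q+1}}^{q+1}$, I first observe that $g(\lambda)>0$ for all small $\lambda>0$ (the $\lambda^2$ term dominates since $2<p+1<q+1$), while $g(1)=K_\omega(v)<0$; hence by the intermediate value theorem there is a smallest $\lambda_0\in(0,1)$ with $g(\lambda_0)=0$, i.e.\ $\lambda_0 v\in\scK_\omega$.

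The second step is to check that $J_\omega$ strictly decreased under this rescaling, i.e.\ $J_\omega(\lambda_0 v)<J_\omega(v)$. Since $J_\omega(\lambda v)=\bigl(\tfrac12-\tfrac1{q+1}\bigr)\lambda^2 L_\omega(v)+\bigl(\tfrac1{p+1}-\tfrac1{q+1}\bigr)\lambda^{p+1}\|v\|_{L^{p+1}}^{p+1}$ and both coefficients $\tfrac12-\tfrac1{q+1}$ and $\tfrac1{p+1}-\tfrac1{q+1}$ are strictly positive (here $1<p<q$ is used), the map $\lambda\mapsto J_\omega(\lambda v)$ is strictly increasing on $(0,\infty)$ whenever $v\neq 0$ (note $L_\omega(v)>0$ because $v\ne 0$ in either of the two relevant function spaces). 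Therefore $\lambda_0<1$ forces $J_\omega(\lambda_0 v)<J_\omega(v)$, and combined with $J_\omega(\lambda_0 v)\ge\mu(\omega)$ from \eqref{eq:2.4} this yields $\mu(\omega)<J_\omega(v)$, which is the first assertion.

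For \eqref{eq:2.5}: denote the right-hand infimum by $m$. Since $\scK_\omega\subset\{v\in X\setminus\{0\}:K_\omega(v)\le 0\}$, clearly $m\le\mu(\omega)$. Conversely, for any $v$ in the larger set, either $K_\omega(v)=0$, in which case $v\in\scK_\omega$ and $J_\omega(v)\ge\mu(\omega)$ by \eqref{eq:2.4}, or $K_\omega(v)<0$, in which case $J_\omega(v)>\mu(\omega)$ by the first part; in both cases $J_\omega(v)\ge\mu(\omega)$, so $m\ge\mu(\omega)$. Hence $m=\mu(\omega)$.

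I do not anticipate a genuine obstacle here; the only point requiring a little care is the elementary verification that $g$ is positive near $0$ and that the relevant structural coefficients are positive, both of which rely only on the standing assumption $1<p<q<2^*-1$ and on $L_\omega(v)>0$ for nonzero $v$ in $X_\omega$ (for $\omega=0$ this uses $v\in\dot H^1$, $v\ne 0$; for $\omega>0$ it is immediate). One should also make sure the argument is uniform in the two cases $\omega=0$ and $\omega>0$, which it is, since nothing beyond $L_\omega(v)>0$ and the exponent ordering is used.
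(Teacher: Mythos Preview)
Your proof is correct and follows essentially the same approach as the paper: both rescale $v\mapsto\lambda v$, use the shape of $\lambda\mapsto K_\omega(\lambda v)$ to find $\lambda_0\in(0,1)$ with $K_\omega(\lambda_0 v)=0$, and then invoke the strict monotonicity of $\lambda\mapsto J_\omega(\lambda v)$ together with \eqref{eq:2.4}. You simply spell out in more detail what the paper summarizes as ``from the shape of the graph,'' and you make the deduction of \eqref{eq:2.5} explicit where the paper leaves it implicit.
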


\begin{proof}
Since $K_\omega(v)<0$, we see from the shape of the graph of $\lambda\mapsto K_\omega(\lambda v)$ that there exists $\lambda_1\in(0,1)$ satisfying $K_\omega(\lambda_1v)=0$.
Therefore, by the expression~\eqref{eq:2.4}, we have
\[
\mu(\omega)\le 
J_\omega(\lambda_1v)
<J_\omega(v).
\]
This completes the proof.
\end{proof}

\begin{lemma}\label{lem:2.7}
$\mu(\omega)>0$.
\end{lemma}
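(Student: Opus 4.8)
The plan is to prove $\mu(\omega)>0$ by establishing a uniform lower bound for $J_\omega(v)$ over the set $\{v\in X\setminus\{0\}:K_\omega(v)\le 0\}$, using the reformulation \eqref{eq:2.5} from Lemma~\ref{lem:2.6}. First I would observe that if $K_\omega(v)\le 0$, then by the definition of $K_\omega$ we have $L_\omega(v)+\|v\|_{L^{p+1}}^{p+1}\le\|v\|_{L^{q+1}}^{q+1}$, so in particular $\|v\|_{L^{q+1}}^{q+1}$ cannot be small relative to $L_\omega(v)$ and $\|v\|_{L^{p+1}}^{p+1}$; the idea is to turn this into a lower bound on $L_\omega(v)+\|v\|_{L^{p+1}}^{p+1}$ itself via the Gagliardo--Nirenberg inequality \eqref{GN} (together with $\|\nabla v\|_{L^2}^2\le L_\omega(v)$).

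The key step is the interpolation estimate. Applying \eqref{GN} gives $\|v\|_{L^{q+1}}^{q+1}\le C_{GN}^{q+1}\|v\|_{L^{p+1}}^{(1-\theta)(q+1)}\|\nabla v\|_{L^2}^{\theta(q+1)}$, and since $\|\nabla v\|_{L^2}^2\le L_\omega(v)$ and $\|v\|_{L^{p+1}}^{p+1}\le J_\omega(v)\cdot\bigl(\tfrac{1}{p+1}-\tfrac{1}{q+1}\bigr)^{-1}$ while $L_\omega(v)\le J_\omega(v)\cdot\bigl(\tfrac12-\tfrac{1}{q+1}\bigr)^{-1}$, one can bound $\|v\|_{L^{q+1}}^{q+1}\le C\,J_\omega(v)^{\alpha}$ for a suitable exponent $\alpha=(1-\theta)\frac{q+1}{p+1}+\theta\frac{q+1}{2}$. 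The scaling relation defining $\theta$ in \eqref{GN} is exactly what forces $\alpha>1$ (this is where the subcriticality $q<2^*-1$ and $p<q$ enter). Combining with $K_\omega(v)\le 0$, which yields $\|v\|_{L^{q+1}}^{q+1}\ge L_\omega(v)+\|v\|_{L^{p+1}}^{p+1}\ge c\,J_\omega(v)$ for an explicit constant $c>0$ depending only on $p,q$, we obtain $c\,J_\omega(v)\le C\,J_\omega(v)^{\alpha}$, hence $J_\omega(v)\ge(c/C)^{1/(\alpha-1)}>0$. Taking the infimum over all such $v$ gives $\mu(\omega)\ge(c/C)^{1/(\alpha-1)}>0$.

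The main obstacle — really the only delicate point — is verifying that the exponent $\alpha$ arising from the Gagliardo--Nirenberg scaling is strictly greater than $1$, since the argument collapses if $\alpha=1$. This should follow directly from the scaling identity $\frac{N}{q+1}=(1-\theta)\frac{N}{p+1}+\theta(-1+\frac N2)$ defining $\theta$, which rearranges to show $\alpha=(1-\theta)\frac{q+1}{p+1}+\theta\frac{q+1}{2}$ equals $1$ precisely on a borderline one would need to exclude; the strict inequality $1<p<q<2^*-1$ rules this out. One should also note that in the case $\omega=0$ the bound $\|\nabla v\|_{L^2}^2\le L_0(v)$ is an equality and the functional $J_0$ still controls $\|\nabla v\|_{L^2}^2$ and $\|v\|_{L^{p+1}}^{p+1}$, so the same computation applies verbatim on $X_0=\dot H^1\cap L^{p+1}$; no separate argument is needed for the zero mass case. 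Finally, since the constants $c,C,\alpha$ depend only on $p,q,N$ and not on $\omega$, the lower bound is in fact uniform for $\omega\ge 0$, which will be convenient later.
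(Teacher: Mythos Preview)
Your proof is correct and takes essentially the same approach as the paper: both use the Gagliardo--Nirenberg inequality together with the constraint $K_\omega(v)\le0$ to force a positive lower bound on $J_\omega(v)$. The only difference is cosmetic---the paper uses the additive form $\|v\|_{L^{q+1}}^{q+1}\le C_1\|\nabla v\|_{L^2}^{q+1}+C_2\|v\|_{L^{p+1}}^{q+1}$ to conclude that one of the two norms is bounded below, whereas you keep the multiplicative form and close via the exponent inequality $\alpha>1$ (which indeed follows from the scaling relation, giving $\alpha=1+\theta(q+1)/N$).
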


\begin{proof}
Let $v\in \scK_\omega$.
By using the Gagliardo--Nirenberg inequality
\[
\|v\|_{L^{q+1}}^{q+1}
\le C_1\|\nabla v\|_{L^{2}}^{q+1}
+C_2\|v\|_{L^{p+1}}^{q+1},
\]
we have 
\begin{equation}
\label{eq:2.6}
0=K_\omega(v)
\ge \left(1-C_1\|\nabla v\|_{L^2}^{q-1}\right)\|\nabla v\|_{L^2}^2
+\left(1-C_2\|v\|_{L^{p+1}}^{q-p}\right)\|v\|_{L^{p+1}}^{p+1}.
\end{equation}
Noting that $v\ne0$, the inequality \eqref{eq:2.6} implies that $\|\nabla v\|_{L^2}\ge (1/C_1)^{1/(q-1)}$ or
$\|v\|_{L^{p+1}}\ge (1/C_2)^{1/(q-p)}$.
Therefore, we obtain
\[
\min\left\{\l(\frac{1}{2}-\frac{1}{q+1}\r)\l(\frac{1}{C_1}\r)^{\frac{2}{q-1}},
\l(\frac{1}{p+1}-\frac{1}{q+1}\r)\l(\frac{a}{C_2}\r)^{\frac{p+1}{q-p}}
\right\}
\le J_\omega(v).
\]
From this and \eqref{eq:2.4}, we have the conclusion.
\end{proof}

Now we prove that $\scM_\omega$ is not empty.

\begin{lemma}
\label{lem:2.8}
If $\{v_n\}\subset X_\omega$ is a minimizing sequence for $\mu(\omega)$, 
that is, 
\[
K_\omega(v_n)\to0,\quad
S_\omega(v_n)\to\mu(\omega),
\]
then there exist $\{y_n\}\subset\R^N$, a subsequence $\{v_{n_j}\}$, and $v_0\in X_\omega\setminus\{0\}$ such that
$v_{n_j}(\cdot-y_{n_j})\to v_0$ in $X_\omega$.
In particular, $v_0\in\scM_\omega$.
\end{lemma}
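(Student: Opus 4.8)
\textbf{Proof plan for Lemma \ref{lem:2.8}.}
The strategy is the standard concentration-compactness scheme adapted to the space $X_\omega$, combined with the Brezis--Lieb type decomposition of Lemma \ref{lem:2.3} and the radial/translation compactness of Lemma \ref{lem:2.2}. First I would check that $\{v_n\}$ is bounded in $X_\omega$: from the expression \eqref{eq:2.3} together with $K_\omega(v_n)\to0$, we get $S_\omega(v_n)=\tfrac12 K_\omega(v_n)-\tfrac{p-1}{2(p+1)}\|v_n\|_{L^{p+1}}^{p+1}+\tfrac{q-1}{2(q+1)}\|v_n\|_{L^{q+1}}^{q+1}$, while from \eqref{eq:2.2} and $J_\omega(v_n)\ge0$ we control $L_\omega(v_n)$ and $\|v_n\|_{L^{p+1}}^{p+1}$ by $S_\omega(v_n)-\tfrac1{q+1}K_\omega(v_n)\to\mu(\omega)$. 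Thus $\{v_n\}$ is bounded in $\dot H^1\cap L^{p+1}$ (and in $L^2$ when $\omega>0$), and moreover $\|\nabla v_n\|_{L^2}$ and $\|v_n\|_{L^{p+1}}$ are bounded away from the degeneracies allowed by $J_\omega$.

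Next I would show nontriviality in $L^{q+1}$. Since $K_\omega(v_n)\to0$ we have $\|v_n\|_{L^{q+1}}^{q+1}=L_\omega(v_n)+\|v_n\|_{L^{p+1}}^{p+1}+o(1)$, and if this tended to $0$ then $L_\omega(v_n)\to0$ and $\|v_n\|_{L^{p+1}}\to0$, forcing $J_\omega(v_n)\to0$, i.e.\ $\mu(\omega)=0$, contradicting Lemma \ref{lem:2.7}. Hence $\limsup_n\|v_n\|_{L^{q+1}}>0$. Applying Lemma \ref{lem:2.2} (with the exponent $q$ of the statement; note $q\in(p,2^*-1)$), there exist $\{y_n\}\subset\R^N$ and $v_0\in\dot H^1\cap L^{p+1}\setminus\{0\}$ so that, along a subsequence, $w_j:=v_{n_j}(\cdot-y_{n_j})\wto v_0$ weakly in $\dot H^1\cap L^{p+1}$; passing to a further subsequence we may assume $w_j\to v_0$ a.e. When $\omega>0$ the $L^2$ bound gives also $w_j\wto v_0$ in $H^1$, and in particular $v_0\in H^1=X_\omega$; when $\omega=0$ this is automatic. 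The translation invariance of all functionals means $K_\omega(w_j)\to0$, $S_\omega(w_j)\to\mu(\omega)$, $J_\omega(w_j)\to\mu(\omega)$.

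Now I would run the Brezis--Lieb splitting. Write $r_j:=w_j-v_0$; then $r_j\wto0$. By Lemma \ref{lem:2.3} applied to the $L^{p+1}$, $L^{q+1}$ (and, if $\omega>0$, $L^2$) norms, and by the Hilbert-space orthogonality for $\|\nabla\cdot\|_{L^2}^2$, we get
\begin{align*}
K_\omega(w_j)&=K_\omega(v_0)+K_\omega(r_j)+o(1),\\
J_\omega(w_j)&=J_\omega(v_0)+J_\omega(r_j)+o(1).
\end{align*}
Since $J_\omega(r_j)\ge0$ and $J_\omega(v_0)\ge0$, and $J_\omega(w_j)\to\mu(\omega)$, we obtain $J_\omega(v_0)\le\mu(\omega)$ and $\limsup_j J_\omega(r_j)\le\mu(\omega)$. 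The key point is to show $K_\omega(v_0)\le0$: from $K_\omega(v_0)+K_\omega(r_j)=o(1)$ it suffices to show $\liminf_j K_\omega(r_j)\ge0$. If instead $K_\omega(r_{j})<-\delta<0$ along a subsequence, then by Lemma \ref{lem:2.6} we would have $J_\omega(r_{j})>\mu(\omega)+\delta'$ for some $\delta'>0$ (using that $K_\omega(r_j)$ bounded away from $0$ forces $\lambda_1$ bounded away from $1$ in the proof of that lemma, hence a quantitative gap), contradicting $\limsup_j J_\omega(r_j)\le\mu(\omega)$. Hence $K_\omega(v_0)\le0$, and since $v_0\ne0$, Lemma \ref{lem:2.6} (in the form \eqref{eq:2.5}) gives $J_\omega(v_0)\ge\mu(\omega)$, so $J_\omega(v_0)=\mu(\omega)$ and $K_\omega(v_0)=0$ (if $K_\omega(v_0)<0$ strictly, \eqref{eq:2.5} would be violated). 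Therefore $v_0\in\scK_\omega$ and, by \eqref{eq:2.2}, $S_\omega(v_0)=J_\omega(v_0)=\mu(\omega)$, i.e.\ $v_0\in\scM_\omega$. Finally, $J_\omega(r_j)\to0$ forces $\|\nabla r_j\|_{L^2}\to0$ and $\|r_j\|_{L^{p+1}}\to0$ (and $\|r_j\|_{L^2}\to0$ if $\omega>0$), so $w_j\to v_0$ strongly in $X_\omega$, as claimed.

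The main obstacle is the inequality $K_\omega(v_0)\le0$: weak convergence only gives $K_\omega(v_0)\le\liminf K_\omega(w_j)=0$ directly for the convex lower-semicontinuous pieces $L_\omega$ and $\|\cdot\|_{L^{p+1}}^{p+1}$, but the term $-\|\cdot\|_{L^{q+1}}^{q+1}$ goes the wrong way, so one genuinely needs the Brezis--Lieb decomposition together with the quantitative version of Lemma \ref{lem:2.6} to rule out "splitting of mass" (i.e.\ $K_\omega(r_j)$ staying negative). Making the gap in Lemma \ref{lem:2.6} quantitative — showing that $K_\omega(v)\le-\delta$ on a bounded set implies $J_\omega(v)\ge\mu(\omega)+c(\delta)$ — is the one spot requiring a little care, but it follows by tracking $\lambda_1$ in that lemma's proof and using the boundedness of $\{r_j\}$.
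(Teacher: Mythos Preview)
Your overall scheme matches the paper's proof exactly: boundedness via \eqref{eq:2.2}, nontriviality of $\|v_n\|_{L^{q+1}}$ via Lemma~\ref{lem:2.7}, extraction of a nonzero weak limit via Lemma~\ref{lem:2.2}, and the Brezis--Lieb splitting of $J_\omega$ and $K_\omega$. The only substantive difference is in the step where you show $K_\omega(v_0)\le0$. You argue by contradiction, assuming $K_\omega(r_j)\le-\delta$ along a subsequence, and then invoke a \emph{quantitative} strengthening of Lemma~\ref{lem:2.6} (namely, $K_\omega(v)\le-\delta$ on a bounded set implies $J_\omega(v)\ge\mu(\omega)+c(\delta)$) to contradict $\limsup_j J_\omega(r_j)\le\mu(\omega)$. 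This quantitative claim is true and can be justified as you sketch, but the paper avoids it entirely by a sharper use of the splitting: since $v_0\ne0$, one has $J_\omega(v_0)>0$ \emph{strictly}, hence $\lim_j J_\omega(r_j)=\mu(\omega)-J_\omega(v_0)<\mu(\omega)$ strictly; then the non-quantitative \eqref{eq:2.5} already forces $K_\omega(r_j)>0$ for large $j$, giving $K_\omega(v_0)\le0$ immediately from the $K_\omega$-splitting. In short, your detour through a quantitative gap is correct but unnecessary --- you only used $J_\omega(v_0)\ge0$ where the strict inequality $J_\omega(v_0)>0$ is available and does all the work.
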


\begin{proof}
Since $K_\omega(v_n)\to0$ and $S_\omega(v_n)\to\mu(\omega)$, we see from \eqref{eq:2.2} and \eqref{eq:2.3} that
\begin{align}\label{eq:2.7}
J_\omega(v_n)
&\to\mu(\omega), \\
\label{eq:2.8}
-\frac{p-1}{2(p+1)}\| v_n\|_{L^{p+1}}^{p+1} 
+\frac{q-1}{2(q+1)}\| v_n\|_{L^{q+1}}^{q+1}
&\to\mu(\omega).
\end{align}
It follows from \eqref{eq:2.7} that $\{ v_n\}$ is bounded in $X_{\omega}$. Also, since $\mu(\omega)>0$ by Lemma~\ref{lem:2.7}, it follows from \eqref{eq:2.8} that $\limsup_{n\to\infty}\|v_n\|_{L^{q+1}}>0$. Then, by Lemma~\ref{lem:2.2} there exist $\{y_n\}\subset\R^N$, $v_0\in X_\omega\setminus\{0\}$, and a subsequence of $\{v_n(\cdot-y_n)\}$, which we still denote by the same notation, such that $v_n(\cdot-y_n)\wto v_0$ weakly in $X_\omega$. We put $w_n:=v_n(\cdot-y_n)$.

Now we show the strong convergence of $\{w_n\}$. Taking a subsequence of $\{w_n\}$ if necessary, we may assume that $w_n\to v_0$ a.e.\ in $\mathbb{R}^N$, and that all of limits appearing below exist. By using Lemma~\ref{lem:2.3} we have
\begin{align}\label{eq:2.9}
J_\omega(w_n)
-J_\omega(w_n-v_0)
&\to J_\omega(v_0), \\ \label{eq:2.10}
K_\omega(w_n)
-K_\omega(w_n-v_0)
&\to K_\omega(v_0).
\end{align}
Since $J_\omega(v_0)>0$ by $v_0\ne0$, it follows from \eqref{eq:2.9} and \eqref{eq:2.7} that
\[
\lim_{n\to\infty}J_\omega(w_n-v_0)
=\lim_{n\to\infty}J_\omega(w_n)-J_\omega(v_0)
<\lim_{n\to\infty}J_\omega(w_n)
=\mu(\omega).
\]
From this and \eqref{eq:2.5}, we have $K_\omega(w_n-v_0)>0$ for large $n$. Therefore, by $K_\omega(w_n)\to0$ and \eqref{eq:2.10}, we obtain $K_\omega(v_0)\le0$. Moreover, by \eqref{eq:2.5} and the weakly lower semicontinuity of the norms, we deduce that
\[
\mu(\omega)
\le J_\omega(v_0)
\le \lim_{n\to\infty}J_\omega(w_n)
=\mu(\omega).
\]
This and \eqref{eq:2.9} imply that $J_\omega(w_n-v_0)\to0$, and by the definition of $J_\omega$ we obtain $w_n\to v_0$ strongly in $X_\omega$. This completes the proof.
\end{proof}

\begin{proof}[Proof of Theorem~\ref{thm:2.1}]
The conclusion follows from Lemmas~\ref{lem:2.4}, \ref{lem:2.5}, and \ref{lem:2.8}.
\end{proof}

\subsection{Characterization of ground states}
\label{sec:2.2}
In this subsection we complete the proof of Proposition~\ref{prop:1.1}.
\begin{lemma} \label{lem:2.9}
$\scA_\omega\subset C^2(\R^N,\C)$.
\end{lemma}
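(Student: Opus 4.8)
\textbf{Proof plan for Lemma~\ref{lem:2.9}.}
The plan is to show that any $v\in\scA_\omega$ is a weak solution of the elliptic equation \eqref{EL}, and then bootstrap via standard elliptic regularity and Sobolev embeddings to conclude that $v$ has H\"older-continuous second derivatives. First I would recall that $v\in\scA_\omega$ means $v\in X_\omega\setminus\{0\}$ with $S_\omega'(v)=0$, which unwinds to the statement that $v$ solves
\begin{align*}
-\Delta v = -\omega v - |v|^{p-1}v + |v|^{q-1}v =: g
\end{align*}
in the sense of distributions on $\R^N$. Since $v\in H^1(\R^N)$ when $\omega>0$ (resp. $v\in\dot H^1\cap L^{p+1}$ when $\omega=0$) and $1<p<q<2^*-1$, the Sobolev embedding $\dot H^1\hookrightarrow L^{2^*}$ (or $H^1\hookrightarrow L^r$ for admissible $r$) shows $v\in L^r$ for a range of $r$ making the right-hand side $g$ lie in $L^r_{\mathrm{loc}}$ for some $r>1$.

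The main step is the bootstrap. Starting from $v\in L^{2^*}_{\mathrm{loc}}$, the nonlinear terms $|v|^{p-1}v$ and $|v|^{q-1}v$ lie in $L^{2^*/p}_{\mathrm{loc}}$ and $L^{2^*/q}_{\mathrm{loc}}$ respectively; since $q<2^*-1<2^*$ these exponents exceed $1$, so $g\in L^s_{\mathrm{loc}}$ with $s = \min\{2, 2^*/q\}>1$ (the $\omega v$ term is harmless as $v\in L^2_{\mathrm{loc}}$). By interior $L^s$ elliptic estimates (Calder\'on--Zygmund), $v\in W^{2,s}_{\mathrm{loc}}$, hence $v\in L^{s^{**}}_{\mathrm{loc}}$ with an improved integrability exponent. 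Iterating this finitely many times — the gain $s\mapsto s^{**}$ strictly increases the exponent at each stage as long as $2s<N$, and once $2s>N$ one lands in $W^{2,s}_{\mathrm{loc}}\hookrightarrow C^{0,\alpha}_{\mathrm{loc}}$ — one reaches $v\in L^r_{\mathrm{loc}}$ for all finite $r$, and then $v\in C^{0,\alpha}_{\mathrm{loc}}$ for some $\alpha\in(0,1)$. At that point $g$ is H\"older continuous (composition of continuous functions with the H\"older-continuous $v$, using $p,q>1$ so the nonlinearities are $C^1$), and Schauder interior estimates give $v\in C^{2,\alpha}_{\mathrm{loc}}(\R^N)$, in particular $v\in C^2(\R^N,\C)$. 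One should note the argument is applied to the real and imaginary parts, or equivalently to $|v|$-type quantities, but since the nonlinearity $z\mapsto |z|^{p-1}z$ is $C^1$ on $\C$ for $p>1$ the vector-valued bootstrap goes through verbatim.

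The main obstacle — really the only delicate point — is organizing the bootstrap exponents cleanly when $N\ge 3$ so that each iteration genuinely improves integrability and terminates after finitely many steps; this is where one must use $q<2^*-1$ crucially (if $q$ were critical the first nonlinear term would only give $g\in L^1_{\mathrm{loc}}$ and the iteration could stall). For $N=1,2$ the embedding $\dot H^1\hookrightarrow L^r$ for all $r<\infty$ (resp. the $H^1$ embeddings) makes the argument almost immediate. For the zero-mass case $\omega=0$ one works on balls and uses $\dot H^1\cap L^{p+1}$ rather than $H^1$, but the interior estimates are purely local so this causes no difficulty. I would also remark that this lemma is exactly what is needed to justify the pointwise manipulations (positivity, radial symmetry, decay) invoked in Propositions~\ref{prop:1.1} and~\ref{prop:1.2}.
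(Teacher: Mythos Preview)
Your proposal is correct and follows essentially the same route as the paper: the paper's proof consists of the single sentence ``This follows from the standard bootstrap arguments (see, e.g., \cite[Theorem~11.7]{LL}),'' and what you have written is precisely that bootstrap spelled out. The only minor imprecision is the phrase ``$\dot H^1\hookrightarrow L^r$ for all $r<\infty$'' in low dimensions, which fails globally for $\dot H^1(\R^2)$ alone; but since you correctly localize everything and use $\dot H^1\cap L^{p+1}\subset H^1_{\mathrm{loc}}$, the argument goes through.
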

\begin{proof}
This follows from the standard bootstrap arguments
(see, e.g., \cite[Theorem~11.7]{LL}).
\end{proof}

\begin{lemma} \label{lem:2.10}
If $\phi\in\scM_\omega$,
then there exists $\theta\in\R$  such that $e^{i\theta}\phi$ is a positive function.
\end{lemma}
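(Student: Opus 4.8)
The plan is to show that any minimizer $\phi\in\scM_\omega$ can be made positive after a phase rotation, by comparing $\phi$ with $|\phi|$ and using the variational characterization $\scM_\omega=\scG_\omega$ together with the strict inequality for the Nehari functional. First I would observe that the diamagnetic inequality $|\nabla|\phi||\le|\nabla\phi|$ a.e.\ gives $\|\nabla|\phi|\|_{L^2}\le\|\nabla\phi\|_{L^2}$, while $\||\phi|\|_{L^2}=\|\phi\|_{L^2}$, $\||\phi|\|_{L^{p+1}}=\|\phi\|_{L^{p+1}}$, and $\||\phi|\|_{L^{q+1}}=\|\phi\|_{L^{q+1}}$. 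Hence $K_\omega(|\phi|)\le K_\omega(\phi)=0$ and, since $|\phi|\ne0$, Lemma~\ref{lem:2.6} (via \eqref{eq:2.5}) yields $\mu(\omega)\le J_\omega(|\phi|)\le J_\omega(\phi)=\mu(\omega)$, where the last equality uses $\phi\in\scM_\omega$ and the expression \eqref{eq:2.4}. Therefore all inequalities are equalities; in particular $\|\nabla|\phi|\|_{L^2}=\|\nabla\phi\|_{L^2}$, so $|\phi|$ is itself a minimizer, $|\phi|\in\scM_\omega$, and moreover $K_\omega(|\phi|)=0$.

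Next I would upgrade this to a pointwise statement about the phase. Equality in the diamagnetic inequality forces $\phi$ to have an (a.e.) constant phase: writing $\phi=|\phi|e^{i\sigma(x)}$ on the set where $\phi\ne0$, the identity $|\nabla\phi|^2=|\nabla|\phi||^2+|\phi|^2|\nabla\sigma|^2$ (valid in the appropriate Sobolev sense) together with $\|\nabla|\phi|\|_{L^2}=\|\nabla\phi\|_{L^2}$ gives $|\phi|^2|\nabla\sigma|^2=0$ a.e., so $\sigma$ is locally constant on $\{\phi\ne0\}$. By Lemma~\ref{lem:2.4}, $|\phi|\in\scM_\omega\subset\scG_\omega\subset\scA_\omega$, so $|\phi|$ solves the elliptic equation \eqref{EL}; by Lemma~\ref{lem:2.9} it is $C^2$, and since $|\phi|\ge0$ is a nontrivial solution, the strong maximum principle (applied to $-\Delta|\phi|+\omega|\phi|+|\phi|^p=|\phi|^q\ge0$, i.e.\ $-\Delta|\phi|+(\omega+|\phi|^{p-1})|\phi|\ge0$ with nonnegative potential coefficient) gives $|\phi|>0$ everywhere. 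In particular $\{\phi\ne0\}=\R^N$ is connected, so $\sigma$ is globally constant, say $\sigma\equiv-\theta$, whence $e^{i\theta}\phi=|\phi|>0$.

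\textbf{Main obstacle.} The delicate point is the rigidity of the diamagnetic inequality: making precise that equality in $\|\nabla|\phi|\|_{L^2}\le\|\nabla\phi\|_{L^2}$ forces the phase of $\phi$ to be constant. For a general $H^1$ (or $\dot H^1\cap L^{p+1}$) complex-valued function this requires care near the zero set of $\phi$; the cleanest route is to first establish that $|\phi|$ is a smooth, strictly positive solution of \eqref{EL} (so that on all of $\R^N$ we may write $\phi/|\phi|$ as a well-defined $H^1_{\mathrm{loc}}$ function of modulus one) and only then extract $\nabla(\phi/|\phi|)=0$ from the equality case, concluding $\phi/|\phi|$ is constant by connectedness of $\R^N$. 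Alternatively, one can avoid the pointwise analysis entirely: having shown $|\phi|\in\scM_\omega=\scG_\omega$ and invoking the uniqueness part already available, it suffices to know that any minimizer equals $|\phi|$ up to a phase and translation — but since uniqueness (Proposition~\ref{prop:1.1}) is precisely what is being built up here, the self-contained diamagnetic argument above is preferable at this stage.
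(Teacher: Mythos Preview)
Your argument is correct and close in spirit to the paper's, but the execution differs in a way worth noting. You work directly with $|\phi|$: from the diamagnetic inequality you get $K_\omega(|\phi|)\le0$ and $J_\omega(|\phi|)\le J_\omega(\phi)$, squeeze to equality via \eqref{eq:2.5}, conclude $\|\nabla|\phi|\|_{L^2}=\|\nabla\phi\|_{L^2}$ and $|\phi|\in\scM_\omega$, then use the strong maximum principle on the equation for $|\phi|$ (with nonnegative potential $\omega+|\phi|^{p-1}$) to get $|\phi|>0$, and finally extract constancy of $\phi/|\phi|$ from the equality case of the diamagnetic inequality via the polar identity $|\nabla\phi|^2=|\nabla|\phi||^2+|\phi|^2|\nabla(\phi/|\phi|)|^2$, which is legitimate once $\phi\in C^2$ (Lemma~\ref{lem:2.9}) and $|\phi|>0$.

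The paper instead passes through the auxiliary function $\psi:=|\re\phi|+i|\im\phi|$, observes $|\psi|=|\phi|$ and $|\nabla\psi|=|\nabla\phi|$ so that $\psi\in\scG_0$, and then shows $v:=|\re\phi|>0$ using a Harnack-type positivity result (\cite[Theorem~9.10]{LL}) applied to the linear equation $(-\Delta+|\phi|^{p-1}-|\phi|^{q-1})v=0$, whose zeroth-order coefficient may change sign. With $v>0$ in hand, the paper invokes the equality case of the convexity inequality for gradients in the form of \cite[Theorem~7.8]{LL}, which is stated under the hypothesis $\re\psi>0$ and yields $|\im\phi|=c\,|\re\phi|$; the conclusion $e^{i\theta}\phi>0$ then follows. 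Your route avoids the real/imaginary split and the sign-changing potential by rewriting the equation so the potential is $\omega+|\phi|^{p-1}\ge0$, which lets the ordinary strong maximum principle do the work; the price is that you must justify the polar-decomposition rigidity yourself rather than citing \cite[Theorem~7.8]{LL} verbatim (that theorem needs $\re\phi>0$, not merely $|\phi|>0$). Since you already have $\phi\in C^2$ and $|\phi|>0$, the pointwise identity for $|\nabla\phi|^2$ is unproblematic and your conclusion is sound.
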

\begin{proof}
When $\omega>0$, we can show the assertion by the same argument as in the proof of \cite[Theorem~8.1.4]{C},
so we omit the proof. We only give a proof in the case of $\omega=0$.

Let $\phi\in\scM_0$.
We put $v :=\lvert\re\phi\rvert$, $w := \lvert\im\phi\rvert$, and $\psi:=v+iw$.
By a phase modulation, we may assume that $v\ne0$.

We note that $|\psi| = |\phi|$ and $|\nabla\psi|=|\nabla\phi|$.
This equalities imply $K_0(\psi)=K_0(\phi)$ and $S_0(\psi)=S_0(\phi)$.
Therefore, Theorem~\ref{thm:2.1} implies $\psi\in\scG_0$.
Since $\psi$ is a solution of \eqref{EL}, $v$ satisfies the equation
\[
(-\Delta+|\phi|^{p-1}-|\phi|^{q-1})v=0.
\]
Since $v$ is nonnegative and not identically equal to zero, we see from \cite[Theorem~9.10]{LL} that $v$ is a positive function.
Moreover, since $K_0(|\psi|)\le K_0(\psi)$ and $S_0(|\psi|)\le S_0(\psi)$,
it follows from Lemma~\ref{lem:2.6} that 
$K_0(|\psi|)=K_0(\psi)$ and $S_0(|\psi|)=S_0(\psi)$,
and so $\|\nabla|\psi|\|_{L^2}=\|\nabla\psi\|_{L^2}$.
Therefore, applying \cite[Theorem~7.8]{LL}\footnote{Although Theorem 7.8 in \cite{LL} is proven for the function of $H^1(\R^N)$, the same proof is valid for the function of $\dot{H}^1(\R^N)\cap L^{p+1}(\R^N)$.} 
we deduce that $w=cv$ for some constant $c\geq 0$.

Since $v$ is continuous by Lemma~\ref{lem:2.9} and positive,
$\re\phi$ and $\im\phi$ do not change sign.
This means that there exist constants $\lambda=\pm1$ and $\eta\in\R$ such that $\re\phi=\lambda v$ and $\im\phi=\eta v$.
Taking $\theta\in\R$ so that $e^{-i\theta}=(\lambda+i\eta)/|\lambda+i\eta|$, we obtain $e^{i\theta}\phi=e^{i\theta}(\lambda+i\eta)v=|\lambda+i\eta|v$, which implies that $e^{i\theta}\phi$ is a positive function.
This completes the proof.

\end{proof}

\begin{lemma} \label{lem:2.11}
If $\phi\in\scM_\omega$ is a positive function,
then there exists $y\in\R^N$  such that $\phi(\cdot-y)$ is a radial and decreasing function.
\end{lemma}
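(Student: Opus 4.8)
The plan is to prove that a positive ground state $\phi\in\scM_\omega$ is, up to translation, radially symmetric and strictly decreasing by invoking the moving plane method, which is the standard tool for this kind of statement. Since $\phi$ is positive and, by Lemma~\ref{lem:2.9}, of class $C^2$, and since it solves the elliptic equation \eqref{EL} with $\omega\ge0$, the hypotheses for the Gidas--Ni--Nirenberg type results are in place. The nonlinearity $f(s)=-\omega s-s^p+s^q$ appearing in \eqref{EL} is locally Lipschitz on $[0,\infty)$, which is all that is needed for the moving plane argument to run.

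The key steps I would carry out are as follows. First, I would record the decay of $\phi$ at infinity: for $\omega>0$ the ground state decays exponentially (see \cite{BeL83}), while for $\omega=0$ Proposition~\ref{prop:1.2} gives the precise algebraic decay $\phi(r)\to0$ as $r\to\infty$; in either case $\phi(x)\to0$ as $|x|\to\infty$, and in particular $\phi$ attains a positive maximum. Second, I would quote the symmetry result: for $\omega>0$ one applies the classical theorem of Gidas, Ni, and Nirenberg \cite{GNN81} for positive solutions of $-\Delta\phi=f(\phi)$ on $\R^N$ decaying at infinity, while for $\omega=0$, where the solution need only lie in $\dot H^1\cap L^{p+1}$ and decays only polynomially, one uses the refinement of Li and Ni \cite{LN93} (cited in the excerpt precisely for this purpose), which handles the zero mass case and nonlinearities of the present type. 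Both results yield a point $y\in\R^N$ such that $\phi(\cdot-y)$ is radial about the origin and the radial profile $r\mapsto\phi(r)$ is nonincreasing; the moving plane argument in fact gives strict monotonicity, $\phi'(r)<0$ for $r>0$, since otherwise the boundary-point lemma would be violated.

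A cleaner self-contained alternative, if one prefers not to lean on the full strength of \cite{GNN81, LN93}, is to exploit the variational characterization already established in Theorem~\ref{thm:2.1} together with Schwarz symmetrization. Let $\phi^*$ denote the Schwarz symmetric decreasing rearrangement of the positive function $\phi$. Rearrangement preserves every $L^r$ norm, so $\|\phi^*\|_{L^{p+1}}=\|\phi\|_{L^{p+1}}$ and $\|\phi^*\|_{L^{q+1}}=\|\phi\|_{L^{q+1}}$, and $\|\phi^*\|_{L^2}=\|\phi\|_{L^2}$ when $\omega>0$; moreover the Pólya--Szegő inequality gives $\|\nabla\phi^*\|_{L^2}\le\|\nabla\phi\|_{L^2}$. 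Hence $K_\omega(\phi^*)\le K_\omega(\phi)=0$, and by Lemma~\ref{lem:2.6} the minimality forces $J_\omega(\phi^*)\ge\mu(\omega)=J_\omega(\phi)$, which combined with $J_\omega(\phi^*)\le J_\omega(\phi)$ gives equality throughout; in particular $\|\nabla\phi^*\|_{L^2}=\|\nabla\phi\|_{L^2}$. The equality case of the Pólya--Szegő inequality (see \cite[Theorem~7.8]{LL}, or the sharper results of Brothers--Ziemer) then implies that $\phi$ is, up to translation, equal to $\phi^*$, hence radial and decreasing; strict monotonicity follows because $\phi$ solves \eqref{EL} and any flat piece would contradict the strong maximum principle applied to the equation satisfied by $\partial_r\phi$.

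The main obstacle is the zero mass case $\omega=0$: there $\phi$ lies only in $\dot H^1\cap L^{p+1}$ rather than $H^1$, it decays only polynomially, and the equation \eqref{EL} lacks a sign-definite zeroth-order term, so neither the asymptotics needed to start the moving planes nor the coercivity needed for naive energy comparisons are automatic. This is exactly why the excerpt cites \cite{LN93} alongside \cite{GNN81}, and why in the symmetrization approach one must be careful that the Pólya--Szegő inequality and its equality case are valid on $\dot H^1\cap L^{p+1}$ — a point the paper has already flagged in the footnote to Lemma~\ref{lem:2.10}, where the extension of \cite[Theorem~7.8]{LL} to this function space is noted. Once that technical point is granted, the rest is routine.
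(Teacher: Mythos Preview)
Your primary approach---invoking the moving-plane result of Li and Ni---is exactly what the paper does; its entire proof of this lemma is the single sentence ``This lemma follows from \cite[Theorem~1]{LN93}.'' Your case distinction (using \cite{GNN81} for $\omega>0$ and \cite{LN93} for $\omega=0$) is reasonable, though the paper simply cites \cite{LN93} uniformly.

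Your symmetrization alternative is a genuinely different route. One caution: \cite[Theorem~7.8]{LL}, as invoked in the paper's Lemma~\ref{lem:2.10}, treats the equality case of $\|\nabla|f|\|_{L^2}\le\|\nabla f\|_{L^2}$ for complex $f$, not the equality case of the P\'olya--Szeg\H{o} inequality; for the latter you need Brothers--Ziemer, whose hypothesis (that the level sets of $\phi^*$ where $\nabla\phi^*$ vanishes have measure zero) must be verified \emph{before} you can conclude $\phi$ is a translate of $\phi^*$. That verification again comes down to the equation and the strong maximum principle, so the argument is circular unless done carefully. The moving-plane citation sidesteps this entirely, which is presumably why the paper takes that route.
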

\begin{proof}
This lemma follows from \cite[Theorem~1]{LN93}.
\end{proof}

\begin{proof}[Proof of Proposition~\ref{prop:1.1}]
By Lemmas \ref{lem:2.10} and \ref{lem:2.11}, we can take a positive, radial, and decreasing function $\phi_\omega\in\scM_{\omega}$.
From Theorem~\ref{thm:2.1}, it suffices to show that 
\[
\scM_{\omega}
=\{ e^{i\theta}\phi_{\omega}(\cdot -y): \theta\in\R,\ y\in\R^N\}.
\]
It is trivial that the inclusion $\{ e^{i\theta}\phi_{\omega}(\cdot -y): \theta\in\R, y\in\R^N\}\subset\scM_{\omega}$ holds, and so we only have to verify the inverse inclusion.

Let $\phi\in\scM_\omega$.
By Lemmas~\ref{lem:2.10} and \ref{lem:2.11} there exist $\theta\in\R$ and $y\in\R^N$ such that $e^{-i\theta}\phi(\cdot+y)$ is positive and radial function.
Uniqueness of positive radial solutions follows from Serrin and Tang~\cite[Corollary~(i), Theorem~4]{ST00} for $N\ge3$ and $\omega\ge0$,
from Pucci and Serrin~\cite[Theorem~2]{PS98} for $N=2$ and $\omega=0$,
and from \cite[Theorem~1]{PS98} for $N=2$ and $\omega>0$.
In any case, we deduce that $e^{-i\theta}\phi(\cdot+y)=\phi_\omega$.
This completes the proof.
\end{proof}

\section{Connection between two types of standing waves}
\label{sec:3}
\subsection{Uniform decay estimates}
\label{sec:3.1}
In this subsection we prove the uniform decay estimate of $\{ \phi_{\omega}\}_{0\le\omega\le1}$. 
We note that $\phi_{\omega}(x)=\phi_{\omega}(r)~(r=|x|)$ satisfies the equation
\begin{align}
\label{eq:3.1}
-\phi_{\omega}'' -\frac{N-1}{r}\phi_{\omega}' +\omega\phi_{\omega}+\phi_{\omega}^p -\phi_{\omega}^q =0,\quad r>0.
\end{align}
From Theorem \ref{prop:1.1}, Theorem \ref{thm:2.1}, and \eqref{eq:2.2}, for each $\omega\geq 0$ we have
\begin{align}
\label{eq:3.2}
d(\omega)=\mu(\omega)=S_{\omega}(\phi_{\omega})=J_{\omega}(\phi_{\omega})>0. 
\end{align} 
First we show the boundedness of $\{ \phi_{\omega}\}_{0\le\omega\le1}$ by applying variational characterization of $\phi_{\omega}$ in Section \ref{sec:2}. The key for the proof is the following claim:
\begin{lemma}
\label{lem:3.1}
If $0\leq \omega <\omega'$, then $d(\omega)<d(\omega')$.
\end{lemma}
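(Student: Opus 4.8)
The plan is to exploit the variational characterization $d(\omega)=\mu(\omega)=\inf\{J_\omega(v):v\in\scK_\omega\}$ established in Theorem~\ref{thm:2.1}, together with the reformulation \eqref{eq:2.5} that allows one to minimize $J_\omega$ over the larger, more flexible set $\{v\in X\setminus\{0\}:K_\omega(v)\le 0\}$. The key monotonicity observation is that both $\omega\mapsto K_\omega(v)$ and $\omega\mapsto J_\omega(v)$ are nondecreasing in $\omega$ for a fixed $v\in H^1(\R^N)$, strictly so whenever $\|v\|_{L^2}\ne 0$, since the only $\omega$-dependence enters through the term $\omega\|v\|_{L^2}^2$ (with coefficient $1$ in $K_\omega$ and $\tfrac12-\tfrac1{q+1}>0$ in $J_\omega$).

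First I would take a minimizer $\phi_{\omega'}\in\scG_{\omega'}=\scM_{\omega'}$ for $d(\omega')$, which exists by Theorem~\ref{thm:2.1}; note $\phi_{\omega'}\in H^1(\R^N)$ and is a nontrivial ground state, so in particular $\|\phi_{\omega'}\|_{L^2}>0$ and $K_{\omega'}(\phi_{\omega'})=0$. Since $\omega<\omega'$, the strict monotonicity gives $K_\omega(\phi_{\omega'})<K_{\omega'}(\phi_{\omega'})=0$, so $\phi_{\omega'}$ is an admissible competitor for the minimization problem \eqref{eq:2.5} defining $\mu(\omega)=d(\omega)$. Hence
\begin{align*}
d(\omega)=\mu(\omega)\le J_\omega(\phi_{\omega'})<J_{\omega'}(\phi_{\omega'})=\mu(\omega')=d(\omega'),
\end{align*}
where the strict inequality again uses $\|\phi_{\omega'}\|_{L^2}>0$ and $\omega<\omega'$. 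This proves $d(\omega)<d(\omega')$.

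One point that needs a little care is the case $\omega=0$: then $X_0=\dot H^1(\R^N)\cap L^{p+1}(\R^N)$ rather than $H^1(\R^N)$, but since $\phi_{\omega'}\in H^1(\R^N)\subset X_0$ for $\omega'>0$, it remains a legitimate competitor in the problem defining $\mu(0)$, and $J_0(\phi_{\omega'})$ is finite; the inequality $J_0(\phi_{\omega'})<J_{\omega'}(\phi_{\omega'})$ is immediate from $\omega'\|\phi_{\omega'}\|_{L^2}^2>0$. I do not anticipate a genuine obstacle here — the argument is short — but the one place to be vigilant is ensuring the competitor $\phi_{\omega'}$ actually lies in the function space over which $\mu(\omega)$ is defined and that $K_\omega(\phi_{\omega'})\le 0$ (not merely $<0$) is what \eqref{eq:2.5} requires, which is satisfied a fortiori. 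The proof does not require any compactness or regularity beyond what is already in Theorem~\ref{thm:2.1}.
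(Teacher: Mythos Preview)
Your proposal is correct and follows essentially the same approach as the paper: use $\phi_{\omega'}$ as a competitor in the minimization defining $\mu(\omega)$ via $K_\omega(\phi_{\omega'})<0$, then exploit the strict monotonicity of $J_\omega$ in $\omega$. The only cosmetic difference is that the paper invokes Lemma~\ref{lem:2.6} to get the strict inequality $d(\omega)<J_\omega(\phi_{\omega'})$ in the first step, whereas you use \eqref{eq:2.5} for a non-strict bound there and obtain strictness from the second step; either route yields the conclusion.
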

\begin{proof}
For $0\leq \omega<\omega '$ we have
\begin{align*}
K_{\omega}(\phi_{\omega'})=K_{\omega'}(\phi_{\omega'})+(\omega -\omega')
\| \phi_{\omega'}\|_{L^2}^2<0.
\end{align*}
From Lemma \ref{lem:2.6} and $d(\omega)=\mu(\omega)$, we have
\begin{align*}
d(\omega)<J_{\omega}(\phi_{\omega'})<J_{\omega'}(\phi_{\omega'})=d(\omega'),
\end{align*}
which completes the proof.
\end{proof}
\begin{corollary}
\label{cor:3.2}
$\{\phi_{\omega}\}_{0\le\omega\le1}$ is bounded in $\dot{H}^1(\R^N)\cap L^{p+1}(\R^N)$.
\end{corollary}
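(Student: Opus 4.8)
\textbf{Proof proposal for Corollary~\ref{cor:3.2}.}
The plan is to combine the monotonicity of $d$ from Lemma~\ref{lem:3.1} with the identity \eqref{eq:3.2} and the explicit form of $J_\omega$. First I would note that by Lemma~\ref{lem:3.1} the function $\omega\mapsto d(\omega)$ is increasing on $[0,\infty)$, so that $d(\omega)\le d(1)$ for every $\omega\in[0,1]$. By \eqref{eq:3.2} this gives the uniform bound $J_\omega(\phi_\omega)=d(\omega)\le d(1)$ for all $\omega\in[0,1]$.

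Next I would unwind the definition of $J_\omega$. Since
\[
J_\omega(v)=\l(\frac12-\frac1{q+1}\r)L_\omega(v)+\l(\frac1{p+1}-\frac1{q+1}\r)\|v\|_{L^{p+1}}^{p+1},
\]
and $L_\omega(v)=\|\nabla v\|_{L^2}^2+\omega\|v\|_{L^2}^2\ge\|\nabla v\|_{L^2}^2$, with both coefficients strictly positive because $1<p<q$, we obtain
\[
\l(\frac12-\frac1{q+1}\r)\|\nabla\phi_\omega\|_{L^2}^2+\l(\frac1{p+1}-\frac1{q+1}\r)\|\phi_\omega\|_{L^{p+1}}^{p+1}\le J_\omega(\phi_\omega)\le d(1).
\]
Since the left-hand side is a sum of two nonnegative terms with $\omega$-independent positive weights, each term is bounded by a constant depending only on $p,q,N$ (through $d(1)$), uniformly in $\omega\in[0,1]$. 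This yields the asserted boundedness of $\{\phi_\omega\}_{0\le\omega\le1}$ in $\dot H^1(\R^N)\cap L^{p+1}(\R^N)$.

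I do not anticipate a genuine obstacle here: the only nontrivial input is Lemma~\ref{lem:3.1}, which is already established, and everything else is an elementary algebraic manipulation of the functionals. The one point worth stating carefully is that the weights $\tfrac12-\tfrac1{q+1}$ and $\tfrac1{p+1}-\tfrac1{q+1}$ are positive and independent of $\omega$, which is what makes the bound uniform rather than merely $\omega$-dependent.
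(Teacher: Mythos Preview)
Your proof is correct and follows exactly the same approach as the paper: bound $J_\omega(\phi_\omega)$ uniformly by $d(1)$ via Lemma~\ref{lem:3.1} and \eqref{eq:3.2}, then read off the $\dot H^1\cap L^{p+1}$ bound from the explicit form of $J_\omega$. You have simply spelled out the last step (positivity and $\omega$-independence of the coefficients) that the paper leaves implicit.
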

\begin{proof}
From Lemma \ref{lem:3.1} and \eqref{eq:3.2} we have
\begin{align*}
\sup_{0\leq \omega\leq 1}J_{\omega}(\phi_{\omega})\leq d(1).
\end{align*}
Hence the conclusion follows from the explicit formula of $J_{\omega}$.  
\end{proof}
\begin{remark}
\label{rem:3.3}
We note that Lemma \ref{lem:3.1} has been proven without using the smoothness of $\omega\mapsto\phi_{\omega}$ which is a delicate problem in general. 
If we use the differentiability of $\omega\mapsto\phi_{\omega}$, 
it follows from the fact $S_{\omega}'(\phi_{\omega})=0$ that 
\begin{align}
\label{eq:3.3}
\frac{d}{d\omega}d(\omega)=\frac12\|\phi_{\omega}\|_{L^2}^2\quad \text{for}~\omega >0,
\end{align}
which yields that $(0,\infty)\ni\omega\mapsto d(\omega)$ is strictly increasing. However \eqref{eq:3.3} does not imply $d(0)<d(\omega)$ for $\omega >0$ because we do not know at this stage whether $\omega\mapsto d(\omega)$ is continuous at $\omega=0$. 
\end{remark}
The following elementary inequality is useful to obtain the decay estimate.
\begin{lemma}
\label{lem:3.4}
Let $\varphi\in C^1([0,\infty))$ be a positive function. If there exist $\rho, A>0$ such that
\[
\varphi'(r)+A\varphi(r)^{1+\rho}\leq 0 \quad \text{for all $r>0$},
\]
then
\[
\varphi(r) \leq \l( \frac{1}{\rho Ar}\r)^{1/\rho}\quad \text{for all $r>0$}.
\]
\end{lemma}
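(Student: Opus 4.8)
The plan is to reduce the differential inequality to an integrable bound by working with the power $\varphi^{-\rho}$, which is the natural quantity since the hypothesis is a Bernoulli-type inequality. Concretely, I would set $\psi(r):=\varphi(r)^{-\rho}$; since $\varphi$ is positive and $C^1$ on $[0,\infty)$, $\psi$ is well-defined and $C^1$ there, with
\[
\psi'(r)=-\rho\,\varphi(r)^{-\rho-1}\varphi'(r).
\]
Dividing the hypothesis $\varphi'(r)+A\varphi(r)^{1+\rho}\le 0$ by the positive quantity $\varphi(r)^{1+\rho}$ gives $\varphi(r)^{-(1+\rho)}\varphi'(r)\le -A$, hence $\psi'(r)\ge \rho A$ for all $r>0$. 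This is the key step, and it is essentially the only nontrivial observation: the rest is integration.

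Next I would integrate this lower bound on $\psi'$. For $r>0$, integrating $\psi'(s)\ge\rho A$ over $s\in(0,r)$ (using $\psi\in C^1([0,\infty))$ and $\psi(0)=\varphi(0)^{-\rho}>0$) yields
\[
\psi(r)\ge\psi(0)+\rho A r>\rho A r.
\]
Unwinding the definition of $\psi$, this says $\varphi(r)^{-\rho}>\rho A r$, i.e.\ $\varphi(r)^{\rho}<1/(\rho A r)$, and taking the positive $\rho$-th root gives exactly $\varphi(r)\le(\rho A r)^{-1/\rho}$ for all $r>0$, as claimed. (One gets the strict inequality, which is stronger than the stated one.)

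I do not anticipate a genuine obstacle here; the only point requiring a small amount of care is the justification that $\psi$ is differentiable and that one may integrate $\psi'$—both are immediate from $\varphi\in C^1([0,\infty))$ and $\varphi>0$. If one wanted to avoid even mentioning $\psi(0)$, one could instead integrate over $(\eps,r)$ and let $\eps\downarrow 0$, or simply note that $\psi(r)-\psi(\eps)\ge\rho A(r-\eps)$ and drop the nonnegative term $\psi(\eps)$. Either way the argument is a one-line computation followed by one integration; the substance of the lemma is entirely in recognizing that $\varphi\mapsto\varphi^{-\rho}$ linearizes the inequality.
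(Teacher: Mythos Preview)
Your proof is correct: the substitution $\psi=\varphi^{-\rho}$ turns the Bernoulli-type inequality into $\psi'\ge\rho A$, and integrating from $0$ gives $\psi(r)\ge\psi(0)+\rho A r>\rho A r$, which is exactly the claim. The paper itself does not give a proof of this lemma, treating it as an elementary fact; your argument is the standard one and fills this gap cleanly.
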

\begin{proposition}
\label{prop:3.5}
Let $\rho:=\max\l\{ \frac{2}{p-1}, N-2\r\}$. Then there exists a constant $C>0$ such that 
\begin{align}
\label{eq:3.4}
\sup_{0\le\omega\le1}\phi_{\omega}(r)\leq Cr^{-\rho}
\quad\text{for large}~r,
\end{align}
where $C$ depends on $p,q,N$.
\end{proposition}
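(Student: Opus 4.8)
The plan is to establish a uniform (in $\omega\in[0,1]$) pointwise upper bound by combining the uniform energy bound from Corollary \ref{cor:3.2} with an ODE comparison argument applied to the radial profile equation \eqref{eq:3.1}. First, from Corollary \ref{cor:3.2} there is a constant $M>0$ with $\|\nabla\phi_\omega\|_{L^2}+\|\phi_\omega\|_{L^{p+1}}\le M$ for all $\omega\in[0,1]$. Since each $\phi_\omega$ is positive, radial and decreasing, the monotonicity gives the classical pointwise decay bound from a single shell: for any $r>0$,
\[
\phi_\omega(r)^{p+1}\,|B_r|
=\int_{|x|\le r}\phi_\omega(r)^{p+1}\,dx
\le \int_{|x|\le r}\phi_\omega(|x|)^{p+1}\,dx
\le \|\phi_\omega\|_{L^{p+1}}^{p+1}\le M^{p+1},
\]
so $\phi_\omega(r)\le C_N M\, r^{-N/(p+1)}$ uniformly in $\omega$. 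This already yields that $\phi_\omega(r)\to0$ as $r\to\infty$, uniformly in $\omega\in[0,1]$; in particular there is $R_0>0$, independent of $\omega$, such that $\phi_\omega(r)^{q-1}\le \tfrac12\phi_\omega(r)^{p-1}$, i.e. $\phi_\omega(r)^{p}-\phi_\omega(r)^{q}\ge \tfrac12\phi_\omega(r)^{p}\ge 0$ for all $r\ge R_0$.

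Next I would feed this into the ODE \eqref{eq:3.1}. Writing \eqref{eq:3.1} as
\[
\bigl(r^{N-1}\phi_\omega'(r)\bigr)'
= r^{N-1}\bigl(\omega\phi_\omega+\phi_\omega^{p}-\phi_\omega^{q}\bigr)\ge 0
\qquad(r\ge R_0),
\]
using $\omega\ge0$ and the sign just obtained, we see $r^{N-1}\phi_\omega'(r)$ is nondecreasing on $[R_0,\infty)$; since $\phi_\omega$ is decreasing this quantity is $\le0$, hence it has a finite nonpositive limit $-\ell_\omega\le0$ as $r\to\infty$. If $\ell_\omega>0$ then $\phi_\omega'(r)\sim -\ell_\omega r^{-(N-1)}$, which upon integration forces $\phi_\omega(r)\to-\infty$ when $N\le2$ (contradiction) and in all cases contradicts $\phi_\omega\to0$ being approached from a decreasing positive function unless the decay is consistent; cleanly, integrating $(r^{N-1}\phi_\omega')'\ge0$ from $r$ to $\infty$ gives $r^{N-1}\phi_\omega'(r)\le -\ell_\omega\le 0$, so $\phi_\omega'(r)\le 0$, which we already know, and more importantly integrating the differential inequality in the right direction will produce the decay. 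Concretely, for $N\ge3$ I would compare with the supersolution $c r^{-(N-2)}$; for the $\tfrac{2}{p-1}$ part (relevant when $p<p^*$) I would instead use Lemma \ref{lem:3.4}: from $-\phi_\omega''-\tfrac{N-1}{r}\phi_\omega'\ge\tfrac12\phi_\omega^{p}$ on $[R_0,\infty)$ one derives, after multiplying by $r^{N-1}$ and integrating, a first-order differential inequality of the form $\psi'(r)+A\psi(r)^{1+\rho}\le0$ for a suitable auxiliary function $\psi$ built from $\phi_\omega$ (for instance an averaged or rescaled version), with $\rho=\tfrac{2}{p-1}$ and $A>0$ independent of $\omega$, and then Lemma \ref{lem:3.4} gives $\psi(r)\le(\rho A r)^{-1/\rho}$, hence the claimed bound for $\phi_\omega$ after undoing the transformation and absorbing the shift $R_0$ into the constant.

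The main obstacle is organizing the comparison so that the resulting constant $C$ is genuinely uniform in $\omega$ and handles the two competing decay rates $\tfrac{2}{p-1}$ and $N-2$ simultaneously, as encoded by $\rho=\max\{\tfrac2{p-1},N-2\}$ (cf. Remark \ref{rem:1.3}). The honest route is: (i) fix the $\omega$-independent $R_0$ from the uniform smallness above; (ii) on $[R_0,\infty)$ build an $\omega$-independent supersolution $\overline\phi(r)=Cr^{-\rho}$ of the inequality $-\Delta\overline\phi\ge \tfrac12\overline\phi^{\,p}$ in the radial sense — this works precisely because $\rho\le\tfrac{2}{p-1}$ makes the nonlinear term dominate when $p\le p^*$ and $\rho=N-2$ makes $-\Delta\overline\phi=0$ dominate the (now negligible) nonlinearity when $p>p^*$, after possibly enlarging $R_0$; (iii) choose $C$ large enough that $\overline\phi(R_0)\ge\sup_{0\le\omega\le1}\phi_\omega(R_0)$, which is finite by Corollary \ref{cor:3.2} and elliptic regularity; (iv) apply the comparison principle for the operator $-\Delta+V_\omega$ with $V_\omega:=\phi_\omega^{p-1}-\phi_\omega^{q-1}+\omega\ge0$ on $[R_0,\infty)$ to conclude $\phi_\omega\le\overline\phi$ there, and finally absorb the bounded region $[0,R_0]$ into the constant. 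Getting the supersolution inequality to hold at the borderline $\rho=\tfrac2{p-1}$ (where $-\Delta\overline\phi$ and $\overline\phi^{\,p}$ scale identically) requires a careful choice of the coefficient, which is where I expect the bookkeeping to be most delicate; Lemma \ref{lem:3.4} is the clean substitute that sidesteps an explicit comparison function in that regime.
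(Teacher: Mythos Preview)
Your overall plan matches the paper's: use Corollary~\ref{cor:3.2} and monotonicity to get a uniform crude decay $\phi_\omega(r)\lesssim r^{-N/(p+1)}$, fix an $\omega$-independent $R_0$ beyond which $\phi_\omega^q\le\tfrac12\phi_\omega^p$, and then handle the two target rates $r^{-(N-2)}$ and $r^{-2/(p-1)}$ separately. Your treatment of the $r^{-(N-2)}$ bound via comparison with the harmonic function $C|x|^{-(N-2)}$ and the maximum principle is exactly the paper's argument. (The digression about the limit $\ell_\omega$ of $r^{N-1}\phi_\omega'$ is unnecessary.)

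The $r^{-2/(p-1)}$ part, however, has a sign error and a missing manipulation. From \eqref{eq:3.1} on $r\ge R_0$ one has
\[
\phi_\omega''+\frac{N-1}{r}\phi_\omega'=\omega\phi_\omega+\phi_\omega^p-\phi_\omega^q\ge\tfrac12\phi_\omega^p,
\]
i.e.\ $\Delta\phi_\omega\ge\tfrac12\phi_\omega^p$, which is the \emph{opposite} of your stated inequality $-\phi_\omega''-\tfrac{N-1}{r}\phi_\omega'\ge\tfrac12\phi_\omega^p$. With the correct sign, your supersolution condition $-\Delta\overline\phi\ge\tfrac12\overline\phi^{\,p}$ cannot hold for $\overline\phi=Cr^{-2/(p-1)}$ when $p<p^*$, since then $-\Delta\overline\phi<0$; so the comparison route, as written, breaks down precisely in the regime where the rate $2/(p-1)$ is relevant. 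The paper's way out is to use $\phi_\omega'<0$ to drop the friction term, obtaining the clean one-dimensional inequality
\[
\phi_\omega''\ge\tfrac12\phi_\omega^p\qquad(r\ge R_0),
\]
and then multiply by $\phi_\omega'$ (not by $r^{N-1}$) and integrate over $[r,\infty)$ to get $(\phi_\omega')^2\ge\tfrac{1}{p+1}\phi_\omega^{p+1}$, hence
\[
\phi_\omega'+(p+1)^{-1/2}\,\phi_\omega^{(p+1)/2}\le0.
\]
This is already in the exact form required by Lemma~\ref{lem:3.4} with $\varphi=\phi_\omega$ itself, $1+\rho=(p+1)/2$, and an $\omega$-independent constant; no auxiliary ``averaged or rescaled'' function is needed. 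The resulting bound $\phi_\omega(r)\lesssim r^{-2/(p-1)}$ is then uniform in $\omega\in[0,1]$.
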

\begin{proof}
We first note that
\begin{align*}
\sup_{x\in\R^N}|x|^{\frac{N}{p+1}}|\phi_{\omega}(x)| \leq C\| \phi_{\omega}\|_{L^{p+1}}
\end{align*}
for some $C$ depending on $p$ and $N$, which follows from radial decreasing property of $\phi_{\omega}$ (see the proof of Proposition \ref{prop:A.1}). Combined with Corollary \ref{cor:3.2}, we obtain that
\begin{align}
\label{eq:3.5}
\sup_{0\le\omega\le1}| \phi_{\omega}(x)|\cleq |x|^{-\frac{N}{p+1}} \quad\text{for}~x\in\R^N\setminus\{0\}.
\end{align}
From this estimate, there exists some $r_0>0$, which is independent of $\omega\in[0,1]$, such that
\begin{align}
\label{eq:3.6}
\phi_{\omega}(r)^q \leq \frac{1}{2}\phi_{\omega}(r)^p \quad \text{for}~r\geq r_0.
\end{align}

We now prove the following estimate:
\begin{align}
\label{eq:3.7}
\sup_{0\le\omega\le 1}\phi_{\omega}(r)\leq Cr^{-\frac{2}{p-1}}
\quad\text{for large}~r.
\end{align}
Using the equation \eqref{eq:3.1}, $\phi_{\omega}'(r) <0$, and \eqref{eq:3.6}, we have
\begin{align*}
\phi_{\omega}''\geq \phi_{\omega}'' +\frac{N-1}{r}\phi_{\omega}' =\omega\phi_{\omega}+\phi_{\omega}^p -\phi_{\omega}^q \geq \frac{1}{2}\phi_{\omega}^p \quad\text{for}~r\geq r_0,~\omega\in [0,1].
\end{align*}  
Multiplying the inequality by $\phi_{\omega}'$ and integrating it on $[r, \infty)$, we have
\begin{align*}
\phi_{\omega}'(r)^2 \geq \frac{1}{p+1}\phi_{\omega}(r)^{p+1}
\quad \text{for}\ r\geq r_0,~\omega\in [0,1].
\end{align*}
Since $\phi_{\omega}'(r) <0$ we deduce that
\begin{align*}
\phi_{\omega}'(r) +\sqrt{\frac{1}{p+1}}\, \phi_{\omega}(r)^{\frac{p+1}{2}} \leq 0 
\quad \text{for all $r\geq r_0$,\ $\omega\in [0,1]$}.
\end{align*}
Hence, applying Lemma \ref{lem:3.4}, we obtain \eqref{eq:3.7}.

Next we assume $N\ge 3$ and prove the following estimate:
\begin{align}
\label{eq:3.8}
\sup_{0\le\omega\le1}\phi_{\omega}(r)\leq Cr^{-(N-2)}
\quad\text{for large}~r.
\end{align}
Let $\Phi=C|x|^{-(N-2)}$. From \eqref{eq:3.5} we can choose $C>0$, which is independent of $\omega\in [0,1]$, such that
\begin{align*}
\sup_{0\le\omega\le1}\phi_{\omega}(r_0) \le Cr_0^{-(N-2)}=\Phi(r_0).
\end{align*}
From the equation \eqref{EL} and \eqref{eq:3.6} we have
\begin{align*}
\Delta (\phi_{\omega} -C\Phi)=\omega\phi_{\omega}+\phi_{\omega}^p-\phi_{\omega}^{q}\ge\frac{1}{2}\phi_{\omega}^p>0 \quad\text{for}~|x|>r_0,~\omega\in [0,1].
\end{align*}
Here we set $\psi_{\omega}:= \phi_{\omega} -C\Phi$. We note that $\psi_{\omega}$ satisfies
\begin{align*}
\l\{
\begin{aligned}
&\Delta\psi_{\omega} >0&&\text{in}~\l\{x\in\R^N : |x|>r_0 \r\},
\\
&\psi_{\omega} \le 0&&\text{on}~|x|=r_0,
\\
&\psi_{\omega} \to 0&&\text{as}~|x|\to\infty.
\end{aligned}
\r.
\end{align*}
Hence we obtain by the maximum principle that 
\begin{align*}
\sup_{0\le\omega\le1}\psi_{\omega} \le 0\quad \text{for}~|x|\ge r_0, 
\end{align*}
which proves \eqref{eq:3.8}. This completes the proof.
\end{proof}
\begin{remark}
\label{rem:3.6}
The decay estimate \eqref{eq:3.4} is better than \eqref{eq:3.5} because
\begin{align*}
-\frac{2}{p-1}<-\frac{N}{p+1} &\iff (N-2)p<N+2.
\end{align*}
If $N\geq 2$, it follows from Lemma \ref{lem:A.2} and Corollary \ref{cor:3.2} that
\begin{align}
\label{eq:3.9}
\sup_{0\le\omega\le1}|\phi_{\omega}(x)| \cleq |x|^{-\frac{2}{p+3}(N-1)}
\quad\text{for}~x\in\R^N\setminus\{0\}.
\end{align}
Similarly, \eqref{eq:3.4} is better than \eqref{eq:3.9} because
\begin{align*}
-\frac{2}{p-1} < -\frac{2}{p+3}(N-1) \iff (N-2)p<N+2.
\end{align*}
\end{remark}
If $p=p^*$, the decay estimate \eqref{eq:3.4} is improved as follows:
\begin{proposition}
\label{prop:3.7}
Assume $N\ge3$ and $p=p^*$. Then there exists a constant $C>0$ such that 
\begin{align}
\label{eq:3.10}
\sup_{0\le\omega\le1}\phi_{\omega}(r)\leq Cr^{-(N-2)}\l(\log r\r)^{-\frac{N-2}{2}} \quad\text{for large}~r,
\end{align}
where $C$ depends on $p,q,N$.
\end{proposition}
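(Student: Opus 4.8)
The plan is to construct an explicit radial supersolution of the model equation $\Delta u=\frac12 u^{p}$ that decays exactly like $r^{-(N-2)}(\log r)^{-(N-2)/2}$, and then to compare it with $\phi_\omega$ via the maximum principle, uniformly for $\omega\in[0,1]$. First I would recall from the proof of Proposition~\ref{prop:3.5} that there are $r_0>0$ and $C_0>0$, independent of $\omega\in[0,1]$, with $\phi_\omega(r)^q\le\frac12\phi_\omega(r)^p$ and $\phi_\omega(r)\le C_0 r^{-(N-2)}$ for all $r\ge r_0$; both properties persist when $r_0$ is replaced by any larger radius. Hence, by \eqref{EL} and $\omega\phi_\omega\ge0$,
\[
\Delta\phi_\omega=\omega\phi_\omega+\phi_\omega^p-\phi_\omega^q\ge\tfrac12\phi_\omega^p\qquad\text{on }\{|x|>r_0\},
\]
so $\phi_\omega$ is a (classical, by Lemma~\ref{lem:2.9}) subsolution of $\Delta u=\frac12u^p$ there, and $\phi_\omega(x)\to0$ as $|x|\to\infty$.

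For the supersolution, set $\beta:=\frac{N-2}{2}=\frac{1}{p^*-1}$ and $v_A(x):=A|x|^{-(N-2)}(\log|x|)^{-\beta}$ for $A>0$. Computing the radial Laplacian of $v_A$ and using $p=p^*$, i.e.\ $(N-2)p=N$ and $\beta p=\frac{N}{2}=\beta+1$, one sees that the powers of $r$ and of $\log r$ in $\Delta v_A$ and in $v_A^p$ coincide, and that for $|x|\ge e$
\[
\Delta v_A-\tfrac12 v_A^{p}=A|x|^{-N}(\log|x|)^{-N/2}\Bigl[\tfrac{(N-2)^2}{2}+\tfrac{N(N-2)}{4}(\log|x|)^{-1}-\tfrac12 A^{p-1}\Bigr].
\]
Thus $v_A$ is a supersolution of $\Delta u=\frac12u^p$ on $\{|x|\ge R\}$ whenever $R\ge e$ and $A^{p-1}\ge(N-2)^2+\frac{N(N-2)}{2}=:\kappa_N$. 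The only real difficulty is to arrange $A$ and $R$ so that this holds simultaneously with $v_A\ge\phi_\omega$ on $\{|x|=R\}$; since $\phi_\omega(R)\le C_0 R^{-(N-2)}$, the boundary condition forces $A\ge C_0(\log R)^{\beta}$. These requirements are compatible because enlarging $A$ only makes the supersolution inequality easier: using $\beta(p-1)=1$, it suffices to fix $R$ large enough (depending only on $p,q,N$, and $\ge\max\{e,r_0\}$) that $A_R:=C_0(\log R)^{\beta}$ satisfies $A_R^{p-1}=C_0^{p-1}\log R\ge\kappa_N$, and then to take $v:=v_{A_R}$, for which $v(R)=A_R R^{-(N-2)}(\log R)^{-\beta}=C_0 R^{-(N-2)}\ge\phi_\omega(R)$ for every $\omega\in[0,1]$.

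Finally I would invoke the comparison principle on $\Omega=\{|x|>R\}$. Since $s\mapsto\frac12 s^p$ is nondecreasing on $[0,\infty)$, $\Delta\phi_\omega\ge\frac12\phi_\omega^p$ and $\Delta v\le\frac12 v^p$ on $\Omega$, $\phi_\omega\le v$ on $\partial\Omega$, and $\phi_\omega-v\to0$ at infinity, a positive maximum of $\phi_\omega-v$ would be attained at an interior point $x_*\in\Omega$, where $0\ge\Delta(\phi_\omega-v)(x_*)\ge\frac12\bigl(\phi_\omega(x_*)^p-v(x_*)^p\bigr)>0$, a contradiction; hence $\phi_\omega\le v$ throughout $\Omega$. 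This yields $\phi_\omega(r)\le A_R\,r^{-(N-2)}(\log r)^{-(N-2)/2}$ for all $r\ge R$ and all $\omega\in[0,1]$, with $A_R$ and $R$ depending only on $p,q,N$, which is \eqref{eq:3.10}. (For $\omega>0$ this estimate is of course far from sharp, $\phi_\omega$ decaying exponentially, but it holds with a uniform constant all the same.)
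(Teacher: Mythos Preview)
Your proof is correct and follows essentially the same route as the paper: build a radial comparison function of the form $A\,r^{-(N-2)}(\log r)^{-(N-2)/2}$, match it on a sphere $\{|x|=R\}$ using the uniform bound $\phi_\omega(R)\le C_0R^{-(N-2)}$ from Proposition~\ref{prop:3.5}, and conclude by the maximum principle on the exterior domain. Your execution is slightly streamlined in that you first reduce to the inequality $\Delta\phi_\omega\ge\tfrac12\phi_\omega^{p}$ (absorbing the $\phi_\omega^q$ term at the outset) and then apply a direct nonlinear comparison, whereas the paper keeps the full equation, linearizes via the difference quotient $c=(\phi_\omega^{p^*}-(\eta S)^{p^*})/(\phi_\omega-\eta S)$, and invokes the maximum principle for $\Delta-c$; both routes yield the same uniform bound.
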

\begin{proof}
We follow the argument in \cite{DSW11}. We set 
\begin{align*}
S(x):=\kappa |x|^{-(N-2)}\l( \log |x|\r)^{-\frac{N-2}{2}},~\text{where}~\kappa:= \l( \frac{N-2}{\sqrt{2}}\r)^{N-2}.
\end{align*}
By a direct calculation we have
\begin{align}
\label{eq:3.11}
\Delta S =S^{p^*}+\frac{N}{2(N-2)}\frac{S^{p^*}}{\log r}.
\end{align}
Fix $R>0$ to be determined later. From \eqref{eq:3.5} we can choose $\eta>1$, which is independent of $\omega\in [0,1]$, such that
\begin{align}
\label{eq:3.12}
\sup_{0\le\omega\le1}\phi_{\omega}(R) \le \eta S(R).
\end{align}
From \eqref{eq:3.11} we have
\begin{align*}
\Delta (\eta S)-(\eta S)^{p^*}=(\eta -\eta^{p^*})S^{p^*}+\frac{\eta N}{2(N-2)}\frac{S^{p^*}}{\log r}.
\end{align*}
For any $\omega\in [0,1]$ we obtain that
\begin{align*}
\Delta (\phi_{\omega}-\eta S)-\frac{\phi_{\omega}^{p^*}-(\eta S)^{p^*}}{\phi_{\omega}-\eta S}\l(\phi_{\omega}-\eta S\r)
&=\Delta \phi_{\omega}-\phi_{\omega}^{p^*} -\l[ \Delta(\eta S)-(\eta S)^{p^*}\r]
\\
&=\omega\phi_{\omega}-\phi_{\omega}^q+(\eta^{p^*}-\eta)S^{p^*}-\frac{\eta N}{2(N-2)}\frac{S^{p^*}}{\log r}
\\
&\ge S^{p^*}\l( (\eta^{p^*}-\eta)- \frac{\eta N}{2(N-2)\log r}-\frac{\phi_{\omega}^q}{S^{p^*}}\r).
\end{align*}
By Proposition \ref{prop:3.5} we obtain that
\begin{align*}
\sup_{0\le\omega\le1}\frac{\phi_{\omega}^q}{S^{p^*}}\cleq \l(\log r\r)^{\frac{p^*(N-2)}{2}}r^{-(q-p^*)(N-2)}\underset{r\to\infty}{\longrightarrow} 0.
\end{align*}
Hence there exists a large $R$, which is independent of $\omega\in [0,1]$, such that
\begin{align}
\label{eq:3.13}
\Delta (\phi_{\omega}-\eta S)-\frac{\phi_{\omega}^{p^*}-(\eta S)^{p^*}}{\phi_{\omega}-\eta S}\l(\phi_{\omega}-\eta S\r)\ge S^{p^*}(\eta^{p^*}-2\eta-1) >0 \quad\text{for}~|x|> R,
\end{align}
where we take $\eta$ large enough so that $\eta^{p^*}-2\eta-1>0$.
Now we set 
\begin{align*}
\Psi_{\omega}:= \phi_{\omega} -\eta S,~c:=\frac{\phi_{\omega}^{p^*}-(\eta S)^{p^*}}{\phi_{\omega}-\eta S}.
\end{align*}
From the monotonicity of $s\mapsto s^{p^*}$, we note that $c>0$.
It follows from \eqref{eq:3.12} and \eqref{eq:3.13} that $\Psi_{\omega}$ satisfies
\begin{align*}
\l\{
\begin{aligned}
&\Delta\Psi_{\omega}-c\Psi_{\omega}>0&&\text{in}~\l\{x\in\R^N : |x|>R \r\},
\\
&\Psi_{\omega} \le 0&&\text{on}~|x|=R,
\\
&\Psi_{\omega} \to 0&&\text{as}~|x|\to\infty.
\end{aligned}
\r.
\end{align*}
Hence we obtain by the maximum principle that 
\begin{align*}
\sup_{0\le\omega\le1}\Psi_{\omega} \le 0\quad\text{for}~|x|\ge R,
\end{align*}
which proves \eqref{eq:3.10}. 
\end{proof}
\subsection{Zero mass limit}
\label{sec:3.2}
We now complete the proof of Theorem \ref{thm:1.6}.
\begin{proof}[Proof of Theorem \ref{thm:1.6}]
We proceed in four steps.\\
{\bf Step 1.} $\inf_{\omega >0}\| \phi_{\omega}\|_{L^{q+1}}^{q+1}>0$.
From Lemma \ref{lem:3.1} we have
\begin{align}
\label{eq:3.14}
0<d(0)\leq \inf_{\omega >0}d(\omega) .
\end{align}
From \eqref{eq:2.3} and $K_{\omega}(\phi_{\omega})=0$ we note that
\begin{align}
\label{eq:3.15}
d(\omega)=-\frac{p-1}{2(p+1)}\| \phi_{\omega}\|_{L^{p+1}}^{p+1} +
\frac{q-1}{2(q+1)}\| \phi_{\omega}\|_{L^{q+1}}^{q+1}
\quad\text{for}~\omega\geq 0.
\end{align}
From \eqref{eq:3.14} and \eqref{eq:3.15} we deduce that $\inf_{\omega >0}\| \phi_{\omega}\|_{L^{q+1}}^{q+1}>0$.
\\
{\bf Step 2.} Limits. From Corollary \ref{cor:3.2}, there exists a radial function $\psi\in\dot{H}^1(\R^N)\cap L^{p+1}(\R^N)$ and a sequence $\{\omega_j\}\subset(0,1)$
converging to $0$ such that 
\begin{align*}
\phi_{\omega_j} \wto\psi ~\text{weakly in}~\dot{H}^1(\R^N)\cap L^{p+1}(\R^N) ~\text{as}~j\to\infty.
\end{align*}
Applying radial compactness lemma (see Appendix \ref{sec:A}), we have
\begin{align}
\label{eq:3.16}
\phi_{\omega_j} \to\psi ~\text{strongly in}~L^{r}(\R^N)~\text{for any}~r\in (p+1, 2^*).
\end{align}
It follows from Step 1 and \eqref{eq:3.16} that $\psi\neq 0$. One can easily prove that $\psi$ is a positive radial solution of \eqref{EL}. By uniqueness of positive radial solutions to \eqref{EL} we deduce that $\psi=\phi_0$.
\\
{\bf Step 3.} Strong convergence. 
From \eqref{eq:3.16}, taking a subsequence of $\{ \omega_j\}$ (still denoted by the same letter), we have $\phi_{\omega_j}\to\psi$ a.e.\ in $\R^N$. 
Applying Lemma \ref{lem:2.3}, we have
\begin{align}
\label{eq:3.17}
&K_0(\phi_{\omega_j})-K_{0}(\phi_{\omega_j}-\phi_0)-K_0(\phi_0) \to 0.
\end{align} 
Here we note that
\begin{align}
\label{eq:3.18}
K_{0}(\phi_{\omega}) =K_{\omega}(\phi_{\omega})-\omega\| \phi_{\omega}\|_{L^2}^2 =
-\omega\| \phi_{\omega}\|_{L^2}^2<0 \quad\text{for}~\omega >0,
\end{align}
which yields that $\limsup_{j\to\infty}K_0(\phi_{\omega_j})\le 0$. 
Combined with \eqref{eq:3.17} and $K_0(\phi_0) =0$, we obtain that
\begin{align}
\label{eq:3.19}
\limsup_{j\to\infty}K_0 (\phi_{\omega_j}-\phi_0) \le 0.
\end{align}
By \eqref{eq:3.16}, \eqref{eq:3.19}, and the following relation
\begin{align*}
\| \nabla (\phi_{\omega_j}-\phi_0)\|_{L^2}^2+\| \phi_{\omega_j}-\phi_0\|_{L^{p+1}}^{p+1} 
= K_0 (\phi_{\omega_j}-\psi)+\| \phi_{\omega_j}-\phi_0\|_{L^{q+1}}^{q+1},
\end{align*}
we deduce that 
\begin{align*}
\phi_{\omega_j} \to\phi_0 ~\text{strongly in}~\dot{H}^1(\R^N)\cap L^{p+1}(\R^N).
\end{align*}
We note that this convergence does not depend on a sequence $\{ \omega_j\}$ converging to $0$.
Hence we deduce that
\begin{align}
\label{eq:3.20}
\phi_{\omega}\to\phi_0 ~\text{strongly in}~\dot{H}^1(\R^N)\cap L^{p+1}(\R^N)~\text{as $\omega\downarrow0$}.
\end{align}
{\bf Step 4.} Improvement of convergence. We now prove the final remark in Theorem \ref{thm:1.6}. Under the assumption \eqref{eq:1.8} it is enough to prove that
\begin{align}
\label{eq:3.21}
\phi_{\omega}\to\phi_0 ~\text{strongly in}~L^2(\R^N)~\text{as $\omega\downarrow0$}.
\end{align}
We first consider the case 
\begin{align}
\label{eq:3.22}
p<1+4/N~\text{or}~N\ge 5.
\end{align}
Given $R>0$ to be chosen later, we have
\begin{align}
\label{eq:3.23}
\| \phi_{\omega}-\phi_0 \|_{L^2(\R^N)}^2
&=\| \phi_{\omega}-\phi_0 \|_{L^2(B_R)}^2+\| \phi_{\omega}-\phi_0 \|_{L^2(|x|\geq R)}^2, 
\end{align}
where $B_R=\{ x\in\R^N : |x|<R \}$. From Proposition \ref{prop:3.5} we have
\begin{align*}
\sup_{0<\omega <1}\| \phi_{\omega}-\phi_0 \|_{L^2(|x|\geq R)}\cleq  R^{-\rho+\frac{N}{2}},
\end{align*}
where $\rho=\max\l\{ \frac{2}{p-1}, N-2\r\}$. We note that $\eqref{eq:3.22}$ implies $-\rho+\frac{N}{2}<0$. Hence, for $\eps >0$ one can take $R$ large such that
\begin{align}
\label{eq:3.24}
\sup_{0<\omega <1}\| \phi_{\omega}-\phi_0 \|_{L^2(|x|\geq R)} <\eps.
\end{align}
By H\"older's inequality we have
\begin{align*}
\| \phi_{\omega}-\phi_0 \|_{L^2(B_R)} \leq |B_R|^{\frac{1}{2}-\frac{1}{p+1}}\| \phi_{\omega}-\phi_0\|_{L^{p+1}(B_R)}.
\end{align*}
By \eqref{eq:3.20} the right-hand side goes to $0$ as $\omega\downarrow 0$. Hence, combined with \eqref{eq:3.23} and \eqref{eq:3.24}, we deduce that 
\begin{align*}
\limsup_{\omega\downarrow 0}\| \phi_{\omega}-\phi_0 \|_{L^2(\R^N)}\leq \eps,
\end{align*}
which proves \eqref{eq:3.21}.

For the remaining case $N=4$ and $p=p^*(4)=2$, it follows from Proposition \ref{prop:3.7} that
\begin{align*}
\sup_{0<\omega <1}\| \phi_{\omega}-\phi_0 \|_{L^2(|x|\geq R)}
\cleq \l( \log R\r)^{-\frac{1}{2}}.  
\end{align*}
Therefore, \eqref{eq:3.21} is proven in the same way as above. This completes the proof.
\end{proof}

\section{Strong instability for the case $q\geq 1+4/N$}
\label{sec:4}

In this section we prove Theorem~\ref{thm:1.8}.
Throughout this section we assume that $1<p<q<2^*-1$ and $q\ge1+4/N$.

The proofs of blowup or instability of standing waves rely on the virial identity. More precisely, we use the following fact: If the initial data $u_0$ of \eqref{NLS} belongs to the function space
\begin{equation*}
\Sigma
:=\{v\in H^1(\R^N):\|xv\|_{L^2}<\infty\},
\end{equation*}
the corresponding $H^1$-solution $u$ of \eqref{NLS} belongs to $C(I_{\max},\Sigma)$, where 
$I_{\max}:=(-T_{\rm min}, T_{\rm max})$ is a maximal interval of $H^1$-solution in Section \ref{sec:1.1}.
Moreover, the function $t\mapsto\|xu(t)\|_{L^2}^2$ is in $C^2(I_{\rm max})$, and the second derivative is expressed as
\begin{equation}\label{eq:4.1}
\dfrac{d^2}{dt^2}\|xu(t)\|_{L^2}^2
=8P(u(t))
\end{equation}
for all $t\in I_{\max}$, where
\begin{align}
\label{eq:4.2}
P(v)
&:=\|\nabla v\|_{L^2}^2
+\frac{\alpha}{p+1}\|v\|_{L^{p+1}}^{p+1}
-\frac{\beta}{q+1}\|v\|_{L^{q+1}}^{q+1}, \\
\label{eq:4.3}
\alpha
&:=\frac{N(p-1)}{2},\quad
\beta
:=\frac{N(q-1)}{2}.
\end{align}
In what follows we often use the following relation
\begin{equation}\label{eq:4.4}
P(v)
=\bigl.\del_\lambda S_\omega(v^\lambda)\bigr|_{\lambda=1},\quad
\text{where}~v^\lambda(x):=\lambda^{N/2}v(\lambda x).
\end{equation}

\subsection{Blowup}

We define a set $\scB_{\omega}$ by
\begin{equation}\label{eq:4.5}
\scB_\omega
:=\{v\in H^1(\R^N):
S_\omega(v)<\mu(\omega),~P(v)<0\},
\end{equation}
where $\mu(\omega)$ is defined in Section \ref{sec:2.1}.
In this subsection, we prove the following blowup result.
\begin{proposition}\label{prop:4.1}
If $u_0\in\scB_\omega\cap\Sigma$, then the solution $u(t)$ of \eqref{NLS} blows up both forward and backward in finite time.
\end{proposition}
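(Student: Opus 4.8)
The plan is to run the classical Berestycki--Cazenave concavity argument, using the invariance of the set $\scB_\omega$ under the flow together with a uniform sign condition on $P$. First I would establish the key invariance: if $u_0\in\scB_\omega$, then $u(t)\in\scB_\omega$ for all $t$ in the maximal existence interval. The conservation laws give $S_\omega(u(t))=S_\omega(u_0)<\mu(\omega)$ for all $t$ (here one uses $\|u(t)\|_{L^2}^2=\|u_0\|_{L^2}^2$ and $E(u(t))=E(u_0)$, so $S_\omega$ is conserved). For the condition $P(u(t))<0$: suppose $P(u(t_0))=0$ at some first time $t_0$. Since $\lambda\mapsto K_\omega(u(t_0)^\lambda)$ has the right shape and $P(u(t_0))=\del_\lambda S_\omega(u(t_0)^\lambda)|_{\lambda=1}$, one shows that a scaling of $u(t_0)$ lies on the Nehari manifold $\scK_\omega$, which would force $S_\omega(u(t_0))\ge \mu(\omega)$ by Lemma~\ref{lem:2.6} and the characterization $\mu(\omega)=\inf\{J_\omega(v):v\in X\setminus\{0\},\ K_\omega(v)\le 0\}$ --- a contradiction. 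Actually the cleaner route is to show directly that $P(v)<0$ together with $S_\omega(v)<\mu(\omega)$ implies a quantitative bound $P(v)\le -c$ for some $c=c(u_0)>0$; I elaborate below.

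Next I would derive the uniform negativity of $P$ on the trajectory. Combining the expression \eqref{eq:4.2} for $P$ with $S_\omega(v)=\frac{1}{2}\|\nabla v\|_{L^2}^2+\tfrac{\omega}{2}\|v\|_{L^2}^2+\tfrac1{p+1}\|v\|_{L^{p+1}}^{p+1}-\tfrac1{q+1}\|v\|_{L^{q+1}}^{q+1}$, one eliminates the $L^{q+1}$ term to write
\begin{align*}
\beta S_\omega(v)-P(v)
=\Bigl(\tfrac{\beta}{2}-1\Bigr)\|\nabla v\|_{L^2}^2
+\tfrac{\beta\omega}{2}\|v\|_{L^2}^2
+\Bigl(\tfrac{\beta}{p+1}-\tfrac{\alpha}{p+1}\Bigr)\|v\|_{L^{p+1}}^{p+1},
\end{align*}
and since $q\ge 1+4/N$ gives $\beta\ge 2$, while $q>p$ gives $\beta>\alpha$, the right-hand side is nonnegative, hence $P(v)\le\beta S_\omega(v)$. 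If moreover $P(v)<0$, a standard argument (scaling $v^\lambda$ and using that $\lambda\mapsto S_\omega(v^\lambda)$ attains an interior maximum, whose value is at least $\mu(\omega)$ because the maximizer satisfies $P=0$ hence lies on $\scK_\omega$ after a further rescaling, or more directly because $\mu(\omega)\le\sup_{\lambda>0}S_\omega(v^\lambda)$ whenever $P(v)\le 0$) yields $P(v)\le \beta\bigl(S_\omega(v)-\mu(\omega)\bigr)<0$. Applying this with $v=u(t)$ and using conservation gives $P(u(t))\le\beta(S_\omega(u_0)-\mu(\omega))=:-2\delta<0$ uniformly in $t$.

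Then I would close with the virial/concavity argument: set $f(t):=\|xu(t)\|_{L^2}^2$. By \eqref{eq:4.1}, $f''(t)=8P(u(t))\le -16\delta<0$ for all $t\in I_{\max}$, so $f$ is concave, positive, and its second derivative is bounded above by a negative constant; hence $f(t)$ would become negative in finite time in both time directions, which is impossible. Therefore $I_{\max}$ is a bounded interval, i.e.\ $T_{\max}<\infty$ and $T_{\min}<\infty$, and by the blowup criterion in Section~\ref{sec:1.1} the solution blows up forward and backward in finite time. One technical point to handle: one must know $u_0\in\Sigma\Rightarrow u\in C(I_{\max},\Sigma)$ with $f\in C^2$ and the virial identity \eqref{eq:4.1} valid --- this is exactly the input recalled at the start of Section~\ref{sec:4}, so it may be cited.

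\textbf{Main obstacle.} The delicate step is the implication ``$S_\omega(v)<\mu(\omega)$ and $P(v)<0$'' $\Rightarrow$ ``$P(v)\le\beta(S_\omega(v)-\mu(\omega))$'', i.e.\ connecting the virial functional $P$ to the Nehari level $\mu(\omega)$. The point is that when $P(v)\le 0$ one can rescale $v\mapsto v^\lambda$ to a function on (or below) the Nehari manifold $\scK_\omega$ without increasing $S_\omega$ beyond $\sup_\lambda S_\omega(v^\lambda)$, and then invoke the variational characterization $\mu(\omega)=d(\omega)$ from Theorem~\ref{thm:2.1} together with \eqref{eq:2.5}. Making the scaling comparison precise (the map $\lambda\mapsto K_\omega(v^\lambda)$ behaves well only because $p,q>1$, and one needs $K_\omega(v^\lambda)\le 0$ for some $\lambda$, which follows from $P(v)\le 0$ after relating $P$ to $\del_\lambda$ of the relevant functional) is where the real work lies; everything else is bookkeeping with the conservation laws and the concavity lemma.
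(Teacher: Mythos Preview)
Your overall architecture --- invariance of $\scB_\omega$, a uniform negative upper bound for $P$ along the flow, then the virial concavity --- is exactly the paper's. The place where your sketch and the paper diverge is the step you correctly flag as the obstacle: turning ``$S_\omega(v)<\mu(\omega)$ and $P(v)\le 0$'' into a quantitative bound of the form $P(v)\le c\bigl(S_\omega(v)-\mu(\omega)\bigr)$.

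Your identity $\beta S_\omega(v)-P(v)\ge 0$ is correct, but it only gives $P(v)\le\beta S_\omega(v)$, and the jump from there to $P(v)\le\beta\bigl(S_\omega(v)-\mu(\omega)\bigr)$ is not supplied by the scaling heuristic you describe. Knowing that $\mu(\omega)\le\sup_{\lambda>0}S_\omega(v^\lambda)$ does not by itself produce the subtraction of $\mu(\omega)$ with the coefficient $\beta$; also note that the maximizer $v^{\lambda^*}$ of $\lambda\mapsto S_\omega(v^\lambda)$ satisfies $P(v^{\lambda^*})=0$, not $K_\omega(v^{\lambda^*})=0$, so ``hence lies on $\scK_\omega$'' is not immediate. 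This is a genuine gap in the argument as written.

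The paper closes the gap (Lemma~\ref{lem:4.3}) by introducing the auxiliary function
\[
f(\lambda):=S_\omega(v^\lambda)-\tfrac{\lambda^2}{2}P(v),
\]
checking that $f'(1)=0$ and that the shape of $f$ (using $\beta\ge 2>\alpha$ in the supercritical case, and a separate computation when $\beta=2$) forces $f(1)=\max_{\lambda>0}f(\lambda)$. Separately one shows (Lemma~\ref{lem:4.2}) that $P(v)\le 0$ guarantees some $\lambda_0>0$ with $K_\omega(v^{\lambda_0})=0$; evaluating $f$ at $\lambda_0$ and using $\mu(\omega)\le S_\omega(v^{\lambda_0})$ then gives
\[
\mu(\omega)\le S_\omega(v^{\lambda_0})\le f(\lambda_0)\le f(1)=S_\omega(v)-\tfrac12 P(v),
\]
i.e.\ $P(v)\le 2\bigl(S_\omega(v)-\mu(\omega)\bigr)$. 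So the correct constant is $2$, not $\beta$; since any positive constant suffices for the concavity conclusion, this does not affect the rest of your outline.
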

\begin{lemma}
\label{lem:4.2}
Let $v\in H^1(\R^N)$.
When $q=1+4/N$, we further assume that $P(v)\le0$.
Then there exists $\lambda_0>0$ such that $K_\omega(v^{\lambda_0})=0$.
\end{lemma}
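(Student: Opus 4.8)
The plan is to reduce the claim to an elementary one–variable statement about the function $g(\lambda):=K_\omega(v^{\lambda})$ on $(0,\infty)$. If $v=0$ there is nothing to prove (any $\lambda_0$ works), so assume $v\neq0$. Using $v^{\lambda}(x)=\lambda^{N/2}v(\lambda x)$ and the change of variables $y=\lambda x$, one computes
\[
g(\lambda)=\lambda^2\|\nabla v\|_{L^2}^2+\omega\|v\|_{L^2}^2+\lambda^{\alpha}\|v\|_{L^{p+1}}^{p+1}-\lambda^{\beta}\|v\|_{L^{q+1}}^{q+1},
\]
with $\alpha=N(p-1)/2$ and $\beta=N(q-1)/2$ as in \eqref{eq:4.3}; note that the $L^2$-term is scale invariant. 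Since $1<p<q$ we have $0<\alpha<\beta$, and since $q\ge1+4/N$ we have $\beta\ge2$, with $\beta=2$ exactly when $q=1+4/N$ (in which case also $\alpha<2$). Also $\|v\|_{L^{p+1}},\|v\|_{L^{q+1}}>0$ because $v\neq0$. Since $g$ is continuous (indeed smooth) on $(0,\infty)$, it suffices to show that $g$ is positive near $0$ and negative somewhere, and then invoke the intermediate value theorem.

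For small $\lambda>0$, since $\alpha<\beta$ and the first two terms of $g$ are nonnegative,
\[
g(\lambda)\ge\lambda^{\alpha}\|v\|_{L^{p+1}}^{p+1}-\lambda^{\beta}\|v\|_{L^{q+1}}^{q+1}
=\lambda^{\alpha}\bigl(\|v\|_{L^{p+1}}^{p+1}-\lambda^{\beta-\alpha}\|v\|_{L^{q+1}}^{q+1}\bigr)>0
\]
whenever $\lambda$ is small enough. For the negative value there are two cases. If $q>1+4/N$, then $\beta>\max\{2,\alpha\}$, so the term $-\lambda^{\beta}\|v\|_{L^{q+1}}^{q+1}$ dominates as $\lambda\to\infty$ and $g(\lambda)\to-\infty$. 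If $q=1+4/N$, then $\beta=2$ and $\alpha<2$, so
\[
g(\lambda)=\omega\|v\|_{L^2}^2+\lambda^{\alpha}\|v\|_{L^{p+1}}^{p+1}+\lambda^2\bigl(\|\nabla v\|_{L^2}^2-\|v\|_{L^{q+1}}^{q+1}\bigr),
\]
and here the hypothesis $P(v)\le0$ enters: from \eqref{eq:4.2} with $\beta=2$ we get $\|\nabla v\|_{L^2}^2+\tfrac{\alpha}{p+1}\|v\|_{L^{p+1}}^{p+1}\le\tfrac{2}{q+1}\|v\|_{L^{q+1}}^{q+1}$, and since $\tfrac{2}{q+1}<1$ (as $q>1$) and $\|v\|_{L^{q+1}}>0$, this forces $\|\nabla v\|_{L^2}^2<\|v\|_{L^{q+1}}^{q+1}$; thus the coefficient of $\lambda^2$ is negative and, as $2>\alpha$, again $g(\lambda)\to-\infty$.

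Combining the two facts, $g$ changes sign on $(0,\infty)$, so by continuity there is $\lambda_0>0$ with $K_\omega(v^{\lambda_0})=g(\lambda_0)=0$. The only delicate point is the critical case $q=1+4/N$: there the kinetic term and the focusing nonlinearity carry the same scaling weight $\lambda^2$, so the sign of $g$ at infinity is not automatic, and this is precisely what the extra assumption $P(v)\le0$ supplies.
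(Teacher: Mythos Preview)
Your proof is correct and follows essentially the same approach as the paper: both compute $K_\omega(v^\lambda)$ explicitly, use the shape of the resulting function in $\lambda$ when $\beta>2$, and in the critical case $\beta=2$ use $P(v)\le0$ together with $1-\tfrac{\beta}{q+1}>0$ to force $\|\nabla v\|_{L^2}^2-\|v\|_{L^{q+1}}^{q+1}<0$. Your version is a bit more explicit about the intermediate value theorem step and the trivial case $v=0$, but the argument is the same.
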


\begin{proof}
We note that $K_\omega(v^{\lambda})$ is expressed as
\[
K_\omega(v^\lambda)
=\lambda^2\|\nabla v\|_{L^2}^2
+\omega\|v\|_{L^2}^2
+\lambda^\alpha\|v\|_{L^{p+1}}^{p+1}
-\lambda^\beta\|v\|_{L^{q+1}}^{q+1}.
\]
If $q>1+4/N$, that is, if $\beta>2$, the conclusion follows from the shape of the graph of $\lambda\mapsto K_\omega(v^{\lambda})$.

If $q=1+4/N$, we have the expression
\[
K_\omega(v^\lambda)
=\omega\|v\|_{L^2}^2
+\lambda^\alpha\|v\|_{L^{p+1}}^{p+1}
+\lambda^2\left(\|\nabla v\|_{L^2}^2
-\|v\|_{L^{q+1}}^{q+1}\right).
\]
Since $\alpha<2$, we only have to show that 
$\|\nabla v\|_{L^2}^2-\|v\|_{L^{q+1}}^{q+1}<0$.
We note that 
\[
1-\frac{\beta}{q+1}
=1-\frac{2}{q+1}>0.
\]
From this and $P(v)\le0$, we have
\[
\|\nabla v\|_{L^2}^2-\|v\|_{L^{q+1}}^{q+1}
\le-\frac{\alpha}{p+1}\|v\|_{L^{p+1}}^{p+1}
-\left(1-\frac{\beta}{q+1}\right)\|v\|_{L^{q+1}}^{q+1}
<0.
\]
This completes the proof.
\end{proof}

\begin{lemma}\label{lem:4.3}
If $v\in H^1(\R^N)$ satisfies $v\ne0$ and $P(v)\le 0$,
then 
\[
\frac12P(v)
\le S_\omega(v)-\mu(\omega).
\]
\end{lemma}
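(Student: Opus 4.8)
The plan is to exploit the scaling curve $\lambda\mapsto S_\omega(v^\lambda)$ and the fact that, by Lemma~\ref{lem:4.2}, this curve crosses the Nehari manifold at some $\lambda_0>0$ with $K_\omega(v^{\lambda_0})=0$, so that $S_\omega(v^{\lambda_0})\ge\mu(\omega)$. First I would record the explicit form of the scaling function
\[
g(\lambda):=S_\omega(v^\lambda)
=\frac{\lambda^2}{2}\|\nabla v\|_{L^2}^2+\frac{\omega}{2}\|v\|_{L^2}^2+\frac{\lambda^\alpha}{p+1}\|v\|_{L^{p+1}}^{p+1}-\frac{\lambda^\beta}{q+1}\|v\|_{L^{q+1}}^{q+1},
\]
and note from \eqref{eq:4.4} that $g'(1)=P(v)$. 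The hypothesis $P(v)\le0$ says $g'(1)\le0$, so $g$ is non-increasing at $\lambda=1$; since $g(\lambda)\to-\infty$ as $\lambda\to\infty$ (here $\beta>2$ when $q>1+4/N$, and the borderline $q=1+4/N$ case is handled via the same sign information as in Lemma~\ref{lem:4.2}, using $P(v)\le0$), the value $\lambda_0$ produced by Lemma~\ref{lem:4.2} satisfies $\lambda_0\ge1$.

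Next I would compare $g(1)=S_\omega(v)$ with $g(\lambda_0)=S_\omega(v^{\lambda_0})\ge\mu(\omega)$ by integrating $g'$. The key elementary fact is a convexity/monotonicity estimate for $\lambda\mapsto g'(\lambda)$ weighted appropriately: one checks that $\lambda\mapsto \tfrac{d}{d\lambda}\bigl(\lambda^{-2}\bigl(g(\lambda)-\tfrac{\omega}{2}\|v\|_{L^2}^2\bigr)\bigr)$, or more simply the function $h(\lambda):=g'(\lambda)/\lambda$, behaves in a way that lets one bound $g(\lambda_0)-g(1)=\int_1^{\lambda_0}g'(\lambda)\,d\lambda$ from above by $\tfrac12(\lambda_0^2-1)\,P(v)$ when $P(v)\le 0$ — the point being that $g'(\lambda)\le \lambda\,P(v)$ for $\lambda\ge1$ once $P(v)\le 0$, because the $\lambda^\beta$ term with $\beta\ge 2$ only makes $g'$ more negative and the $\lambda^\alpha$ term with $\alpha<2$ is controlled. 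Actually the cleanest route is: since $\alpha<2\le\beta$, the function $\lambda\mapsto \lambda^{-1}g'(\lambda)$ is non-increasing on $[1,\infty)$; hence for $\lambda\in[1,\lambda_0]$ we have $g'(\lambda)\le \lambda g'(1)=\lambda P(v)$, and integrating gives
\[
\mu(\omega)-S_\omega(v)\le g(\lambda_0)-g(1)=\int_1^{\lambda_0}g'(\lambda)\,d\lambda\le P(v)\int_1^{\lambda_0}\lambda\,d\lambda=\frac{\lambda_0^2-1}{2}P(v)\le \frac12 P(v),
\]
where the last inequality uses $P(v)\le0$ and $\lambda_0\ge1$, i.e. $\lambda_0^2-1\ge 0$ so $\tfrac{\lambda_0^2-1}{2}P(v)\le\tfrac12 P(v)$ precisely when $\lambda_0^2-1\ge1$; to avoid needing $\lambda_0\ge\sqrt2$ I would instead argue directly that $\tfrac12 P(v)\ge \tfrac{\lambda_0^2-1}{2}P(v)$ fails in general, so the correct final comparison is to bound $\int_1^{\lambda_0}g'$ more carefully. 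The honest statement is $\mu(\omega)-S_\omega(v)\le\tfrac12 P(v)$, equivalently $\tfrac12P(v)\le S_\omega(v)-\mu(\omega)$, and this comes out once one observes that the quantity $S_\omega(v^\lambda)-\tfrac\lambda2 P(v^\lambda)\big|_{\lambda=1}$-type manipulation: set $F(\lambda):=g(\lambda)-\tfrac12\lambda g'(\lambda)$; then $F'(\lambda)=\tfrac12 g'(\lambda)-\tfrac12\lambda g''(\lambda)$ and one computes $g(\lambda)-\tfrac12\lambda g'(\lambda)=\tfrac\omega2\|v\|_{L^2}^2+\bigl(\tfrac1{p+1}-\tfrac\alpha{2(p+1)}\bigr)\lambda^\alpha\|v\|_{L^{p+1}}^{p+1}-\bigl(\tfrac1{q+1}-\tfrac\beta{2(q+1)}\bigr)\lambda^\beta\|v\|_{L^{q+1}}^{q+1}$, which is non-increasing in $\lambda$ since $\alpha<2$ (so its coefficient $\tfrac{2-\alpha}{2(p+1)}>0$ multiplies an increasing power) — wait, that makes it increasing in the middle term.

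Let me restate the plan without grinding: the main obstacle is pinning down the right monotone quantity. I expect the correct route is to evaluate at $\lambda_0$: $S_\omega(v^{\lambda_0})\ge\mu(\omega)$ and $K_\omega(v^{\lambda_0})=0$, then use the identity \eqref{eq:2.3}, namely $S_\omega(w)=\tfrac12K_\omega(w)-\tfrac{p-1}{2(p+1)}\|w\|_{L^{p+1}}^{p+1}+\tfrac{q-1}{2(q+1)}\|w\|_{L^{q+1}}^{q+1}$, together with a parallel identity expressing $S_\omega(v)-\tfrac12 P(v)$, and compare the two by the scaling monotonicity of the individual $L^{p+1}$ and $L^{q+1}$ terms under $v\mapsto v^\lambda$ with $\lambda_0\ge1$. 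Concretely, $\|v^{\lambda_0}\|_{L^{p+1}}^{p+1}=\lambda_0^{\alpha}\|v\|_{L^{p+1}}^{p+1}\ge\|v\|_{L^{p+1}}^{p+1}$ (as $\alpha>0,\lambda_0\ge1$) and similarly for the $q+1$ norm, and since $S_\omega(v)-\tfrac12P(v)=\tfrac\omega2\|v\|_{L^2}^2+\tfrac{2-\alpha}{2(p+1)}\|v\|_{L^{p+1}}^{p+1}+\tfrac{\beta-2}{2(q+1)}\|v\|_{L^{q+1}}^{q+1}$ is exactly (the $\lambda=1$ value of) the same non-negative-coefficient combination whose $\lambda_0$-value equals $S_\omega(v^{\lambda_0})-\tfrac12P(v^{\lambda_0})=S_\omega(v^{\lambda_0})$ using $P(v^{\lambda_0})=g'(\lambda_0)$ and the fact that at the Nehari point $P$ relates to $K$... the cleanest finish: $S_\omega(v^{\lambda_0})-\tfrac12 P(v^{\lambda_0})\ge S_\omega(v)-\tfrac12 P(v)$ by termwise monotonicity in $\lambda_0\ge1$ (all coefficients $\tfrac\omega2,\tfrac{2-\alpha}{2(p+1)},\tfrac{\beta-2}{2(q+1)}\ge0$), while $P(v^{\lambda_0})=g'(\lambda_0)\le 0$ because $g$ is non-increasing past $\lambda=1$ (it attains interior max at or before $1$ and decreases to $-\infty$), hence $S_\omega(v^{\lambda_0})\ge S_\omega(v^{\lambda_0})-\tfrac12P(v^{\lambda_0})\ge S_\omega(v)-\tfrac12P(v)$; combined with $S_\omega(v^{\lambda_0})\ge\mu(\omega)$ this gives $\mu(\omega)\le S_\omega(v)-\tfrac12P(v)$ — no, I need the inequality the other way. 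Since both give lower bounds on $S_\omega(v^{\lambda_0})$ I cannot chain them; instead I use $S_\omega(v^{\lambda_0})\ge\mu(\omega)$ and $S_\omega(v^{\lambda_0})-\tfrac12P(v^{\lambda_0})\le S_\omega(v)-\tfrac12P(v)$? This would need the termwise combination to be \emph{non-increasing}, which holds iff $2-\alpha\le 0$, false. So the real content — and the main obstacle — is that one must instead compare at $\lambda_0$ using that $S_\omega(v^{\lambda_0})$ itself, minus a correction, is dominated by $S_\omega(v)-\tfrac12P(v)$; I would carry this out by writing $S_\omega(v)-\tfrac12 P(v)=g(1)-\tfrac12 g'(1)$ and proving $\varphi(\lambda):=g(\lambda)-\tfrac12 g'(\lambda)\cdot\mathbf{1}$ — rather, proving that $\lambda\mapsto g(\lambda)-\tfrac{\lambda}{2}g'(\lambda)$ is non-increasing for $\lambda\le\lambda_0$, so its value at $1$ dominates its value at $\lambda_0$, which equals $g(\lambda_0)-\tfrac{\lambda_0}{2}g'(\lambda_0)\ge g(\lambda_0)\ge\mu(\omega)$ (using $g'(\lambda_0)\le0$, $\lambda_0\ge1$); and $g(\lambda)-\tfrac\lambda2 g'(\lambda)=\tfrac\omega2\|v\|_{L^2}^2+\tfrac{2-\alpha}{2(p+1)}\lambda^\alpha\|v\|_{L^{p+1}}^{p+1}+\tfrac{\beta-2}{2(q+1)}\lambda^\beta\|v\|_{L^{q+1}}^{q+1}$ is unfortunately \emph{non-decreasing} in $\lambda$, so this still fails — meaning the true argument must instead use $\lambda_0\le 1$. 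Re-examining: $g'(1)=P(v)\le 0$ and $g$ has a unique critical point $\lambda_*\le 1$ (maximum), so on the Nehari manifold $K_\omega(v^{\lambda_0})=0$ forces $\lambda_0\le\lambda_*\le1$; with $\lambda_0\le1$ the combination $g(\lambda)-\tfrac\lambda2 g'(\lambda)$ being non-decreasing gives exactly $\mu(\omega)\le g(\lambda_0)\le g(\lambda_0)-\tfrac{\lambda_0}{2}g'(\lambda_0)\le g(1)-\tfrac12 g'(1)=S_\omega(v)-\tfrac12P(v)$, which is the claim. So the plan is: (i) show $\lambda_0\le1$ from $P(v)\le0$ and the shape of $g$; (ii) compute $g(\lambda)-\tfrac\lambda2 g'(\lambda)$ explicitly and observe all $\lambda$-dependent coefficients are $\ge0$, hence it is non-decreasing; (iii) chain $\mu(\omega)\le S_\omega(v^{\lambda_0})=g(\lambda_0)\le g(\lambda_0)-\tfrac{\lambda_0}{2}g'(\lambda_0)\le g(1)-\tfrac12 g'(1)=S_\omega(v)-\tfrac12P(v)$, using $g'(\lambda_0)\le0$ in the middle. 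The main obstacle is getting the direction of the monotonicity and the location of $\lambda_0$ consistent; once $\lambda_0\le 1$ is established the rest is a one-line chain.
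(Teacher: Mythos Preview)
Your final plan contains a genuine gap. Step~(i), the assertion that $K_\omega(v^{\lambda_0})=0$ forces $\lambda_0\le\lambda_*$, is a non sequitur: $\lambda_0$ is a zero of $\lambda\mapsto K_\omega(v^\lambda)$ while $\lambda_*$ is the critical point of $g(\lambda)=S_\omega(v^\lambda)$, and these are controlled by the distinct functionals $K_\omega$ and $P$. For $\omega>0$ it is easy to produce $v$ with $P(v)<0$ and $K_\omega(v)>0$ (the term $\omega\|v\|_{L^2}^2$ appears in $K_\omega$ but not in $P$); then $K_\omega(v^\lambda)>0$ at $\lambda=1$, and a zero supplied by Lemma~\ref{lem:4.2} lies at some $\lambda_0>1$. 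Worse, your chain~(iii) is internally inconsistent: the step $g(\lambda_0)\le g(\lambda_0)-\tfrac{\lambda_0}{2}g'(\lambda_0)$ requires $g'(\lambda_0)\le 0$, i.e.\ $\lambda_0\ge\lambda_*$, which directly contradicts the $\lambda_0\le\lambda_*$ you assert in~(i).

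That said, your monotone quantity $h(\lambda):=g(\lambda)-\tfrac{\lambda}{2}g'(\lambda)$ \emph{does} yield a correct proof once you route the chain through $\lambda_*$ rather than $\lambda_0$: one checks (using $P(v)\le0$ and that $g'(\lambda)/\lambda$ is strictly decreasing) that $g$ has a unique maximum at some $\lambda_*\le 1$, and then
\[
\mu(\omega)\le g(\lambda_0)\le g(\lambda_*)=h(\lambda_*)\le h(1)=S_\omega(v)-\tfrac12 P(v),
\]
with no hypothesis on the location of $\lambda_0$ needed. The paper's own route is cleaner still: subtract the \emph{constant} $\tfrac{\lambda^2}{2}P(v)$ rather than $\tfrac12 P(v^\lambda)=\tfrac{\lambda}{2}g'(\lambda)$, i.e.\ set $f(\lambda):=g(\lambda)-\tfrac{\lambda^2}{2}P(v)$. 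Then $f'(1)=0$, and a direct shape argument gives $f(1)=\max_{\lambda>0}f(\lambda)$, whence $\mu(\omega)\le g(\lambda_0)\le f(\lambda_0)\le f(1)=S_\omega(v)-\tfrac12P(v)$ for any $\lambda_0$, using only $P(v)\le0$.
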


\begin{proof}
We define the function 
\begin{align*}
f(\lambda)
&:=S_\omega(v^\lambda)
-\frac{\lambda^2}{2}P(v) \\
&\hphantom{:}=\frac{\omega}{2}\|v\|_{L^2}^2
+\frac{\lambda^2}{2}\left(\|\nabla v\|_{L^2}^2
-P(v)\right)
+\frac{a\lambda^\alpha}{p+1}\|v\|_{L^{p+1}}^{p+1}
-\frac{\lambda^\beta}{q+1}\|v\|_{L^{q+1}}^{q+1}.
\end{align*}
First, we prove that $f(1)=\max_{\lambda>0}f(\lambda)$.
To this aim, we divide two cases.

\noindent{\textbf{Case~1:~$q>1+4/N$.}}
Since $v\ne0$ and $P(v)\le 0$, we have $\|\nabla v\|_{L^2}^2-P(v)>0$. Moreover, by \eqref{eq:4.4} we have $f'(1)=0$. Therefore, by the shape of the graph of the function $f(\lambda)$, we see that $f(1)=\max_{\lambda>0}f(\lambda)$.

\noindent{\textbf{Case~2:~$q=1+4/N$.}}
In this case, we express $f(\lambda)$ as
\begin{align*}
f(\lambda)
&=\frac{\omega}{2}\|v\|_{L^2}^2
+\frac{\lambda^\alpha}{p+1}\|v\|_{L^{p+1}}^{p+1}
+\lambda^2\left(\frac12\|\nabla v\|_{L^2}^2
-\frac12P(v)-\frac{1}{q+1}\|v\|_{L^{q+1}}^{q+1}\right) \\
&=\frac{\omega}{2}\|v\|_{L^2}^2
+\frac{\lambda^\alpha}{p+1}\|v\|_{L^{p+1}}^{p+1}
-\frac{\alpha\lambda^2}{2(p+1)}\|v\|_{L^{p+1}}^{p+1}.
\end{align*}
By $f'(1)=0$ and the shape of the graph of $f(\lambda)$,
we see that $f(1)=\max_{\lambda>0}f(\lambda)$.

Next, we prove the desired inequality.
By Lemma~\ref{lem:4.2}, there exists $\lambda_0>0$ such that $K(v^{\lambda_0})=0$.
Therefore, by the definition of $\mu(\omega)$,
$P(v)\le0$, and $f(1)\ge f(\lambda_0)$, we obtain
\[
\mu(\omega)
\le S_\omega(v^{\lambda_0})
\le S_\omega(v^{\lambda_0})-\frac{\lambda_0^2}{2}P(v)
\le S_\omega(v)-\frac{1}{2}P(v).
\]
This completes the proof.
\end{proof}

\begin{lemma}\label{lem:4.4}
The set $\scB_{\omega}$ is invariant under the flow of \eqref{NLS},
that is, if $u_0\in\scB_{\omega}$, then the solution $u(t)$ of \eqref{NLS} satisfies $u(t)\in\scB_{\omega}$ for all $t\in I_{\max}$.
\end{lemma}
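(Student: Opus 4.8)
The plan is to combine the two conservation laws of \eqref{NLS} with Lemma~\ref{lem:4.3}. First, observe that $S_\omega$ is itself a conserved quantity: writing $S_\omega(v)=E(v)+\frac{\omega}{2}\|v\|_{L^2}^2$, both the energy $E$ and the charge $\|\cdot\|_{L^2}^2$ are preserved by the flow (Section~\ref{sec:1.1}), hence so is $S_\omega$. Therefore, if $u_0\in\scB_\omega$, then $S_\omega(u(t))=S_\omega(u_0)<\mu(\omega)$ for every $t\in I_{\max}$, so the first defining inequality of $\scB_\omega$ is automatically propagated. What remains is to show $P(u(t))<0$ for all $t\in I_{\max}$.

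Next I would record two preliminary observations. Since $P(0)=0$, the hypothesis $P(u_0)<0$ forces $u_0\ne0$, and then uniqueness for \eqref{NLS} gives $u(t)\ne0$ for all $t\in I_{\max}$. Moreover, the map $t\mapsto P(u(t))$ is continuous on $I_{\max}$: indeed $u\in C(I_{\max},H^1(\R^N))$, and each term of $P$ in \eqref{eq:4.2} is a continuous functional on $H^1(\R^N)$ — the $L^{p+1}$ and $L^{q+1}$ terms by the Sobolev embedding $H^1(\R^N)\hookrightarrow L^{p+1}(\R^N)\cap L^{q+1}(\R^N)$, which is valid because $1<p<q<2^*-1$.

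Now I would argue by contradiction. Suppose $P(u(t_1))\ge0$ for some $t_1\in I_{\max}$. Since $P(u(0))=P(u_0)<0$ and $t\mapsto P(u(t))$ is continuous, the intermediate value theorem produces a time $t_0$ (lying between $0$ and $t_1$) with $P(u(t_0))=0$. Applying Lemma~\ref{lem:4.3} to $v=u(t_0)$ — legitimate because $u(t_0)\ne0$ and $P(u(t_0))=0\le0$ — together with the conservation of $S_\omega$ yields
\[
0=\frac12P(u(t_0))\le S_\omega(u(t_0))-\mu(\omega)=S_\omega(u_0)-\mu(\omega)<0,
\]
which is absurd. Hence $P(u(t))<0$ for all $t\in I_{\max}$, and combined with the conservation of $S_\omega$ this shows $u(t)\in\scB_\omega$ throughout $I_{\max}$.

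This argument is essentially routine once Lemma~\ref{lem:4.3} is available; the only points that require a little care are the continuity of $t\mapsto P(u(t))$ on $H^1$ (needed to invoke the intermediate value theorem) and the nonvanishing $u(t_0)\ne0$ (needed so that Lemma~\ref{lem:4.3} applies). No new variational inequality is required here — the substantive estimate has already been established in Lemma~\ref{lem:4.3}.
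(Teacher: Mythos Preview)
Your argument is correct and follows essentially the same route as the paper: conservation of $S_\omega$ gives the first inequality, and then a continuity argument combined with Lemma~\ref{lem:4.3} rules out $P(u(t_0))=0$ at any intermediate time. The paper's version is terser, omitting the explicit justification of continuity and of $u(t_0)\ne0$ that you spell out.
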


\begin{proof}
Let $u_0\in\scB_{\omega}$.
Since $S_\omega$ is a conserved quantity of \eqref{NLS},
we have 
\begin{equation}\label{eq:4.6}
S_\omega(u(t))=S_\omega(u_0)<\mu(\omega)\quad\text{for all}~t\in I_{\max}.
\end{equation}

Now we show that $P(u(t))<0$ for all $t\in I_{\max}$.
If not, by the continuity of the flow, there exists $t_0\in I_{\max}$ such that $P(u(t_0))=0$.
By Lemma~\ref{lem:4.3} we have $\mu(\omega)\le S_\omega(u(t_0))$,
which contradicts \eqref{eq:4.6}.
This completes the proof.
\end{proof}

\begin{proof}[Proof of Proposition~\ref{prop:4.1}]
Let $u_0\in\scB_{\omega}\cap\Sigma$.
By Lemma~\ref{lem:4.4}, we have $u(t)\in\scB_{\omega}\cap\Sigma$ for all $t\in I_{\max}$.
Then, it follows from the virial identity~\eqref{eq:4.1} and Lemma~\ref{lem:4.3} that
\[
\frac{d^2}{dt^2}\|xu(t)\|_{L^2}^2
=8P(u(t))
\le16\bigl(S_\omega(u(t))-\mu(\omega)\bigr)
=16\bigl(S_\omega(u_0)-\mu(\omega)\bigr)
<0
\]
for all $t\in I_{\max}$,
which implies $|I_{\max}|<\infty$. This completes the proof.
\end{proof}

\subsection{Strong instability}

Now we prove Theorem~\ref{thm:1.8}.
Here we prepare the following notations:
\[
Y=Y_{p,\omega}
:=\left\{
\begin{aligned}
&H^1(\R^N)& &\text{if $\omega>0$, or if $\omega=0$ and \eqref{eq:1.8} holds}, \\
&\dot H^1(\R^N)\cap L^{p+1}(\R^N) & & \text{otherwise}.
\end{aligned}
\right.
\]
We define a cutoff function by 
\begin{equation} \label{eq:4.7}
\chi_R(x):=\chi(|x|/R)\quad \text{for}~x\in\R^N,
\end{equation}
where $\chi\in C_\mathrm{c}^\infty[0,\infty)$ is a function such that $\chi(r)=1$ if $0\le r\le1$ and $\chi(r)=0$ if $r\geq2$.

\begin{lemma}\label{lem:4.5}
There exists a function $R\colon(1,\infty)\to(0,\infty)$ such that
$\chi_{R(\lambda)}\lambda\phi_\omega\in\scB_{\omega}\cap\Sigma$ for all $\lambda>1$, and that
$\chi_{R(\lambda)}\lambda\phi_\omega\to\phi_\omega$ in $Y$ as $\lambda\downarrow1$. 
\end{lemma}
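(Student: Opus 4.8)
\textbf{Proof proposal for Lemma~\ref{lem:4.5}.}

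The plan is to verify the two required properties of the cutoff-and-dilate family $\chi_R\lambda\phi_\omega$ by exploiting that $\phi_\omega$ itself is a minimizer: we have $S_\omega(\phi_\omega)=\mu(\omega)$, $K_\omega(\phi_\omega)=0$, and $P(\phi_\omega)=\partial_\lambda S_\omega(\phi_\omega^\lambda)|_{\lambda=1}=0$ (the last because $\lambda\mapsto S_\omega(\phi_\omega^\lambda)$ is maximized at $\lambda=1$ on the Nehari manifold, a standard consequence of the variational characterization in Section~\ref{sec:2}, together with $q\ge 1+4/N$ so the dilation is the natural direction). First I would record that $\phi_\omega\in\Sigma$ when $\phi_\omega\in H^1$: this follows from the decay estimate $\phi_\omega(r)\lesssim r^{-\rho}$ of Proposition~\ref{prop:3.5} with $\rho=\max\{2/(p-1),N-2\}$, and when \eqref{eq:1.8} holds one checks $r^{-\rho}\in L^2(\R^N;|x|^2\,dx)$ exactly as in Corollary~\ref{cor:1.4}. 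In the ``otherwise'' case $\phi_0\notin L^2$, so $\chi_R\lambda\phi_0$ is a genuine $H^1$ (indeed $\Sigma$) truncation of a function that is only in $\dot H^1\cap L^{p+1}$; here the cutoff does double duty, making the object lie in $\Sigma$ at all, and the target topology $Y=\dot H^1\cap L^{p+1}$ is the weak one appropriate for that case.

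The core is a continuity/perturbation argument. Fix $\lambda>1$. As $R\to\infty$, $\chi_R\lambda\phi_\omega\to\lambda\phi_\omega$ in $Y$ (dominated convergence for the $L^{p+1}$, $L^{q+1}$, and $L^2$ pieces using the uniform decay, and for the gradient piece using that $\nabla(\chi_R)=O(1/R)$ is supported on $R\le|x|\le 2R$ where $\phi_\omega$ is small); consequently $S_\omega(\chi_R\lambda\phi_\omega)\to S_\omega(\lambda\phi_\omega)$ and $P(\chi_R\lambda\phi_\omega)\to P(\lambda\phi_\omega)$. Now I claim $S_\omega(\lambda\phi_\omega)<\mu(\omega)$ and $P(\lambda\phi_\omega)<0$ for every $\lambda>1$. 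For $P$: since $P(\phi_\omega)=0$ and, using $q\ge 1+4/N$ (so $\beta\ge 2>\alpha$), the function $\lambda\mapsto P(\lambda\phi_\omega)=\lambda^2\|\nabla\phi_\omega\|_{L^2}^2+\tfrac{\alpha}{p+1}\lambda^{p+1}\|\phi_\omega\|_{L^{p+1}}^{p+1}-\tfrac{\beta}{q+1}\lambda^{q+1}\|\phi_\omega\|_{L^{q+1}}^{q+1}$ — wait, more precisely $P(\lambda\phi_\omega)$ is a polynomial in $\lambda$ vanishing at $\lambda=1$ whose derivative there is negative (this is where $q\ge 1+4/N$ enters: it forces the $L^{q+1}$-term to dominate), so $P(\lambda\phi_\omega)<0$ for $\lambda$ slightly above $1$, and the sign persists for all $\lambda>1$ by the shape of the graph. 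For $S_\omega$: $\lambda=1$ is the strict maximum of $\lambda\mapsto S_\omega(\lambda\phi_\omega)$? No — rather $\mu(\omega)=S_\omega(\phi_\omega)$ and $K_\omega(\phi_\omega)=0$ means $\phi_\omega$ is the Nehari minimizer, so for $\lambda\ne 1$ one has $\lambda\phi_\omega\notin\scK_\omega$; combining $\tfrac{d}{d\lambda}S_\omega(\lambda\phi_\omega)=\tfrac1\lambda K_\omega(\lambda\phi_\omega)$ with the sign of $K_\omega(\lambda\phi_\omega)$ (negative for $\lambda>1$, again by the graph shape) gives $S_\omega(\lambda\phi_\omega)<S_\omega(\phi_\omega)=\mu(\omega)$ for all $\lambda>1$. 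Hence for each $\lambda>1$ there is $R(\lambda)$ large enough that both strict inequalities $S_\omega(\chi_{R(\lambda)}\lambda\phi_\omega)<\mu(\omega)$ and $P(\chi_{R(\lambda)}\lambda\phi_\omega)<0$ hold, i.e.\ $\chi_{R(\lambda)}\lambda\phi_\omega\in\scB_\omega\cap\Sigma$.

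Finally, for the convergence $\chi_{R(\lambda)}\lambda\phi_\omega\to\phi_\omega$ in $Y$ as $\lambda\downarrow 1$: write $\chi_{R(\lambda)}\lambda\phi_\omega-\phi_\omega=(\lambda-1)\chi_{R(\lambda)}\phi_\omega+(\chi_{R(\lambda)}-1)\phi_\omega$. The first term has $Y$-norm $\le(\lambda-1)\|\phi_\omega\|_Y\to 0$. The second term is $(\chi_{R}-1)\phi_\omega$ with $R=R(\lambda)$; its $Y$-norm tends to $0$ as $R\to\infty$ by the same dominated-convergence/small-gradient estimate as above, provided we arrange $R(\lambda)\to\infty$ as $\lambda\downarrow 1$. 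This last point is the one place requiring care, and I expect it to be the main (mild) obstacle: $R(\lambda)$ must be chosen large enough to secure membership in $\scB_\omega$, and the required size may blow up as $\lambda\downarrow1$ since the strict inequalities degenerate there. The fix is standard: since for fixed $\lambda$ the map $R\mapsto\big(S_\omega(\chi_R\lambda\phi_\omega),P(\chi_R\lambda\phi_\omega)\big)$ converges to $\big(S_\omega(\lambda\phi_\omega),P(\lambda\phi_\omega)\big)$ with both limit coordinates strictly on the good side, we may \emph{define} $R(\lambda):=\max\{\,\lambda/(\lambda-1),\ \inf\{R\ge 1: S_\omega(\chi_{R'}\lambda\phi_\omega)<\mu(\omega)\ \text{and}\ P(\chi_{R'}\lambda\phi_\omega)<0\ \text{for all}\ R'\ge R\}\,\}$, which is finite for each $\lambda>1$ and forces $R(\lambda)\to\infty$ as $\lambda\downarrow1$; then both terms above go to $0$ and the proof is complete.
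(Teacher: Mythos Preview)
Your approach is the paper's: show $\lambda\phi_\omega\in\scB_\omega$ for $\lambda>1$ via the shape of the graphs of $\lambda\mapsto S_\omega(\lambda\phi_\omega)$ and $\lambda\mapsto P(\lambda\phi_\omega)$, then for each $\lambda>1$ pick $R(\lambda)$ large enough that $\chi_{R(\lambda)}\lambda\phi_\omega$ remains in the open set $\scB_\omega$ while also controlling the $Y$-distance to $\lambda\phi_\omega$ (the paper simply imposes $\|\chi_{R(\lambda)}\lambda\phi_\omega-\lambda\phi_\omega\|_Y<\lambda-1$, which is equivalent to your device of forcing $R(\lambda)\to\infty$).

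Two harmless but incorrect asides worth pruning. First, your opening claim that $\phi_\omega\in\Sigma$ whenever $\phi_\omega\in H^1$ is false for $\omega=0$: condition~\eqref{eq:1.8} guarantees only $\rho>N/2$, not the $\rho>(N+2)/2$ needed for $|x|\phi_0\in L^2$ (take $N=1$ and $p\in(7/3,5)$ for a counterexample). This does not matter, since the compact support of $\chi_R$ places $\chi_R\lambda\phi_\omega$ in $\Sigma$ regardless; just drop the claim. Second, the remark ``this is where $q\ge1+4/N$ enters'' in the argument for $P(\lambda\phi_\omega)<0$ is misplaced: the unique-positive-root shape argument for $\lambda\mapsto P(\lambda\phi_\omega)$ uses only $q>p>1$, and in fact $q\ge1+4/N$ plays no role in this lemma.
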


\begin{proof}
First, we show that $\lambda\phi_\omega\in\scB_{\omega}$ for all $\lambda>1$.
We note that 
\[
S_\omega(\lambda\phi_\omega)
=\frac{\lambda^2}{2}(\|\nabla\phi_\omega\|_{L^2}^2
+\omega\|\phi_\omega\|_{L^2}^2)
+\frac{\lambda^{p+1}}{p+1}\|\phi_\omega\|_{L^{p+1}}^{p+1}
-\frac{\lambda^{q+1}}{q+1}\|\phi_\omega\|_{L^{q+1}}^{q+1}.
\]
By $\bigl.\del_\lambda S_\omega(\lambda\phi_\omega)\bigr|_{\lambda=1}=0$ and the shape of the graph of $\lambda\mapsto S_\omega(\lambda\phi_\omega)$, we see that $S_\omega(\lambda\phi_\omega)<S_\omega(\phi_\omega)$ for all $\lambda>1$.

Next, we show the conclusion. We note that the set
\[
\{v\in Y:S_\omega(v)<\mu(\omega),~P(v)<0\}
\] 
is open in $Y$.
Since $\chi_Rv\to v$ in $Y$ as $R\to\infty$ for any $v\in Y$,
we see that for any $\lambda>1$ there exists $R(\lambda)>0$ such that 
$\|\chi_{R(\lambda)}\lambda\phi_\omega-\lambda\phi_\omega\|_{Y}<\lambda-1$ and
$\chi_{R(\lambda)}\lambda\phi_\omega\in\scB_{\omega}\cap\Sigma$.
This implies the conclusion.
\end{proof}

\begin{proof}[Proof of Theorem~\ref{thm:1.8}.]
The conclusion follows from Proposition~\ref{prop:4.1} and Lemma~\ref{lem:4.5}.
This completes the proof.
\end{proof}

\section{Instability for the case $q<1+4/N$}
\label{sec:5}
In this section we prove Theorem~\ref{thm:1.12}.
Throughout this section, we assume $1<p<q<1+4/N$.

\subsection{Sufficient conditions for instability}
\label{sec:5.1}

In this subsection we prove the following by using the similar argument of \cite{O95ds}.

\begin{proposition}\label{prop:5.1}
Let $\omega\ge0$ and assume that
\begin{align}
\label{eq:5.1}
\del_\lambda^2 S_{\omega}(\phi^{\lambda}_{\omega})|_{\lambda =1}<0,
\quad\text{where}~v^{\lambda}(x):=\lambda^{N/2}v(\lambda x).
\end{align}
Then the standing wave $e^{i\omega t}\phi_\omega$ is unstable.
\end{proposition}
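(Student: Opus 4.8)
The plan is to follow the variational instability scheme of Berestycki--Cazenave as refined in \cite{O95ds, L08, OY15}, adapted to the subcritical setting $q<1+4/N$ where blowup is unavailable. Set $\phi=\phi_\omega$ and recall from \eqref{eq:4.4} that $P(v)=\del_\lambda S_\omega(v^\lambda)|_{\lambda=1}$, so that the hypothesis \eqref{eq:5.1} reads $\del_\lambda P(\phi^\lambda)|_{\lambda=1}<0$. Since $\phi\in\scG_\omega$ is a ground state we have $K_\omega(\phi)=0$ and $P(\phi)=0$; the plan is to show that the scaling curve $\lambda\mapsto\phi^\lambda$ leaves the minimizing set and enters a region where $S_\omega$ drops below $\mu(\omega)$ while $P$ stays negative, and that this region is flow-invariant and keeps the solution a fixed distance from the orbit.

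First I would analyze the scalar function $g(\lambda):=S_\omega(\phi^\lambda)$. One has $g(1)=\mu(\omega)$, $g'(1)=P(\phi)=0$, and $g''(1)<0$ by \eqref{eq:5.1}; hence for $\lambda$ close to $1$ (say $\lambda\in(1,1+\eta)$) we get $g(\lambda)<\mu(\omega)$. Next I would show that for such $\lambda$, $P(\phi^\lambda)<0$: since $P(\phi^\lambda)=\lambda g'(\lambda)$ and $g'(1)=0$, $g''(1)<0$, we get $g'(\lambda)<0$ just above $\lambda=1$, so $P(\phi^\lambda)<0$ there. Thus, fixing any $\lambda_0\in(1,1+\eta)$, the data $\phi^{\lambda_0}$ satisfies $S_\omega(\phi^{\lambda_0})<\mu(\omega)$ and $P(\phi^{\lambda_0})<0$. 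When $\omega=0$ the profile $\phi_0$ need not be in $L^2$, so $\phi^\lambda$ is not directly an $H^1$ datum; here I would introduce the cutoff $\chi_R\phi_0^{\lambda_0}$ as in \eqref{eq:4.7} and Lemma~\ref{lem:4.5}, choosing $R$ large so that the two strict inequalities persist and the truncated function lies in $\Sigma$ (or in the relevant space $Y$), and converges to $\phi_0$ in $Y$ as $\lambda_0\downarrow1$.

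The core step is to set up the invariant region and the dispersive virial obstruction \emph{without} blowup. Following \cite{O95ds}, define $\scN_\omega:=\{v\in\Sigma: S_\omega(v)<\mu(\omega),\ P(v)<0\}$ (together with a mass constraint $\|v\|_{L^2}=\|\phi_\omega\|_{L^2}$ when $\omega>0$, to pin the frequency). I would prove: (i) $\scN_\omega$ is invariant under the \eqref{eq:NLS} flow — this uses conservation of $S_\omega$ (equivalently of $E$ and the charge) plus the fact, parallel to Lemma~\ref{lem:4.3}, that $P(v)\le0$ and $v\ne0$ force $P(v)\le 2(S_\omega(v)-\mu(\omega))<0$ with a quantitative gap, so $P$ cannot reach $0$ along the flow; (ii) on $\scN_\omega$ one has a uniform bound $P(u(t))\le -\delta<0$ for $\delta:=2(\mu(\omega)-S_\omega(u_0))$, whence by the virial identity \eqref{eq:4.1}, $\frac{d^2}{dt^2}\|xu(t)\|_{L^2}^2\le -8\delta$. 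Since $q<1+4/N$ the solution is global with $\sup_t\|u(t)\|_{H^1}<\infty$, so $\|xu(t)\|_{L^2}$ cannot become negative; the standard convexity argument then shows $\|xu(t)\|_{L^2}$ must vanish in finite time only if it were unbounded below — instead one concludes $\|\nabla u(t)\|_{L^2}$ must blow up, contradicting global boundedness, \emph{unless} $\|u(t)\|_{L^2}$ or $\|xu(t)\|_{L^2}$ is large; the correct conclusion in the subcritical case is that $u(t)$ cannot stay near the orbit of $\phi_\omega$ — more precisely, $\inf_{\theta,y}\|u(t)-e^{i\theta}\phi_\omega(\cdot-y)\|_{H^1}\ge\eps_0$ for all $t$, for some $\eps_0>0$ depending on the gap $\mu(\omega)-S_\omega(u_0)$ but not on how small $\|u_0-\phi_\omega\|$ is, because any function near the orbit has $P$ near $P(\phi_\omega)=0$ and $S_\omega$ near $\mu(\omega)$, contradicting membership in $\scN_\omega$ with a fixed gap. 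Taking $u_0=\phi^{\lambda_0}_\omega$ (or its cutoff) with $\lambda_0\downarrow1$ gives data arbitrarily close to $\phi_\omega$ with this property, i.e. instability.

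The main obstacle I anticipate is precisely this last point: in the subcritical regime the virial functional does \emph{not} produce blowup, so one must extract the instability directly from the sign of $P$ along the flow. The key technical lemma (the analogue of \cite[Lemma~?]{O95ds}) is that if $u(t)$ remained within $\eps$ of the orbit $\{e^{i\theta}\phi_\omega(\cdot-y)\}$ for all $t$, then, using the variational characterization (Theorem~\ref{thm:2.1}) and continuity of $S_\omega$ and $P$, we would have $S_\omega(u(t))\to\mu(\omega)$ and $P(u(t))\to0$ as $\eps\to0$, in contradiction with $S_\omega(u(t))=S_\omega(u_0)<\mu(\omega)-\delta/2$ and $P(u(t))\le-\delta$; combined with the virial identity forcing $\|xu(t)\|_{L^2}^2$ to become negative in finite time (which is impossible), we reach the contradiction. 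One must be careful for $\omega=0$: coercivity fails, the orbit distance is measured in $\dot H^1\cap L^{p+1}$, and the cutoff in $\Sigma$ must be reconciled with the $\dot H^1\cap L^{p+1}$ topology using the uniform decay estimate Proposition~\ref{prop:3.5}. These adaptations are routine modifications of the $\omega>0$ argument once the decay bounds are in hand.
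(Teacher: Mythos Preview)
Your proposal has a genuine gap: the ``parallel to Lemma~\ref{lem:4.3}'' estimate you invoke---that $P(v)\le0$ forces $P(v)\le 2\bigl(S_\omega(v)-\mu(\omega)\bigr)$---is \emph{false} when $q<1+4/N$. The proof of Lemma~\ref{lem:4.3} relies on $\beta=\tfrac{N(q-1)}{2}\ge2$ to conclude that $f(\lambda)=S_\omega(v^\lambda)-\tfrac{\lambda^2}{2}P(v)$ is maximized at $\lambda=1$; when $\beta<2$ the $\lambda^2$ term dominates and $f(\lambda)\to+\infty$. Concretely, along the curve $\lambda\mapsto\phi_\omega^\lambda$ the function $g(\lambda)=S_\omega(\phi_\omega^\lambda)$ satisfies $g(1)=\mu(\omega)$, $g'(1)=0$, $g''(1)<0$, but $g(\lambda)\to+\infty$ as $\lambda\to\infty$ (again because $\beta<2$); hence $g$ has a local minimum at some $\lambda^*>1$, and $v:=\phi_\omega^{\lambda^*}$ satisfies $P(v)=\lambda^*g'(\lambda^*)=0$ together with $S_\omega(v)=g(\lambda^*)<\mu(\omega)$, directly contradicting the claimed inequality. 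Consequently the set $\scB_\omega$ in your step (i) is not known to be flow-invariant, and the uniform bound $P(u(t))\le-\delta$ in (ii) does not follow. Your concluding argument then becomes circular: the gap $\delta=2\bigl(\mu(\omega)-S_\omega(u_0)\bigr)$ tends to zero as $\lambda_0\downarrow1$, so no fixed lower bound on the orbit distance emerges, and the virial contradiction you sketch cannot be closed.

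The paper replaces the global estimate of Lemma~\ref{lem:4.3} by a \emph{local} one, valid only in a tube $\scN_{\eps_1}$ of fixed radius around the orbit. For $v\in\scN_{\eps_1}$ one applies the implicit function theorem to $\lambda\mapsto K_\omega(v^\lambda)$ (using $\bigl.\del_\lambda K_\omega(\phi_\omega^\lambda)\bigr|_{\lambda=1}<0$) to find $\Lambda(v)$ close to $1$ with $K_\omega(v^{\Lambda(v)})=0$, and combines this with the second-order Taylor bound $S_\omega(v^\lambda)\le S_\omega(v)+(\lambda-1)P(v)$---which is precisely where hypothesis \eqref{eq:5.1} enters---to get $\mu(\omega)\le S_\omega(v)+(\Lambda(v)-1)P(v)$. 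This yields $P(u(t))\le -m<0$ \emph{only while} $u(t)$ remains in $\scN_{\eps_1}$; the virial identity then forces finite exit time from that fixed tube, which is instability. The key structural point your outline misses is that $\eps_1$ is fixed \emph{before} choosing the initial data, so even though $m\to0$ as $u_0\to\phi_\omega$, the tube radius does not shrink.
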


We define a tube around the standing wave by
\begin{align*}
\scN_\eps
&:=\{v\in H^1(\R^N):\inf_{(\theta,y)\in\R\x\R^N}\|v-e^{i\theta}\phi_\omega(\cdot-y)\|_{H^1}<\eps\}.
\end{align*}

\begin{lemma}\label{lem:5.2}
Assume \eqref{eq:5.1}.
Then there exist $\eps_1,\delta_1\in(0,1)$ such that the following holds:
For any $v\in\scN_{\eps_1}$ there exists $\Lambda(v)\in(1-\delta_1,1+\delta_1)$ such that
\[
\mu(\omega)
\le S_\omega(v)
+\big(\Lambda(v)-1\big)P(v).
\]
\end{lemma}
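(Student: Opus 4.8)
The plan is to run a standard variational/implicit-function argument that produces the ``rescaling parameter'' $\Lambda(v)$ near $1$ for which the rescaled function $v^{\Lambda(v)}$ lies on the Nehari manifold $\scK_\omega$, and then compare $S_\omega(v)$ with $\mu(\omega)$ along the curve $\lambda\mapsto v^\lambda$. First I would record the one-variable facts about the test profile: set $g(\lambda):=S_\omega(\phi_\omega^\lambda)$, so that $g'(1)=P(\phi_\omega)=0$ by \eqref{eq:4.4} and the fact that $\phi_\omega$ solves \eqref{EL} (equivalently $\phi_\omega\in\scA_\omega$), while $g''(1)=\del_\lambda^2 S_\omega(\phi_\omega^\lambda)|_{\lambda=1}<0$ by hypothesis \eqref{eq:5.1}. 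Hence $\lambda=1$ is a strict local maximum of $g$, and in particular there is $\delta_0\in(0,1)$ with $g(\lambda)<g(1)=\mu(\omega)$ for $0<|\lambda-1|\le\delta_0$. Also note $\del_\lambda K_\omega(\phi_\omega^\lambda)|_{\lambda=1}=\langle K_\omega'(\phi_\omega),\phi_\omega\rangle<0$ by the computation \eqref{eq:2.1}, so $\lambda\mapsto K_\omega(\phi_\omega^\lambda)$ is strictly decreasing through $0$ at $\lambda=1$.

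Next I would define, for $v\in H^1$ near the orbit of $\phi_\omega$, the function $h(v,\lambda):=\del_\lambda\big[S_\omega(v^\lambda)\big]$ (equivalently $h(v,\lambda)=\lambda^{-1}\big(\lambda^2\|\nabla v\|_{L^2}^2+\tfrac{\alpha}{p+1}\lambda^\alpha\|v\|_{L^{p+1}}^{p+1}-\tfrac{\beta}{q+1}\lambda^\beta\|v\|_{L^{q+1}}^{q+1}\big)$, which is jointly smooth in $(\lambda,\|\nabla v\|_{L^2},\|v\|_{L^{p+1}},\|v\|_{L^{q+1}})$ and hence continuous on $H^1\x(0,\infty)$; moreover $h(v,\cdot)\in C^1$). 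We have $h(\phi_\omega,1)=P(\phi_\omega)=0$ and $\del_\lambda h(\phi_\omega,1)=g''(1)<0$. By the implicit function theorem (applied with the scalar parameters $\|\nabla v\|_{L^2}^2,\|v\|_{L^{p+1}}^{p+1},\|v\|_{L^{q+1}}^{q+1}$, which depend continuously on $v\in H^1$), there exist $\eps_1,\delta_1\in(0,\delta_0)$ and a continuous map $v\mapsto\Lambda(v)\in(1-\delta_1,1+\delta_1)$, defined for $v\in\scN_{\eps_1}$, with $h(v,\Lambda(v))=0$, $\Lambda(\phi_\omega)=1$, and $\Lambda$ invariant under the symmetries $v\mapsto e^{i\theta}v(\cdot-y)$ (since $h$ only sees the three norms). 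Shrinking $\eps_1$ if necessary, $\Lambda(v)$ is the unique critical point of $\lambda\mapsto S_\omega(v^\lambda)$ in $(1-\delta_1,1+\delta_1)$ and it is a strict local maximum there, because $\del_\lambda h(v,\Lambda(v))<0$ by continuity.

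Now I would extract the Nehari information. From $h(v,\Lambda(v))=0$ together with the elementary identity $\lambda\,h(v,\lambda)=\del_\mu|_{\mu=1}\,S_\omega\big((v^\lambda)^\mu\big)$ and the algebraic relation between $S_\omega$, $K_\omega$ and $J_\omega$ in \eqref{eq:2.2}, one gets $\langle S_\omega'(v^{\Lambda(v)}),v^{\Lambda(v)}\rangle=0$ up to the $\omega$-term; more directly, a short computation shows $h(v,\lambda)=\tfrac1\lambda\big(K_\omega(v^\lambda)-\omega\|v\|_{L^2}^2-(\text{lower order})\big)$ is not quite $K_\omega$, so instead I would argue as in Ohta: the condition $h(v,\Lambda(v))=0$ says $\del_\lambda S_\omega(v^\lambda)|_{\lambda=\Lambda(v)}=0$, and because $S_\omega(v^\lambda)\to+\infty$ or $-\infty$ is controlled by the sign structure (the $L^{q+1}$ term dominates for large $\lambda$ since $\beta>\alpha,2$... here $q<1+4/N$ so $\beta<2$, and the quadratic term dominates for large $\lambda$), $\lambda\mapsto S_\omega(v^\lambda)$ attains its maximum at $\Lambda(v)$ over a neighborhood, hence there is $\lambda_*$ near $\Lambda(v)$ with $K_\omega(v^{\lambda_*})=0$ by Lemma~\ref{lem:4.2} applied to $v^{\Lambda(v)}$ — wait, Lemma~\ref{lem:4.2} requires $q\ge1+4/N$, so in the present regime I would instead directly use that $\lambda\mapsto K_\omega(v^\lambda)$ changes sign (it is positive for small $\lambda$ and, since $q<1+4/N$ gives $\beta<2$, it is $\sim\lambda^2\|\nabla v\|_{L^2}^2>0$ for large $\lambda$ too — so $K_\omega(v^\lambda)$ need not vanish!). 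This is the crux: I must instead observe that $\Lambda(v)$ is chosen so that $v^{\Lambda(v)}$ makes $P$ vanish in the rescaled sense, and then use Lemma~\ref{lem:4.3}-type convexity. Concretely, define $F(\lambda):=S_\omega(v^\lambda)-\tfrac{(\lambda/\Lambda(v))^2}{2}\Lambda(v)^2\cdot(\text{something})$; cleaner: apply the argument of Lemma~\ref{lem:4.3} to the function $\lambda\mapsto S_\omega(v^\lambda)-\tfrac{\lambda^2}{2\Lambda(v)^2}\,\Lambda(v)\,h'(\dots)$. To keep this honest I would follow Ohta~\cite{O95ds} verbatim here: since $\Lambda(v)$ is a strict local max of the smooth function $\lambda\mapsto S_\omega(v^\lambda)$ and $1$ lies in $(1-\delta_1,1+\delta_1)$, a second-order Taylor expansion at $\Lambda(v)$ gives
\[
S_\omega(v^{\Lambda(v)})=S_\omega(v)+\int_{\Lambda(v)}^{1}(\lambda-1)\,\del_\lambda^2 S_\omega(v^\lambda)\,d\lambda\ \ \text{(rearranged)},
\]
and combining $S_\omega(v^{\Lambda(v)})\ge \mu(\omega)$ (which holds because $v^{\Lambda(v)}$ can be further rescaled onto $\scK_\omega$ by the monotonicity-of-$K_\omega$-in-$\lambda$ that \emph{does} hold in the defocusing-dominated regime, or by Lemma~\ref{lem:2.6}/\eqref{eq:2.5}) with $\del_\lambda S_\omega(v^\lambda)|_{\lambda=\Lambda(v)}=0$ yields
\[
\mu(\omega)\le S_\omega(v^{\Lambda(v)})\le S_\omega(v)+\big(\Lambda(v)-1\big)\,\del_\lambda S_\omega(v^\lambda)\big|_{\lambda=1}=S_\omega(v)+\big(\Lambda(v)-1\big)P(v),
\]
using $\del_\lambda S_\omega(v^\lambda)|_{\lambda=1}=P(v)$ from \eqref{eq:4.4} and concavity of $\lambda\mapsto S_\omega(v^\lambda)$ on the interval (so the chord lies below the tangent at $\lambda=1$). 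This is exactly the claimed inequality.

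\textbf{Main obstacle.} The delicate point is establishing $S_\omega(v^{\Lambda(v)})\ge\mu(\omega)$, i.e.\ that the rescaled function (which need not sit on $\scK_\omega$ when $q<1+4/N$, since $\lambda\mapsto K_\omega(v^\lambda)$ can stay positive) still has action at least $\mu(\omega)$; this requires showing $K_\omega(v^{\Lambda(v)})\le 0$ for $v\in\scN_{\eps_1}$ and invoking \eqref{eq:2.5} (Lemma~\ref{lem:2.6}). That sign is guaranteed near $\phi_\omega$ because $K_\omega(\phi_\omega^{\Lambda(\phi_\omega)})=K_\omega(\phi_\omega)=0$ and $\langle K_\omega'(\phi_\omega),\phi_\omega\rangle<0$, so by continuity $K_\omega(v^{\Lambda(v)})\le 0$ after shrinking $\eps_1$ — but one must be careful that this shrinking is compatible with the implicit-function neighborhood; I would do all the shrinking at the end. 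The other routine-but-fiddly point is the joint continuity/smoothness needed for the implicit function theorem, which reduces to continuity of $v\mapsto(\|\nabla v\|_{L^2}^2,\|v\|_{L^{p+1}}^{p+1},\|v\|_{L^{q+1}}^{q+1})$ on $H^1(\R^N)$ via Sobolev embedding.
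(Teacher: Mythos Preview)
Your concavity/Taylor ingredient is exactly right, and it matches the paper: once one knows $\del_\lambda^2 S_\omega(v^\lambda)<0$ for all $(\lambda,v)\in(1-\delta_1,1+\delta_1)\times\scN_{\eps_1}$, the tangent-line inequality
\[
S_\omega(v^\lambda)\le S_\omega(v)+(\lambda-1)P(v)
\]
holds throughout that range, and one only needs a $\Lambda(v)$ in the interval with $S_\omega(v^{\Lambda(v)})\ge\mu(\omega)$.

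The gap is in how you produce $\Lambda(v)$. You apply the implicit function theorem to $\del_\lambda S_\omega(v^\lambda)=0$, i.e.\ to $P(v^\lambda)=0$. That gives the local maximizer of $\lambda\mapsto S_\omega(v^\lambda)$, but it does \emph{not} place $v^{\Lambda(v)}$ on $\scK_\omega$, so the bound $\mu(\omega)\le S_\omega(v^{\Lambda(v)})$ is not available from the definition of $\mu(\omega)$ or from Lemma~\ref{lem:2.6}. Your attempted patch, ``by continuity $K_\omega(v^{\Lambda(v)})\le 0$'', does not work: at $v=\phi_\omega$ one has $K_\omega(\phi_\omega^{\Lambda(\phi_\omega)})=K_\omega(\phi_\omega)=0$, not a strict inequality, so continuity gives no sign information for perturbed $v$. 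The derivative $\bigl.\del_\lambda K_\omega(\phi_\omega^\lambda)\bigr|_{\lambda=1}<0$ is transversality in $\lambda$, not in $v$, and says nothing about how $K_\omega(v^{\Lambda(v)})$ moves as $v$ varies.

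The paper sidesteps this entirely by applying the implicit function theorem to $K_\omega(v^\lambda)=0$ instead (using precisely the computation you already recorded, $K_\omega(\phi_\omega)=0$ and $\bigl.\del_\lambda K_\omega(\phi_\omega^\lambda)\bigr|_{\lambda=1}<0$). Then $v^{\Lambda(v)}\in\scK_\omega$ by construction, so $\mu(\omega)\le S_\omega(v^{\Lambda(v)})$ is immediate from the definition of $\mu(\omega)$, and the concavity inequality at $\lambda=\Lambda(v)$ finishes. In short: keep your Taylor/concavity step, but apply the implicit function theorem to $K_\omega$ rather than to $P$; this removes the ``main obstacle'' you identified rather than trying to overcome it.
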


\begin{proof}
Since $\del_\lambda^2S_\omega(\phi_\omega^\lambda)|_{\lambda=1}<0$,
by the continuity of the function 
\begin{align*}
(\lambda,v)\mapsto \del_\lambda^2S_\omega(v^\lambda),
\end{align*}
there exist $\eps_1,\delta_1\in(0,1)$ such that $\del_\lambda^2S_\omega(v^\lambda)<0$ for any $\lambda\in(1-\delta_1,1+\delta_1)$ and $v\in\scN_{\eps_1}$.
Moreover, by \eqref{eq:4.4} and Taylor's expansion, we have 
\begin{equation}\label{eq:5.2}
S_\omega(v^\lambda)
\le S_\omega(v)
+(\lambda-1)P(v)
\end{equation}
for any $\lambda\in(1-\delta_1,1+\delta_1)$ and $v\in\scN_{\eps_1}$.

Here we note that $K_\omega(\phi_\omega)=0$ and
\begin{align*}
\bigl.\del_\lambda K_\omega(\phi_\omega^\lambda)\bigr|_{\lambda=1}
&=2\|\nabla\phi_\omega\|_{L^2}^2
+\alpha\|\phi_\omega\|_{L^{p+1}}^{p+1}
-\beta\|\phi_\omega\|_{L^{q+1}}^{q+1} \\
&=-(p-1)\|\nabla\phi_\omega\|_{L^2}^2
-\frac{\beta(q-p)}{q+1}\|\phi_\omega\|_{L^{q+1}}^{q+1} \\
&<0,
\end{align*}
where we used $P(\phi_\omega)=0$ in the second equality.
By the implicit function theorem,
taking $\eps_1$ and $\delta_1$ smaller if necessary,
for any $v\in\scN_{\eps_1}$ there exists $\Lambda(v)\in (1-\delta_1,1+\delta_1)$ such that $\Lambda(\phi_\omega)=1$ and $K_\omega(v^{\Lambda(v)})=0$.
Therefore, by the definition of $\mu(\omega)$ and \eqref{eq:5.2}, we obtain
\[
\mu(\omega)
\le S_\omega(v^{\Lambda(v)})
\le S_\omega(v)
+\bigl(\Lambda(v)-1\bigr)P(v).
\]
This completes the proof.
\end{proof}

For the solution $u(t)$ of \eqref{NLS} with $u_0\in\scN_\eps$,
we define the exit time from the tube $\scN_\eps$ by
\[
T_\eps^\pm(u_0)
:=\inf\{t>0:
u(\pm t)\notin\scN_\eps\}.
\]
We set $I_\eps(u_0):=(-T_\eps^-(u_0),T_\eps^+(u_0))$.

\begin{lemma}\label{lem:5.3}
Assume \eqref{eq:5.1}.
For any $u_0\in\scB_{\omega}\cap\scN_{\eps_1}$, where $\scB_\omega$ is the set defined in \eqref{eq:4.5}, there exists $m=m(u_0)>0$ such that $P(u(t))\le -m$ for all $t\in I_{\eps_1}(u_0)$.
\end{lemma}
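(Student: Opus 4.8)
The plan is to read off the conclusion from the flow‑invariance of $\scB_\omega$ (Lemma~\ref{lem:4.4}), the conservation of $S_\omega$, and the variational inequality furnished by Lemma~\ref{lem:5.2}. First I would record that, since $u_0\in\scB_\omega$, Lemma~\ref{lem:4.4} gives $u(t)\in\scB_{\omega}$ for all $t\in I_{\max}$, hence in particular for all $t\in I_{\eps_1}(u_0)$; thus $P(u(t))<0$ and, by conservation of the action, $S_\omega(u(t))=S_\omega(u_0)<\mu(\omega)$ on $I_{\eps_1}(u_0)$. I would then take $m:=\mu(\omega)-S_\omega(u_0)>0$ as the candidate constant.

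Next, for a fixed $t\in I_{\eps_1}(u_0)$ we have $u(t)\in\scN_{\eps_1}$, so Lemma~\ref{lem:5.2} applied to $v=u(t)$ yields $\Lambda(u(t))\in(1-\delta_1,1+\delta_1)$ with
\[
\mu(\omega)\le S_\omega(u(t))+\bigl(\Lambda(u(t))-1\bigr)P(u(t))
=S_\omega(u_0)+\bigl(\Lambda(u(t))-1\bigr)P(u(t)).
\]
Rearranging gives $\bigl(\Lambda(u(t))-1\bigr)P(u(t))\ge m>0$; since $P(u(t))<0$, this forces $\Lambda(u(t))<1$ and $|\Lambda(u(t))-1|>0$, so dividing and using $|\Lambda(u(t))-1|<\delta_1<1$ we obtain $|P(u(t))|\ge m/|\Lambda(u(t))-1|>m$. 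Because $P(u(t))<0$, this is precisely $P(u(t))\le -m$, and $t\in I_{\eps_1}(u_0)$ was arbitrary.

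I do not expect any real obstacle: the statement is essentially a bookkeeping corollary of the two preceding lemmas. The only point deserving a moment's attention is the uniformity of $m$ in $t$, which works precisely because conservation of $S_\omega$ makes the gap $\mu(\omega)-S_\omega(u(t))$ independent of $t$, while Lemma~\ref{lem:5.2} bounds $|\Lambda(u(t))-1|$ by $\delta_1<1$ uniformly over $\scN_{\eps_1}$.
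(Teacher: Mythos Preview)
Your argument has a genuine gap: you invoke Lemma~\ref{lem:4.4} to obtain flow-invariance of $\scB_\omega$, but that lemma is stated and proved in Section~\ref{sec:4} under the standing hypothesis $q\ge 1+4/N$, whereas Lemma~\ref{lem:5.3} lives in Section~\ref{sec:5}, where $q<1+4/N$. The proof of Lemma~\ref{lem:4.4} rests on Lemma~\ref{lem:4.3}, which in turn uses Lemma~\ref{lem:4.2}; when $\beta=N(q-1)/2<2$, the map $\lambda\mapsto K_\omega(v^\lambda)$ tends to $+\infty$ as $\lambda\to\infty$ (the $\lambda^2$ gradient term dominates), so the existence of $\lambda_0$ with $K_\omega(v^{\lambda_0})=0$ is no longer automatic, and the graph-shape argument for $f(\lambda)=S_\omega(v^\lambda)-\tfrac{\lambda^2}{2}P(v)$ in Lemma~\ref{lem:4.3} likewise breaks down (now $f(\lambda)\to+\infty$). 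Thus global invariance of $\scB_\omega$ on $I_{\max}$ is not available here, and you cannot read off $P(u(t))<0$ from it.

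The paper's argument avoids this entirely. It does not assert $P(u(t))<0$ a priori; instead, for $t\in I_{\eps_1}(u_0)$ one has $u(t)\in\scN_{\eps_1}$, so Lemma~\ref{lem:5.2} together with conservation gives
\[
\bigl(\Lambda(u(t))-1\bigr)P(u(t))\ \ge\ \mu(\omega)-S_\omega(u_0)\ >\ 0,
\]
whence $P(u(t))\ne0$. Continuity of $t\mapsto P(u(t))$ on the connected interval $I_{\eps_1}(u_0)$, combined with $P(u_0)<0$, then forces $P(u(t))<0$ (and hence $\Lambda(u(t))<1$) throughout. After this your division step is correct, and one may take either $m=(\mu(\omega)-S_\omega(u_0))/\delta_1$ as the paper does, or your weaker $m=\mu(\omega)-S_\omega(u_0)$. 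Replacing your appeal to Lemma~\ref{lem:4.4} by this continuity argument repairs the proof.
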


\begin{proof}
For $t\in I_{\eps_1}(u_0)$,
since $u(t)\in\scN_{\eps_1}$,
it follows from Lemma~\ref{lem:5.2} that
\[
\mu(\omega)-S_\omega(u_0)
=\mu(\omega)-S_\omega(u(t))
\le-\{1-\Lambda(u(t))\}P(u(t)).
\]
In particular,  
since $\mu(\omega)>S_\omega(u_0)$ by $u_0\in\scB_{\omega}$, 
we have $P(u(t))\ne0$.
By the continuity of the flow and $P(u_0)<0$ we obtain
\[
P(u(t))<0,\quad
1-\Lambda(u(t))>0.
\]
Therefore, we obtain
\[
-P(u(t))
\ge\frac{\mu(\omega)-S_\omega(u_0)}{1-\Lambda(u(t))}
\ge\frac{\mu(\omega)-S_\omega(u_0)}{\delta_1}
=:m(u_0)>0.
\]
This completes the proof.
\end{proof}

\begin{lemma}\label{lem:5.4}
Assume \eqref{eq:5.1}.
Then $|I_{\eps_1}(u_0)|<\infty$ for all $u_0\in\scB_{\omega}\cap\scN_{\eps_1}\cap\Sigma$.
\end{lemma}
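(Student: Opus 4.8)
The plan is to combine the uniform negativity of the virial functional supplied by Lemma~\ref{lem:5.3} with the virial identity~\eqref{eq:4.1}, and then invoke the classical convexity obstruction. Put $g(t):=\|xu(t)\|_{L^2}^2$. Since $u_0\in\Sigma$, the solution $u$ remains in $\Sigma$ throughout its lifespan, $g\in C^2(I_{\max})$, and $g''(t)=8P(u(t))$ for all $t\in I_{\max}$. Because $I_{\eps_1}(u_0)\subset I_{\max}$ and $u(t)\in\scN_{\eps_1}$ for every $t\in I_{\eps_1}(u_0)$, Lemma~\ref{lem:5.3} provides $m=m(u_0)>0$ with
\[
g''(t)=8P(u(t))\le -8m<0\qquad\text{for every }t\in I_{\eps_1}(u_0).
\]

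Next I would argue by contradiction: suppose $|I_{\eps_1}(u_0)|=\infty$. Since $I_{\eps_1}(u_0)=(-T_{\eps_1}^{-}(u_0),T_{\eps_1}^{+}(u_0))$, at least one of the exit times is infinite, and by the symmetry $t\mapsto -t$ we may assume $T_{\eps_1}^{+}(u_0)=\infty$; then $[0,\infty)\subset I_{\max}$ and $g''\le -8m$ on $[0,\infty)$. Integrating twice from $0$ yields $g(t)\le g(0)+g'(0)\,t-4m\,t^2$ for all $t\ge0$, whose right-hand side tends to $-\infty$; hence $g(t)<0$ for large $t$, contradicting $g\ge0$. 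Therefore $|I_{\eps_1}(u_0)|<\infty$, as claimed.

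I do not anticipate any genuine difficulty in this lemma: the content is routine once Lemma~\ref{lem:5.3} is in hand. The only point requiring a word of care is that the virial identity is indeed available on all of $I_{\eps_1}(u_0)$, which is clear from $I_{\eps_1}(u_0)\subset I_{\max}$ by the very definition of the exit times (and if $I_{\max}$ happened to be a bounded interval or a half-line, the contradiction above would simply be reached on the bounded side). The real work behind Theorem~\ref{thm:1.12} lies upstream of this lemma — in the variational inequality of Lemma~\ref{lem:5.2}, in the trapping estimate of Lemma~\ref{lem:5.3}, and, crucially, in verifying the sign condition \eqref{eq:5.1} for $\omega=0$ via the explicit equivalence \eqref{eq:1.18} and then transferring it to small $\omega>0$ through the convergence $\phi_\omega\to\phi_0$ of Theorem~\ref{thm:1.6}.
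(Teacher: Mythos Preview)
Your proof is correct and follows exactly the same route as the paper: apply the virial identity \eqref{eq:4.1} together with the uniform bound $P(u(t))\le -m$ from Lemma~\ref{lem:5.3} to obtain $g''(t)\le -8m$ on $I_{\eps_1}(u_0)$, and conclude by the standard convexity obstruction. The paper's proof is simply more terse, omitting the explicit integration and contradiction that you spell out.
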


\begin{proof}
Let $u(t)$ be the solution of \eqref{NLS} with $u_0\in\scB_{\omega}\cap\scN_{\eps_1}\cap\Sigma$.
By the virial identity~\eqref{eq:4.1} and Lemma~\ref{lem:5.3},
we obtain
\[
\frac{d^2}{dt^2}\|xu(t)\|_{L^2}^2
=8P\big(u(t)\big)
\le -8m
\]
for all $t\in I_{\eps_1}(u_0)$,
which implies $|I_{\eps_1}(u_0)|<\infty$.
This completes the proof.
\end{proof}

\begin{proof}[Proof of Proposition~\ref{prop:5.1}]
First, we claim that $\phi_\omega^\lambda\in\scB_{\omega}\cap\scN_{\eps_1}$ for all $\lambda >1$ close to $1$.
Since $\bigl.\del_\lambda S_\omega(\phi_\omega^\lambda)\bigr|_{\lambda=1}=0$ and 
$\bigl.\del_\lambda^2 S_\omega(\phi_\omega^\lambda)\bigr|_{\lambda=1}<0$,
we see that there exists $\lambda_1>1$ such that $\del_\lambda S_\omega(\phi_\omega^\lambda)<0$ and $S_\omega(\phi_\omega^\lambda)<\mu(\omega)$ for all $\lambda\in(1,\lambda_1)$.
We also see that $P(\phi_\omega^\lambda)=\lambda\del_\lambda S_\omega(\phi_\omega^\lambda)<0$ for all $\lambda\in(1,\lambda_1)$.
Moreover, taking $\lambda_1$ smaller if necessary, we have $\phi_\omega^\lambda\in\scN_{\eps_1}$ for all $\lambda\in(1,\lambda_1)$.

We now prove the conclusion.
Let $\chi_R\in C_\mathrm{c}^\infty(\R^N)$ be the cut off function defined in \eqref{eq:4.7}.
It follows from the similar argument as in the proof of Lemma~\ref{lem:4.5} that there exists $R\colon(1,\infty)\to(0,\infty)$ such that
$\chi_{R(\lambda)}\phi_\omega^\lambda\to\phi_\omega$ in $H^1(\R^N)$ as $\lambda\downarrow1$ and
$\chi_{R(\lambda)}\phi_\omega^\lambda\in\scB_{\omega}\cap\scN_{\eps_1}\cap\Sigma$ for all $\lambda\in(1,\lambda_1)$.
By applying Lemma~\ref{lem:5.4} we have $|I_{\eps_1}(\chi_{R(\lambda)}\phi_\omega^\lambda)|<\infty$ for all $\lambda\in(1,\lambda_1)$.
Hence, 
the standing wave $e^{i\omega t}\phi_\omega$ is unstable. This completes the proof.
\end{proof}


\subsection{Instability for small frequencies}
\label{sec:5.2}

We now complete the proof of Theorem \ref{thm:1.12}. The following claim is a key lemma for the proof.

\begin{lemma}\label{lem:5.5}
Let $1<p<q<1+4/N$.
Then the condition~\eqref{eq:5.1} for $\omega=0$ is equivalent to 
\begin{align}
\label{eq:5.3}
\gamma_N(p)=\frac{16+N^2+6N-pN(N+2)}{N\bigl(N+2-(N-2)p\bigr)}<q.
\end{align}
\end{lemma}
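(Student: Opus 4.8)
The plan is to turn $\del_\lambda^2 S_0(\phi^\lambda_0)|_{\lambda=1}<0$ into an explicit algebraic inequality in $(p,q,N)$ by using the two scaling identities available for the ground state, and then to factor the resulting quadratic in $q$.

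First I would set $a:=\|\nabla\phi_0\|_{L^2}^2$, $b:=\|\phi_0\|_{L^{p+1}}^{p+1}$, $c:=\|\phi_0\|_{L^{q+1}}^{q+1}$ (all positive, and all finite on $\dot H^1\cap L^{p+1}$ by the Gagliardo--Nirenberg inequality \eqref{GN}), together with $\alpha:=N(p-1)/2$ and $\beta:=N(q-1)/2$. Then $g(\lambda):=S_0(\phi_0^\lambda)=\tfrac a2\lambda^2+\tfrac{b}{p+1}\lambda^{\alpha}-\tfrac{c}{q+1}\lambda^{\beta}$ is smooth on $(0,\infty)$, and
\[
g''(1)=a+\frac{\alpha(\alpha-1)}{p+1}\,b-\frac{\beta(\beta-1)}{q+1}\,c .
\]
Next I would invoke the two relations that $\phi_0$ satisfies: the Nehari identity $K_0(\phi_0)=0$, that is $a+b-c=0$, and the Pohozaev identity $P(\phi_0)=g'(1)=0$, that is $a+\tfrac{\alpha}{p+1}b-\tfrac{\beta}{q+1}c=0$ (the latter used already in the proof of Lemma~\ref{lem:5.2}). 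Because $a,b,c>0$, these two linear equations determine the ratios among $a,b,c$; writing $m:=N+2-(N-2)p$ and $n:=N+2-(N-2)q$ --- both positive, since $p<q<1+4/N\le 2^*-1$ --- one solves $\tfrac{b}{p+1}=\tfrac nm\cdot\tfrac{c}{q+1}$ and $a=\tfrac{c}{q+1}\cdot\tfrac{\beta m-\alpha n}{m}$. Substituting these into the displayed formula and simplifying collapses it to
\[
g''(1)=\frac{c}{(q+1)\,m}\Bigl(\alpha(\alpha-2)\,n-\beta(\beta-2)\,m\Bigr),
\]
so that, the prefactor being strictly positive, $g''(1)<0\iff \alpha(\alpha-2)\,n<\beta(\beta-2)\,m$.

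To finish I would make this last inequality explicit. Using $\alpha(\alpha-2)=-\tfrac{N^2}{4}(p-1)\bigl(1+\tfrac4N-p\bigr)$ and the analogous identity for $\beta$, the inequality $\alpha(\alpha-2)n<\beta(\beta-2)m$ becomes $F(q)<0$, where
\[
F(q):=(q-1)\Bigl(1+\tfrac4N-q\Bigr)m-(p-1)\Bigl(1+\tfrac4N-p\Bigr)n
\]
is a quadratic in $q$ with negative leading coefficient $-m$. The decisive observation is that $F$ is antisymmetric under the exchange $p\leftrightarrow q$, hence $F$ vanishes at $q=p$; determining the second root by Vieta (its product with $p$ equals $\bigl(1+\tfrac4N\bigr)+\tfrac{(p-1)(1+4/N-p)(N+2)}{m}$) and a short simplification identifies that root as exactly $\gamma_N(p)$. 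Thus $F(q)=-m\,(q-p)\bigl(q-\gamma_N(p)\bigr)$, and since $q>p$ and $m>0$ we get $F(q)<0\iff q>\gamma_N(p)$, i.e. \eqref{eq:5.3}.

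The main obstacle is purely computational: solving the $2\times2$ linear system for $a,b,c$ and reducing $g''(1)$ to the clean form above, and then the Vieta computation showing the nontrivial root of $F$ equals $\gamma_N(p)$. Both are elementary but demand care with the exponents $\alpha,\beta$ and with the quantities $m,n$; a subsidiary point to record is the positivity of $m$ and $n$, which is where the hypothesis $q<1+4/N$ (together with $1+4/N<2^*-1$ when $N\ge3$) enters.
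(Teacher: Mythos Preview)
Your proposal is correct and follows the same strategy as the paper: compute $\partial_\lambda^2 S_0(\phi_0^\lambda)|_{\lambda=1}$, eliminate two of the three norms using $K_0(\phi_0)=0$ and $P(\phi_0)=0$, and reduce to an algebraic condition in $(p,q,N)$. The paper stops at the substituted expression \eqref{eq:5.6} and dismisses the rest as ``elementary calculations''; you actually carry them out, arriving at the clean formula $g''(1)=\tfrac{c}{(q+1)m}\bigl(\alpha(\alpha-2)n-\beta(\beta-2)m\bigr)$ and then factoring the resulting quadratic $F(q)$ via the nice observation that $F$ is antisymmetric under $p\leftrightarrow q$, which immediately gives $q=p$ as one root and isolates $\gamma_N(p)$ as the other.
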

\begin{proof}


First we note that
\begin{align}
\label{eq:5.4}
S_0(\phi_0^{\lambda})=\frac{\lambda^2}{2}\|\nabla\phi_0\|_{L^2}^2
+\frac{\lambda^{\alpha}}{p+1}\|\phi_0\|_{L^{p+1}}^{p+1}-\frac{\lambda^{\beta}}{q+1}\| \phi_0\|_{L^{q+1}}^{q+1},
\end{align}
where $\alpha$ and $\beta$ are defined by \eqref{eq:4.3}.
From $1<p<q<1+4/N$, we have $0<\alpha <\beta <2$.
If we differentiate \eqref{eq:5.4} with respect to $\lambda$ twice, we have
\begin{align}
\label{eq:5.5}
\bigl.\del_{\lambda}^2S_0(\phi_0^{\lambda})\bigr|_{\lambda=1}
=\|\nabla\phi_0\|_{L^2}^2
+\frac{\alpha(\alpha -1)}{p+1}\|\phi_0\|_{L^{p+1}}^{p+1}-\frac{\beta(\beta-1)}{q+1}\| \phi_0\|_{L^{q+1}}^{q+1}.
\end{align}
Here we use Pohozaev's identities. We note that 
\begin{align*}
K_0(\phi_0)&=\|\nabla\phi_0\|_{L^2}^2+\| \phi_0\|_{L^{p+1}}^{p+1}-\| \phi_0\|_{L^{q+1}}^{q+1}=0,\\
P(\phi_0)&=\|\nabla\phi_0\|_{L^2}^2+\frac{\alpha}{p+1}\| \phi_0\|_{L^{p+1}}^{p+1}-\frac{\beta}{q+1}\| \phi_0\|_{L^{q+1}}^{q+1}=0.
\end{align*}
From these two relations, we have
\begin{align*}
\l( 1-\frac{\beta}{q+1}\r) \| \nabla\phi_0\|_{L^2}^2&= \l( \frac{\beta}{q+1}-\frac{\alpha}{p+1} \r)\| \phi_0\|_{L^{p+1}}^{p+1},\\
\l( 1-\frac{\alpha}{p+1}\r) \| \nabla\phi_0\|_{L^2}^2&= \l( \frac{\beta}{q+1}-\frac{\alpha}{p+1} \r)\| \phi_0\|_{L^{q+1}}^{q+1}.
\end{align*}
By substituting these formulae into \eqref{eq:5.5}, we have
\begin{align}
\label{eq:5.6}
\bigl.\del_{\lambda}^2S_0(\phi_0^{\lambda})\bigr|_{\lambda=1}
=\|\nabla\phi_0\|_{L^2}^2\biggl[&1+\frac{\alpha(\alpha -1)}{p+1}
\l( \frac{\beta}{q+1}-\frac{\alpha}{p+1} \r)^{-1}\l( 1-\frac{\beta}{q+1}\r)\\
&-\frac{\beta(\beta -1)}{q+1}
\l( \frac{\beta}{q+1}-\frac{\alpha}{p+1} \r)^{-1}\l( 1-\frac{\alpha}{p+1}\r) \biggr].
\notag
\end{align}
From this expression and elementary calculations, we see that $\bigl.\del_{\lambda}^2S_0(\phi_0^{\lambda})\bigr|_{\lambda=1}<0$ is equivalent to \eqref{eq:5.3}.
This completes the proof.
\end{proof}
\begin{proof}[Proof of Theorem~\ref{thm:1.12}]
By Theorem~\ref{thm:1.6} we see that $\omega\mapsto\bigl.\del_\lambda^2S_\omega(\phi_\omega^\lambda)\bigr|_{\lambda=1}$ is continuous at $\omega=0$. Since $\bigl.\del_\lambda^2S_0(\phi_0^\lambda)\bigr|_{\lambda=1}<0$ by the assumption \eqref{eq:1.12} and Lemma~\ref{lem:5.5}, there exists $\omega_0>0$ such that $\bigl.\del_\lambda^2S_\omega(\phi_\omega^\lambda)\bigr|_{\lambda=1}<0$ for all $\omega\in[0,\omega_0]$. Hence we deduce by Proposition~\ref{prop:5.1} that the standing wave $e^{i\omega t}\phi_\omega$ is unstable for all $\omega\in[0,\omega_0]$. This completes the proof.
\end{proof}


\appendix
\section{Radial compactness lemma}
\label{sec:A}
We establish the radial compactness lemma in a little more general setting than the the original one by Strauss \cite{S77}.
\begin{proposition}
\label{prop:A.1}
Let $1\leq p<2^*-1$ and let $\{ u_n\}\subset\dot{H}^1(\R^N)\cap L^{p+1}(\R^N)$ be a bounded sequence of radial functions. If $N\geq 2$ or if $u_n(x)$ is a nonincreasing function of $|x|$ for every $n\in\N$, then there exist a subsequence $\{ u_{n_j}\}$ and $u\in\dot{H}^1(\R^N)\cap L^{p+1}(\R^N)$ such that 
$u_{n_j}\to u$ in $L^r(\R^N)$ as $j\to\infty$ for every $r\in(p+1,2^*)$.
\end{proposition}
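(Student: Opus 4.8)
The plan is to follow Strauss's original argument \cite{S77}, the one substantive modification being that the pointwise radial decay estimate must be extracted from control of $\|\nabla u\|_{L^2}$ and $\|u\|_{L^{p+1}}$ alone, since the $L^2$-norm appearing in the classical estimate is not a finite quantity in the present function space. First I would prove that there are constants $\alpha=\alpha(p,N)>0$ and $C=C(p,N)>0$ such that every radial $u\in\dot H^1(\R^N)\cap L^{p+1}(\R^N)$ --- assumed in addition to be nonincreasing in $|x|$ when $N=1$ --- satisfies $|u(x)|\le C|x|^{-\alpha}\|\nabla u\|_{L^2}^{2/(p+3)}\|u\|_{L^{p+1}}^{(p+1)/(p+3)}$ for all $x\ne0$. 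For $N\ge2$, writing $u(r):=u(|x|)$ (which is locally absolutely continuous on $(0,\infty)$ and tends to $0$ at infinity) and setting $a:=\frac{p+3}{2}$, one has
\[
|u(r)|^{a}=-a\int_r^\infty |u(s)|^{a-1}\sgn(u(s))\,u'(s)\,ds\le a\,r^{-(N-1)}\int_r^\infty |u(s)|^{\frac{p+1}{2}}|u'(s)|\,s^{N-1}\,ds,
\]
and the Cauchy--Schwarz inequality with respect to the measure $s^{N-1}\,ds$ bounds the last integral by $C\|u\|_{L^{p+1}}^{(p+1)/2}\|\nabla u\|_{L^2}$; this yields $\alpha=\frac{2(N-1)}{p+3}>0$. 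When $N=1$, the monotonicity of $|u|$ gives $2r\,|u(r)|^{p+1}\le\|u\|_{L^{p+1}}^{p+1}$ directly, so $\alpha=\frac{1}{p+1}>0$. Applied to the terms of our bounded sequence, this produces a decay bound $|u_n(x)|\le C'|x|^{-\alpha}$ uniform in $n$.

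Next I would use local compactness. On a fixed ball $B_R$, since $p+1>2$, H\"older's inequality gives $\|u_n\|_{L^2(B_R)}\le|B_R|^{\frac12-\frac1{p+1}}\|u_n\|_{L^{p+1}}$, so $\{u_n\}$ is bounded in $H^1(B_R)$; by the Rellich--Kondrachov theorem it has a subsequence converging strongly in $L^r(B_R)$ for every $r\in[1,2^*)$. Running this over $R=1,2,3,\dots$ and extracting a diagonal subsequence $\{u_{n_j}\}$, I obtain $u\in L^r_{\mathrm{loc}}(\R^N)$ with $u_{n_j}\to u$ in $L^r(B_R)$ for every $R$. Boundedness of $\{u_{n_j}\}$ in $\dot H^1\cap L^{p+1}$ together with weak lower semicontinuity of these norms then shows $u\in\dot H^1(\R^N)\cap L^{p+1}(\R^N)$.

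Finally, fix $r\in(p+1,2^*)$. For $|x|\ge R\ge1$, the decay bound of the first step gives $|u_{n_j}(x)|^r\le\bigl(C'R^{-\alpha}\bigr)^{r-(p+1)}|u_{n_j}(x)|^{p+1}$, whence
\[
\int_{|x|\ge R}|u_{n_j}|^r\,dx\le (C'R^{-\alpha})^{r-(p+1)}\|u_{n_j}\|_{L^{p+1}}^{p+1}\le C'' R^{-\alpha(r-(p+1))}
\]
uniformly in $j$, and the same bound holds for $u$ by Fatou's lemma. Given $\eps>0$, choosing $R$ so large that both tails are less than $\eps^r$ and then $j$ so large that $\|u_{n_j}-u\|_{L^r(B_R)}<\eps$, and splitting $\|u_{n_j}-u\|_{L^r(\R^N)}^r$ over $B_R$ and its complement, shows $\|u_{n_j}-u\|_{L^r(\R^N)}\to0$, which is the assertion.

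The only genuinely non-routine step is the first one: the classical Strauss estimate leans on $\|u\|_{L^2}$, so the decay has to be rebuilt purely from the homogeneous norms, and the case $N=1$ really does require the monotonicity hypothesis, since even functions in $\dot H^1(\R)\cap L^{p+1}(\R)$ need not decay at all. Once the uniform decay is in hand, the remaining two steps are the standard diagonal-extraction and $\eps/3$ tail argument.
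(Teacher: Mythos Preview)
Your proof is correct and follows essentially the same route as the paper: the same modified Strauss decay estimate (Lemma~\ref{lem:A.2}) for $N\ge2$ and the elementary monotonicity bound for the nonincreasing case, followed by Rellich on balls and a uniform tail estimate. The only organizational difference is that the paper first extracts a weak limit in $\dot H^1\cap L^{p+1}$ and then applies Rellich on a single ball $B_R$ (the weak limit identifying the strong local limit without diagonal extraction), whereas you run a diagonal argument over balls; both work equally well.
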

For the proof of Proposition \ref{prop:A.1}, the following lemma is important. 
\begin{lemma}
\label{lem:A.2}
If $u\in \dot{H}^1(\R^N)\cap L^{p+1}(\R^N)$ is a radial function, then
\begin{align}
\label{eq:A.1}
\sup_{x\in\R^N}|x|^{\frac{2}{p+3}(N-1)}|u(x)| 
\leq \| u\|_{L^{p+1}}^{\frac{p+1}{p+3}} \|\nabla u\|_{L^2}^{\frac{2}{p+3}}.
\end{align}
\end{lemma}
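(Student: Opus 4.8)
\textbf{Proof proposal for Lemma~\ref{lem:A.2}.} The plan is to prove \eqref{eq:A.1} by the classical Strauss radial argument. I would first establish the estimate for $u$ smooth, compactly supported, and radial --- this is the heart of the matter --- and then pass to a general radial $u\in\dot{H}^1(\R^N)\cap L^{p+1}(\R^N)$ by density: the right-hand side of \eqref{eq:A.1} is continuous in the $\dot{H}^1\cap L^{p+1}$ norm, and, once \eqref{eq:A.1} is known for smooth data, applying it to the differences of a smooth approximating sequence shows that the sequence converges locally uniformly on each region $\{|x|\ge\delta\}$, so the limit inherits the bound away from the origin (this is where the standing assumption $1\le p<2^*-1$ enters, to guarantee that such approximations exist).

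For smooth compactly supported radial $u$, write $u(x)=u(r)$ with $r=|x|$ and note $\tfrac{p+3}{2}\ge 2$ since $p\ge 1$. Using the fundamental theorem of calculus from infinity, the pointwise bound $\bigl|\tfrac{d}{ds}|u|^{\frac{p+3}{2}}\bigr|\le\tfrac{p+3}{2}|u|^{\frac{p+1}{2}}|u'|$, and $r^{N-1}\le s^{N-1}$ for $s\ge r$,
\[
r^{N-1}\,|u(r)|^{\frac{p+3}{2}}
=-\,r^{N-1}\!\int_r^{\infty}\frac{d}{ds}\,|u(s)|^{\frac{p+3}{2}}\,ds
\le \frac{p+3}{2}\int_r^{\infty}s^{N-1}\,|u(s)|^{\frac{p+1}{2}}\,|u'(s)|\,ds .
\]
Splitting the weight as $s^{N-1}=s^{\frac{N-1}{2}}\cdot s^{\frac{N-1}{2}}$ and applying the Cauchy--Schwarz inequality gives
\[
r^{N-1}\,|u(r)|^{\frac{p+3}{2}}
\le \frac{p+3}{2}\Bigl(\int_0^{\infty}s^{N-1}|u(s)|^{p+1}\,ds\Bigr)^{\!1/2}
\Bigl(\int_0^{\infty}s^{N-1}|u'(s)|^{2}\,ds\Bigr)^{\!1/2}.
\]
Rewriting the two integrals in terms of $\|u\|_{L^{p+1}}$ and $\|\nabla u\|_{L^2}$ (up to the measure of $S^{N-1}$), raising to the power $\tfrac{2}{p+3}$, and taking the supremum over $r>0$ then yields \eqref{eq:A.1} with an explicit constant depending only on $N$ and $p$.

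I expect the delicate point to be the choice of the exponent $\tfrac{p+3}{2}$ in the first display: one must differentiate $|u|^{(p+3)/2}$ rather than $|u|$ itself, precisely so that after the factor $r^{N-1}$ has been distributed the surviving power $|u|^{(p+1)/2}$ pairs by Cauchy--Schwarz with the $L^{p+1}$-density $s^{N-1}\,ds$ and $|u'|$ with the $L^2$-density $s^{N-1}\,ds$; with any other exponent the correct weight $|x|^{\frac{2}{p+3}(N-1)}$ fails to appear. Everything else is routine, the only other point meriting a sentence being the density reduction, since one is working in the homogeneous space $\dot{H}^1\cap L^{p+1}$ rather than in $H^1$.
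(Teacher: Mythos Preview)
Your proof is correct and follows essentially the same route as the paper: reduce to smooth compactly supported radial $u$, write $r^{N-1}|u(r)|^{(p+3)/2}$ via the fundamental theorem of calculus, and apply Cauchy--Schwarz so that the factors match $\|u\|_{L^{p+1}}$ and $\|\nabla u\|_{L^2}$. The only cosmetic difference is in how the weight $s^{N-1}$ enters---the paper differentiates the product $s^{N-1}|u(s)|^{(p+3)/2}$ and discards the favorable term $(N-1)s^{N-2}|u|^{(p+3)/2}$, whereas you pull $r^{N-1}$ outside and bound it by $s^{N-1}$ before integrating; both yield the same estimate (up to the harmless constant you acknowledge).
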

\begin{proof}
Let the exponent $\rho$ to be chosen later. We may assume that $u\in C^{\infty}_c(\R^N,\R)$. Then we have
\begin{align*}
r^{N-1}|u(r)|^{\rho} &=-\int_{r}^{\infty} \frac{d}{ds} ( s^{N-1}|u(s)|^{\rho})\,ds\\
&=-\int_{r}^{\infty}(N-1)s^{N-2}|u(s)|^{\rho}\,ds -\int_{r}^{\infty}s^{N-1}|u(s)|^{\rho -2}u(s)u'(s)\, ds \\
&\le-\int_{r}^{\infty}s^{N-1}|u(s)|^{\rho -2}u(s)u'(s)\, ds \\
&\leq \l( \int_{r}^{\infty}\bigl(s^{\frac{N-1}{2}} |u(s)|^{\rho -1}\bigr)^2 ds\r)^{1/2} 
\l( \int_{r}^{\infty} |s^{\frac{N-1}{2}} u'|^2 ds\r)^{1/2}\\
&\le \| u\|_{L^{2\rho-2}}^{\rho -1}\| \nabla u\|_{L^2}. 
\end{align*}
Here we set $2(\rho -1)=p+1$, which is equivalent that 
$
\rho := \frac{p+3}{2}.
$
Hence we deduce that
\begin{align*}
r^{N-1}|u(r)|^{\frac{p+3}{2}} \le \| u\|_{L^{p+1}}^{\frac{p+1}{2}} \|\nabla u\|_{L^2},
\end{align*}
which yields \eqref{eq:A.1}. 
\end{proof}

\begin{proof}[Proof of Proposition \ref{prop:A.1}]
If $u(x)$ is a nonincreasing function of $|x|$, we have
\begin{align*}
\| u\|_{L^{p+1}}^{p+1}\geq \int_{|x|<R} |u(x)|^{p+1}dx\geq | B_R |\, |u(R)|^{p+1}, 
\end{align*}
where $B_R:=\{ x\in\R^N :|x|<R\}$.
This yields that
\begin{align*}
\sup_{x\in\R^N}|x|^{\frac{N}{p+1}}|u(x)| \leq C\| u\|_{L^{p+1}}
\end{align*}
for some constant $C$. Hence, from the assumption and Lemma \ref{lem:A.2}, we deduce that $u_n(x)\to 0$ as $|x|\to\infty$ uniformly in $n\in\N$.

From the weak compactness, there exist $u\in \dot{H}^1(\R^N)\cap L^{p+1}(\R^N)$ and a subsequence $\{ u_{n_j}\}$ such that $u_{n_j}\wto u$ in $\dot{H}^1(\R^N)\cap L^{p+1}(\R^N)$ as $j\to\infty$. Fix $\eps >0$ and let $R>0$ to be chosen later. Giving $r$ as in the statement, we have
\begin{align*}
\| u_{n_j}-u \|_{L^r(\R^N)}^r&=\| u_{n_j}-u \|_{L^r(B_R)}^r +\| u_{n_j}-u \|_{L^r(|x|\geq R)}^r\\
&\leq \| u_{n_j}-u \|_{L^r(B_R)}^r 
+\| u_{n_j}-u \|_{L^\infty(|x|\geq R)}^{r-(p+1)}\| u_{n_j}-u\|_{L^{p+1}(\R^N)}^{p+1}.
\end{align*}  
We take $R$ large enough such that
\begin{align*}
\| u_{n_j}-u \|_{L^\infty(|x|\geq R)}^{r-(p+1)}\| u_{n_j}-u\|_{L^{p+1}(\R^N)}^{p+1} 
<\frac{\eps}{2}.
\end{align*}
We note that $R$ does not depend on $j$. Since $\{ u_{n_j}\}$ is bounded in 
$H^1(B_R)$ (due to $L^{p+1}(B_R)\subset L^2(B_R)$), from Rellich's compactness theorem we obtain that $\bigl. u_{n_j}\bigr|_{B_R}\to\bigl. u\bigr|_{B_R}$ in $L^r(B_R)$. Therefore for $j$ large enough we have
\begin{align*}
 \| u_{n_j}-u \|_{L^r(B_R)}^r <\frac{\eps}{2},
\end{align*} 
and so that $\| u_{n_j}-u \|_{L^r(\R^N)}^r<\eps$. This completes the proof.
\end{proof}

\section*{Acknowledgments}
M.H. would like to thank Masaya Maeda for helpful discussion about sharp decay estimates. N.F. was supported by JSPS KAKENHI Grant Number 20K14349.
M.H. was supported by JSPS KAKENHI Grant Number JP19J01504.


\end{document}